\documentclass[11pt,reqno]{amsproc}
\usepackage[margin=1in]{geometry}
\usepackage{amsmath, amsthm, amssymb}
\usepackage{times, esint,stackrel}
\usepackage{enumitem}
\usepackage{color}  
\usepackage[colorlinks=true]{hyperref}
\usepackage[color=yellow]{todonotes}
\usepackage{mathtools}
\usepackage{etoolbox}
\usepackage{mathrsfs}
\usepackage{framed}

\makeatletter
\patchcmd\@thm
  {\let\thm@indent\indent}{\let\thm@indent\noindent}%
  {}{}
\makeatother

\usepackage{etoolbox}
\expandafter\patchcmd\csname\string\proof\endcsname
  {\normalparindent}{0pt }{}{}

\newcommand{\subscript}[2]{$#1 _ #2$}

\newtheorem*{thm*}{Theorem}

\newcommand{\be}{\begin{equation}}
\newcommand{\ee}{\end{equation}}
\newcommand{\bea}{\begin{eqnarray}}
\newcommand{\eea}{\end{eqnarray}}

\newtheorem{thm}{Theorem}
\newtheorem{prop}{Proposition}
\newtheorem{lemma}{Lemma}
\newtheorem{cor}{Corollary}

\theoremstyle{definition}
\newtheorem{rem}{Remark}

\newcommand{\rmd}{{\rm d}}

\newcommand{\ol}[1]{\mkern 1.5mu\overline{\mkern-1.5mu#1\mkern-1.5mu}\mkern 1.5mu}

\def\wc{\rightharpoonup}

\def\Re{\textup{\textbf{Re}}}
\def\Wi{\textup{\textbf{Wi}}}
\def\St{\boldsymbol{\tau}}
\def\alp{\boldsymbol{\alpha}}

\newcommand{\bq}{\begin{equation}}
\newcommand{\eq}{\end{equation}}
\newcommand{\bqa}{\begin{eqnarray*}}
\newcommand{\eqa}{\end{eqnarray*}}

\newcommand{\p}{\partial}

\def\XXint#1#2#3{{\setbox0=\hbox{$#1{#2#3}{\int}$ }
\vcenter{\hbox{$#2#3$ }}\kern-.6\wd0}}

\title[Polymer Drag Reduction]{Boundary conditions and polymeric drag\\reduction for the Navier-Stokes
equations}
\author{Theodore D. Drivas and Joonhyun La}
\address{Department of Mathematics, Princeton University, Princeton, NJ 08544\vspace{-2mm} }

\address{\textbf{Current Address}: Department of Mathematics, Stony Brook University, Stony Brook, NY 11794}
\email{tdrivas@math.princeton.edu}
\email{tdrivas@math.stonybrook.edu}
\address{Department of Mathematics, Princeton University, Princeton, NJ 08544\vspace{-2mm}}
\address{\textbf{Current Address}: Department of Mathematics, Stanford University, Stanford CA 94305}
\email{joonhyun@math.princeton.edu}
\email{joonhyun@stanford.edu}
\date{today}

\date{today}
\begin{document}

\begin{abstract} 
Reducing wall drag in turbulent pipe and channel flows is an issue of great practical importance.
 In engineering applications, end-functionalized polymer chains are often employed as agents to reduce drag.
These are polymers which are floating in the solvent and attach (either by adsorption or through irreversible chemical binding) at one of their chain ends to the substrate (wall). We propose a PDE model to study this setup in the simple setting where the solvent is a viscous
incompressible Navier-Stokes fluid occupying the bulk of a smooth domain $\Omega\subset \mathbb{R}^d$, and the 
wall-grafted polymer is in the so-called mushroom regime (inter-polymer spacing on the order of the typical polymer length).  The microscopic description of the polymer enters into the macroscopic description of the fluid motion
through a dynamical boundary condition on the wall-tangential stress of the fluid, something akin to (but distinct from) a history-dependent slip-length. 
We establish global well-posedness of strong solutions in two-spatial dimensions and prove that the inviscid limit to the strong Euler solution holds with a rate.  Moreover, the wall-friction factor $\langle f\rangle$ and the global energy dissipation $\langle \varepsilon\rangle$ vanish inversely proportional to the Reynolds number $\Re$.  This scaling corresponds to Poiseuille's  law  for the friction factor $\langle f\rangle \sim1/\Re$ for laminar flow and thereby quantifies
drag reduction in our setting.  These results are in stark contrast to those available for physical boundaries without polymer additives modeled by, e.g.,  no-slip conditions, where no such results are generally known even in two-dimensions. 
\end{abstract}
\keywords{wall-bounded flow, inviscid limit, drag reduction, fluid-polymer interaction}
\subjclass[2010]{76F02, 82D60, 35Q30, 35Q31, 35Q35}

\maketitle

\section{Introduction }

 The problem of reducing dissipation and drag in flows confined by solid boundaries
  is a classical one which is of great importance for practical engineering applications.
Remarkably,  in 1948 Toms \cite{T48} discovered that the addition of small amounts of polymer (for example, 5-10 ppm per weight) to a turbulent flow is known to have
a pronounced effect on reducing friction drag \cite{PDRreview,Swhite00}. This phenomenon -- now called \emph{polymer drag reduction} -- 
is widely employed in practice,  has had a long record of success and remains a subject of active research \cite{PRLdrag}.
  However, our theoretical
understanding lags behind and there is not a consensus on which properties of the polymer are most
critical for this behavior.

Drag reduction is most evident in turbulent boundary layers, in which dissolving trace
quantities of long-chain flexible polymers into solution can reduce turbulent friction
losses by up to  80\% relative to solvent alone \cite{Virk}.  Moreover, even when
dissolved in the solvent bulk the boundary effects may be nontrivial since it is known that polymers can
spontaneously adsorb from solution onto surfaces if the interaction between the
polymer and the surface is more favorable than that of the solvent \cite{Abs99,NA03}.
These facts suggest that the essential mechanism for drag reduction occurs near solid boundaries.

To take advantage of this,  so-called \emph{end-functionalized} polymers
are often employed in industrial and technological applications.  These are polymers which are attached at one end to the bounding wall,
with the rest of the polymer being relatively neutral to the substrate (neither attracted nor repelled).  
End-functionalized polymers can occur either from polymer adsorption or be created by an irreversible 
attachment facilitated by chemically binding one end of the polymer to the wall. The latter, known as 
{grafted polymer chains}, has the practical advantage that the polymer does not wash out as a consequence of the flow of the solvent.  On the other hand, adsorption is easier to create from a technical point
of view.  For a detailed discussion of these two situations, see Chapters 4 and 13 of \cite{NA03} and \S VIII of \cite{NA01}.

To model end-functionalized polymers mathematically, we introduce a new boundary condition for the Navier-Stokes equations.   
Our model describes the situation in which the polymer ends are fixed on the wall and do not move with the solvent, applying
either to polymers which are irreversibly grafted or in situations where the adsorption is sufficiently strong. 
We also assume the polymer along the wall is in the so-called ``mushroom regime", i.e. they are spaced sufficiently far for them to interact only weakly.
 The polymer is felt by the fluid through a tangential stress balance, which is at equilibrium.  Specifically,  the stress that the wall bound polymer-fluid layer exerts on the bulk fluid is equal to the the viscous stress the bulk fluid exerts back onto the layer.
The main effect of our new boundary condition is that the presence of polymer allows the fluid to slip along the solid walls with effectively constant (in viscosity) slip-length.
We remark that modeling the effect of polymer by an effective slip length is a well-established idea \cite{BdG92,DHL97} which remains practically very effective \cite{Karniadakis1,Karniadakis2}.  
Our contribution to these ideas is to provide a rational derivation of such an effect from a kinetic theoretic description under a
number of simplifying assumptions.  As a result, we show that -- at the PDE level in the regime we study -- the boundary condition is not the usual Navier
condition but rather it is dynamical and appears as an evolution for the tangential fluid stress along the walls. 
Assuming a bead-spring approximation with Hookean dumbbell potential describes the polymer, our macroscopic system is as follows
\begin{align}\label{NSbintro}
\partial_t u^\nu + u^\nu \cdot \nabla u^\nu  &= - \nabla p^\nu  + \nu \Delta u^\nu  + f_b\quad\quad \ \  \text{in} \quad   \Omega \times (0,T), \\
u^\nu|_{t=0} &= u_0  \qquad\qquad\qquad \quad\quad\quad \ \ \   \text{on} \quad \!\!  \Omega \times \{t=0\},\\ 
\nabla \cdot u^\nu  &= 0  \qquad\qquad\qquad \quad\quad\quad\quad \ \text{in} \quad   \Omega \times [0,T),\\ \label{nonpen}
u^\nu \cdot \hat{n} &= 0  \qquad\qquad\qquad \quad\quad\quad\quad \ \text{on} \quad \!  \partial \Omega \times [0,T),\\ 
\left(\partial_t  +  \frac{4H k_B {\overline{T}}}{R \zeta } \right)\left(2(D(u^\nu)  \hat{n})\cdot \hat\tau_i  + \frac{1}{2R}u^\nu\cdot \hat{\tau}_i\right)&= - \frac{ {k_B \overline{T}} N_P }{\rho \nu R} u^\nu \cdot \hat\tau_i  \quad\quad\ \ \  \ \ \ \  \ \text{on} \quad \!  \partial \Omega \times (0,T),\label{NSfintro}\\
 &\hspace{48mm}  i=1,  \dots, d-1 \nonumber
\end{align}
 where $D(u)= 1/2(\nabla u +( \nabla u)^t)$, $f_b$ is a body force, and for every $x\in \p\Omega$, the vectors $\{ \hat{\tau}_i(x)\}_{i=1}^{d-1}$ form an orthogonal basis of the tangent space of $\partial\Omega$ at $x$.   See \S \ref{nondim} for a non-dimensionalization of these equations. The physical constants appearing in the system \eqref{NSbintro}--\eqref{NSfintro} are
  \begin{itemize}
 \item  $\nu$, the kinematic viscosity of the fluid (solvent),
  \item  $k_B$,  the Boltzmann constant,
   \item $\overline{T}$, the absolute temperature,
 \item  $R$, the characteristic length-scale of the polymer,
 \item  $N_P $, the number density of the grafted polymer carpet on the wall (see Eq. \eqref{rhoL}),
 \item $H$, the (non-dimensional) spring constant of the Hookean polymer,
 \item $\zeta$, the bead friction coefficient,
 \item   $\rho$, the mass density of the fluid (solvent).
 \end{itemize}
See \S 2 along with standard texts \cite{Doi,BA87a,BA87b,Ottinger12} for specifics on these parameters.  We here note only that the Stokes-Einstein relation in three dimensions\footnote{In two-dimensions, a relation of the type \eqref{SE} is not well established, although there has been some recent work in the setting of hard disks \cite{SErel1}. For general spatial dimensions $d\geq 3$, the Stokes-Einstein relation reads \cite{SErel}
\be\nonumber
\zeta= \frac{4d\pi^{d/2}}{(d-1)\Gamma\left(\frac{d}{2}-1\right)} \rho \nu a^{d-2}.
\ee}
describes the relation between $\zeta$ and  $\nu$ via the formula
\begin{align} \label{SE}
\zeta = 6 \pi \rho \nu a,  
\end{align}
where we have introduced
  \begin{itemize}
 \item   $a$, the bead width in the coarse-grained bead-spring polymer description.
 \end{itemize}
We remark that the relation \eqref{SE} is crucial in the study of the vanishing viscosity limit, as we will see in \S \ref{sec:dragred}.

The system  \eqref{NSbintro}--\eqref{NSfintro} and, in particular, the boundary condition \eqref{NSfintro}  is a special case of a more general system which accommodates  end-functionalized polymers described by non-Hookean spring potentials. However we do not pursue the mathematical questions of existence of solutions and inviscid limit of those other models here. The main theorem of our paper establishes global well-posedness  of the Navier-Stokes -- End-Functionalized system \eqref{NSbintro}--\eqref{NSfintro} and asymptotics of the resulting flow at large Reynolds number $\Re$. The result, which summarizes and combines Theorems \ref{thmglobal} and \ref{IL} in the main body, states
\begin{thm*}
Let $\Omega\subset \mathbb{R}^2$ be a planar bounded domain with smooth boundary. For any $T>0$, there exists a unique global  strong solution  $u^\nu$ of \eqref{NSbintro}--\eqref{NSfintro} with smooth initial data $u_0$ on $[0,T]\times \Omega$.  Let  $u$ be the global strong Euler solution with the same initial data. Then $u^\nu\to u$ strongly in $C(0,T;L^2(\Omega))$ as $\Re= VL/\nu\to \infty$. 
Furthermore, the wall-friction factor $\langle f\rangle$ tends to zero inversely with the Reynolds number 
\be
 \langle f\rangle := \nu \int_0^T \frac{1}{|\Omega|} \int_{\partial\Omega} \hat{n} \cdot \nabla u^\nu(x,t) \rmd S\rmd t, \qquad \frac{\langle f\rangle}{V}=   O(\Re^{-1}),
\ee
and likewise the energy dissipation $\langle \varepsilon^\nu \rangle$ tends vanishes as
\be
\langle \varepsilon^\nu \rangle := \fint_0^T \fint_\Omega \nu |\nabla u^\nu(x,t)|^2 \rmd x \rmd t, \qquad\qquad\ \ \frac{\langle \varepsilon^\nu \rangle}{V^3/L}  = O(\Re^{-1}).
\ee
\end{thm*}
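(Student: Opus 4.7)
The plan is to treat \eqref{NSbintro}--\eqref{NSfintro} as a 2D Navier--Stokes system coupled with a linear first-order ODE along $\partial\Omega$. Introducing on $\partial\Omega$ the auxiliary polymeric stress
$\sigma_i := 2(D(u^\nu)\hat{n})\cdot\hat{\tau}_i + \tfrac{1}{2R}u^\nu\cdot\hat{\tau}_i$, the boundary condition \eqref{NSfintro} becomes $\partial_t\sigma_i + \lambda\sigma_i = -\mu\, u^\nu\cdot\hat{\tau}_i$ with positive constants $\lambda = 4Hk_B\overline{T}/(R\zeta)$ and $\mu = k_B\overline{T}N_P/(\rho\nu R)$. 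Solving this ODE by Duhamel eliminates $\sigma$ and recasts the BC as a history-dependent Robin-type friction law for $u^\nu$ alone. Local-in-time existence of a unique strong solution then follows from a contraction argument: given a candidate velocity one produces $\sigma$ from the Duhamel formula, then solves NS with the resulting Robin BC, which is a well-posed parabolic problem.

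For global extension in 2D, I would combine two a priori estimates. First, the energy identity for $u^\nu$ and the $\sigma$-ODE produce, after a suitably weighted combination (absorbing the cross term $\nu\int_{\partial\Omega}\sigma\,u^\nu\cdot\hat{\tau}$ using the positivity of $\lambda,\mu$), uniform bounds $u^\nu\in L^\infty_tL^2_x\cap L^2_tH^1_x$ and $\sigma\in L^\infty_tL^2(\partial\Omega)$. Second, for higher regularity I work with the scalar vorticity $\omega = \curl u^\nu$. A direct calculation on the boundary shows
$\omega|_{\partial\Omega} = -\sigma + (\tfrac{1}{2R}+\kappa)\, u^\nu\cdot\hat{\tau}$,
where $\kappa$ is the curvature of $\partial\Omega$, so that $\omega$ satisfies an advection-diffusion equation with a Dirichlet BC controlled by $\sigma$ and $u^\nu|_{\partial\Omega}$. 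Maximum-principle and $L^p$-arguments then yield the needed $\omega$ bounds, from which propagation of higher Sobolev regularity and uniqueness follow in the classical way.

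For the inviscid limit and drag reduction, let $u$ denote the unique global strong Euler solution with the same initial data, and set $w:=u^\nu-u$. A direct energy estimate yields
\begin{align*}
\tfrac{1}{2}\tfrac{\rmd}{\rmd t}\|w\|_{L^2}^2 + \tfrac{\nu}{2}\|\nabla u^\nu\|_{L^2}^2 \le C\|\nabla u\|_{L^\infty}\|w\|_{L^2}^2 + \tfrac{\nu}{2}\|\nabla u\|_{L^2}^2 + \nu\left|\int_{\partial\Omega} w\cdot\partial_{\hat{n}} u^\nu\,\rmd S\right|.
\end{align*}
The boundary integrand is expanded in the local $(\hat{\tau},\hat{n})$ frame: its tangential component is $\sigma - (\tfrac{1}{2R}+\kappa)u^\nu\cdot\hat{\tau}$ by the BC, while its normal component is expressed via $\nabla\cdot u^\nu=0$ in terms of tangential derivatives of $u^\nu$ along $\partial\Omega$. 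The $L^\infty_tL^2(\partial\Omega)$ bounds on $\sigma$ and $u^\nu|_{\partial\Omega}$ together with the regularity of the Euler trace then force $\nu|\int_{\partial\Omega} w\cdot\partial_{\hat n}u^\nu|\le C\nu$, and Gronwall produces the single key bound
$\|w(T)\|_{L^2}^2 + \nu\int_0^T\|\nabla u^\nu(t)\|_{L^2}^2\,\rmd t \le C(T,u_0,f_b)\,\nu$.
This simultaneously gives the convergence $u^\nu\to u$ in $C(0,T;L^2)$ at rate $O(\nu^{1/2})$ and the dissipation scaling $\langle\varepsilon^\nu\rangle = O(\nu/L) = O(\Re^{-1})$. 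The wall-friction factor is then obtained by substituting the BC directly into $\nu\int_{\partial\Omega}\hat{n}\cdot\nabla u^\nu\,\rmd S$, writing it as $\nu$ times an $O(1)$ combination of $\sigma$, $u^\nu|_{\partial\Omega}$, and tangential derivatives thereof, so $\langle f\rangle/V = O(\Re^{-1})$.

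The main obstacle I anticipate is closing the regularity loop in global existence: propagating enough regularity of $\sigma$ on $\partial\Omega$ requires the same for the trace $u^\nu|_{\partial\Omega}$, which circles back to $L^\infty$-type bounds on $\omega$ via the boundary relation above. Making this work uniformly in $\nu$ (which in turn is what allows the inviscid-limit argument to close with an $O(\nu)$ right-hand side rather than an $O(1)$ one) depends crucially on the Stokes--Einstein identification $\zeta = 6\pi\rho\nu a$, which renders the ratio $\mu/\lambda$ independent of $\nu$ and hence ensures $\sigma$ inherits $\nu$-independent bounds from $u^\nu|_{\partial\Omega}$---this is the structural feature that makes the uniform-in-$\Re$ drag estimates possible.
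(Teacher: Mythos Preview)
Your proposal is correct and follows essentially the same architecture as the paper: recast the dynamic boundary condition as an ODE for the auxiliary stress $\sigma$, pass to vorticity with Dirichlet data on $\partial\Omega$ read off from $\sigma$ and $u^\nu|_{\partial\Omega}$, close global existence via energy plus a maximum-principle $L^\infty$ bound on $\omega$, and run a relative-energy argument against the Euler solution for the inviscid limit and drag estimates. One small correction: the boundary identity should read $\omega|_{\partial\Omega}=\sigma+(2\kappa-\tfrac{1}{2R})\,u^\nu\cdot\hat\tau$; your sign on $\sigma$ and the curvature coefficient are off, which matters when you try to exploit sign-definiteness later.

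There is one methodological difference worth noting in the inviscid-limit step. You propose to bound $\nu\int_{\partial\Omega}w\cdot\partial_{\hat n}u^\nu\,\rmd S$ directly via uniform $L^\infty_tL^2(\partial\Omega)$ control of $\sigma$ and $u^\nu|_{\partial\Omega}$. This works, but it requires the uniform-in-$\nu$ $L^\infty$ vorticity bound as input, since the energy identity alone gives only $L^2_tL^2(\partial\Omega)$ control of $u^\nu$ with a bad $\sqrt{\Re}$ prefactor. The paper instead splits the boundary integral into the $u^\nu$-piece and the Euler $u$-piece; for the former it substitutes the ODE in the form $u^\nu\cdot\hat\tau=-\tfrac{\St}{\alp\Re}(\partial_t+\tfrac{1}{\Wi})\sigma$, which turns the dangerous term into a total time derivative $-\tfrac{\St}{\alp\Re^2}\tfrac{\rmd}{\rmd t}\|\sigma\|_{L^2(\partial\Omega)}^2$ (absorbed into an augmented relative energy) plus a sign-definite damping term. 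The Euler cross-terms are then controlled by the damping via Young's inequality. This route needs only the basic energy bound, not the 2D vorticity maximum principle, and therefore extends verbatim to $d\ge 3$ on the joint lifespan of strong solutions. Your shortcut is cleaner in 2D but ties convergence to a dimension-specific estimate; both give the same $O(\Re^{-1/2})$ rate and the stated drag/dissipation scalings.
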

\noindent In the statement of the Theorem, the Reynolds number can be taken large either by reducing viscosity $\nu$ or increasing characteristic velocity $V$, with all other parameters fixed.  See Remark \ref{restinvlim} for a discussion of these different limits physically, as well as their limitations.

This result should be contrasted with the situation without polymer.  The two most commonly used boundary conditions for neutral Navier-Stokes fluids are the so-called no-slip and Navier-friction (with viscosity dependent slip-length) conditions.  No-slip, or stick, boundary conditions correspond to the situation in which that the fluid velocity matches the boundary velocity (which we here consider stationary):
\begin{align} \label{noslip}
u^\nu = 0 \quad  \text{on}  \quad  \partial \Omega \times (0,T).
\end{align}
On the other hand, the Navier-friction boundary conditions\footnote{We remark that, in steady-state, the boundary condition  \eqref{NSfintro}  which we propose to describe wall-grafted polymers reduces to a Navier-friction condition \eqref{NSslip1} with a slip-length defined by characteristics of the polymer additives and fluid solvent.} combine non-penetration \eqref{nonpen} together with
\begin{align} \label{NSslip1}
2\nu (D(u^\nu)  \hat{n})_{\tau_i}&= -\alpha u^\nu\cdot \hat\tau_i \quad \text{on}  \quad  \partial \Omega \times (0,T)
\end{align}
for $i=1,\dots, d-1$.
In Eqn. \eqref{NSslip1}, $\alpha:= \alpha(x)$ is a smooth positive function. The (variable) slip-length is defined as $\ell_s:= \nu/\alpha$.   This boundary condition allows the fluid to slip tangentially along the boundary for all $\nu>0$.  Both the no-slip and Navier-friction condition above arise rigorously from the Boltzmann equation in the hydrodynamic limit with appropriate scalings \cite{JM17}. The nature of the inviscid limit for the Navier-Stokes system \eqref{NSbintro}--\eqref{nonpen} coupled with either of these physical boundary conditions  \eqref{noslip} or \eqref{NSslip1} and its connection to the Euler equations for an inviscid fluid is an outstanding open problem.  
We briefly review the status presently.

   The main physical process which makes the behavior of fluids with small viscosity  so rich and difficult to analyze is the formation of thin viscous boundary layers which may become singular in the inviscid limit, detach from the walls and generate turbulence in the bulk.  In contrast to the situation without solid boundaries, process can occur even if a strong Euler solutions exists (which holds true globally in time, for example, in two spatial dimensions from smooth initial conditions).
A fundamental result in this area is due to Kato \cite{K72}, who proved that the following two conditions are equivalent: (i) the integrated energy dissipation 
vanishes in a very thin boundary layer of thickness $O(\nu)$ and
(ii) any Navier-Stokes solution with no slip boundary conditions at the wall converges strongly in $L^\infty_tL_x^2$ to the
Euler solution as $\nu\to 0$. Additionally, the above holds if and only if  the \emph{global} dissipation $\langle \varepsilon^\nu \rangle$ vanish in the inviscid limit
\be
\langle \varepsilon^\nu \rangle \to 0 \qquad \text{as}\qquad \nu\to 0.
\ee
Another important equivalence condition of particular relevance to our work was established by Bardos and Titi (Theorem 4.1 of \cite{BardosTiti13}, Theorem 10.1 of \cite{Kelliher17}), who prove that convergence to strong Euler in the energy space is equivalent 
to the wall-friction velocity $u_*$ (related to the local shear stress at the wall)\footnote{We recall this convention here to connect with the literature on wall-bounded turbulence.  Obviously, the right-hand-side of \eqref{frictionvel} need not be sign definite.  However, this definition of the friction velocity via the formula \eqref{frictionvel} is borrowed from the turbulent channel flow literature in which $u_*^2:=\nu \partial_2 \ol{u}_1|_{x_2=0}$ where $\ol{u}_1:=\ol{u}_1(x_2)$ is the mean (e.g. Reynolds averaged) velocity profile.  There, it is expect that $\ol{u}_1(x_2)$ is an increasing function near the wall at $x_2=0$, so $\partial_2 \ol{u}_1|_{x_2=0}>0$ and the definition makes sense. } vanishing 
\be\label{frictionvel}
 u_*^2:= \nu   (\partial_{\hat{n}} u^\nu)_{\hat{\tau}}  \wc 0 \qquad \text{as}\qquad \nu\to 0
\ee
in a weak sense on $\partial \Omega\times [0,T]$, integrating against $\varphi\in C^1([0,T]\times \partial\Omega)$ test functions.

 Aside from these equivalence theorems, most of the known results establish the strong inviscid limit under a variety of conditions, see e.g. ~\cite{kato,BardosTiti13, ceiv,ConstantinKukavicaVicol15,Kelliher07,TemamWang97b,Kelliher06}.    For no-slip boundaries, some unconditional results are known in settings for which laminar flow can be controlled for short times \cite{SammartinoCaflisch98b,Maekawa14,GV16,Kelliher09,LopesMazzucatoLopesTaylor08,MazzucatoTaylor08}. These unconditional results hold before any boundary layer separation or other turbulent behavior occurs.  
 
 On the negative side, it has been shown recently that the Prandtl Ansatz is, in general, false for no-slip conditions and that the $L^\infty$-based Prandtl expansion fails for unsteady flows \cite{GN18}.  Moreover, there is a vast amount of experimental and numerical evidence for anomalous dissipation, i.e. the phenomenon of non-vanishing dissipation of energy in the limit of zero viscosity, in the presence of solid boundaries.
 For example, see the experimental work of \cite{KRS84,PKW02} from wind tunnel experiments and of \cite{C97} for more complex geometries.  In two-dimensions, the works \cite{Farge18} and \cite{Farge11}  convincingly show through a careful numerical study that anomalous dissipation occurs from vortex dipole initial configurations with both no-slip \eqref{noslip} and Navier-friction conditions \eqref{NSslip1} respectively.  See extended discussion of the evidence in \cite{DN18,DE17leray}.  In light of Kato's equivalence,  in situations exhibiting anomalous dissipation convergence cannot be towards a strong solution of Euler. Recently progress has been made towards giving minimal conditions for the inviscid limit to weak Euler solutions to hold \cite{CV17, DN18wl,CLNLV18}.  Such solutions may provide a framework to describe the anomalous dissipation in the inviscid limit as envisioned by Onsager \cite{O49}.  See \cite{DN18,DN18wl,BT18,BTW18} for recent progress in this direction.

The purpose of the above review is to provide sharp relief to the results of the present paper which are summarized in the main Theorem. According to our theory (at least in two-dimensions), when polymers are attached to the wall,
these additives provide a mollifying effect in the inviscid limit which allows passage of solutions to strong solutions of the Euler equations.   Moreover, we obtain a precise bound, $O(\Re^{-1})$, on the rate that the wall-friction and global dissipation vanishes.\footnote{ The mechanism by which polymer reduces drag is -- effectively -- to create an slip-length at the wall which is constant in $\Re$. However, physically, this prediction should be interpreted as an intermediate-asymptotics for large but finite $\Re$.  Specifically, the $\Re$ regime in which our prediction holds is restricted by the assumptions which lead the the derivation of our PDE system.  The most restrictive of these is the assumptions that, from the macroscopic point of view, the polymers form a continuous carpet at the walls.  As $\Re$ increases without bound, small eddies containing an appreciable amount of energy will develop down to the typical length-scale $R$ of the polymer and therefore likely invalidate this particular (and possibly other) assumption. We will revisit these issues in Remarks \ref{validity} and  \ref{restinvlim}.}  As discussed above, the vanishing of both these objects for no-slip boundary conditions are necessary and sufficient for passage to strong Euler in the inviscid limit, although neither case yet been unconditionally proved for arbitrary finite times in that setting and there is strong evidence that in general such convergence will fail \cite{Farge18,Farge11}.
 In terms of the friction factor $f$, our rate agrees qualitatively with the Hagen--Poiseuille law $\langle f\rangle = 64/\Re$ which can be observed experimentally in laminar pipe flow  \cite{Virk,BA87a}.  
Thus, our prediction is that the introduction of end-functionalized polymer effectively laminarizes the flow.    

In summary, we have formulated a macroscopic model to study fluid-polymer interaction where the effect of the polymer is confined to the wall and is mathematically described by a dynamic boundary condition.  Furthermore, we prove that the resulting equations form a well-posed initial value problem and exhibit drag reduction in a quantitative way.  We believe that our model can shed some light on the essential physical mechanisms behind the polymer drag reduction phenomenon. 
In particular, it has been argued that the observed drag reduction phenomenon requires that the wall-normal vorticity flux  be drastically reduced by polymer additives \cite{Eyink08}, and some possible mechanisms for this reduction are therein discussed.  It would be interesting to use our model to explore and clarify the relevant mechanisms.

\section{Navier-Stokes -- End-Functionalized Polymer System}\label{derivation}

Here, we provide a \emph{formal} (non-rigorous) derivation of a system of equations and boundary conditions to describe the setting of a neutral fluid confined to a domain with end-functionalized polymer along the solid walls.  Our assumptions, $(A_1)$--$(A_8)$, are detailed below.

\subsection{Kinetic Theoretic Derivation}
We consider general bounded domains $\Omega \subset \mathbb{R}^d$ for $d\geq 2$ with smooth boundary $\partial\Omega$. At the end of the section, we will discuss the interpretation for two-dimensional case. Our models are based on the following set of assumptions  (see Figure \ref{fig} for a schematic multi-scale cartoon): \vspace{2mm}
\begin{enumerate}[label=(\subscript{A}{{\arabic*}})]
\item  \emph{One-end anchored.}   The layer consists of polymers floating in the solvent with one end anchored to the wall (e.g. chemically bound or strongly adsorbed).  \vspace{2mm}
\item \emph{Wall coating.}  The grafted polymers covers the boundary surface, and the thickness of this covering layer is the order of characteristic length-scale, denoted by $R$, of polymers. We can think of $R$ as the gyration radius of the tethered polymer.  \vspace{2mm}
\item \emph{Multi-scale assumption.}  We assume that at the scale of the polymer, the surrounding fluid can be described as a continuum and also that the polymer appears `infinitesimal' from the perspective of the macroscopic fluid, i.e. we assume scale seperation
\be \label{Multiscale}
\lambda_{mf} \ll R\ll \lambda_\nabla,
\ee 
where $\lambda_{mf}$ is the mean-free path of the molecules making up the solvent and $\lambda_\nabla$ is the gradient length of the continuum description of the fluid (i.e. typical variation scale of the macroscopic flow).  In particular, the polymer should fit well within the near-wall viscous sublayer of the flow.  Additionally, in the case of domains with curvilinear boundary, we  assume that the typical scale of the polymer $R$ is much small relative to the radius of curvature of the boundary
\be
R\ll \text{(minimum radius of boundary curvature)},
\ee
say ${1}/{R} > 4 \max_{x\in\partial\Omega}\kappa$, where $\kappa$ is the boundary curvature defined by \eqref{curvature}. Therefore, the configuration space for polymers at $x \in \partial \Omega$ with its outward normal vector $\hat{n} = \hat{n} (x)$ is given by a flat half-space,
\be\label{Mx}
M(x) :=  \{ m \in \mathbb{R}^{d} : m \cdot (- \hat{n}(x)) > 0 \}.
\ee 
In the case where finite extend mode is employed (e.g. FENE), then this domain is intersected with a ball $B_r(0)$, thereby building in the finite stretching range $r$ of the polymer.\vspace{2mm}
\end{enumerate}

The above assumptions are concerned with small-scale polymer structure and allow us to determine how the polymer `sees' the large scale fluid solvent and the boundary.  We now make an assumption on the structure of the near-wall velocity at those scales of $O(R)$, which determines how the fluid interacts with the polymers.  This ``microscopic" structure assumption will be forgotten in our continuum model, within which it translates simply to a tangential slip velocity along the boundary.\begin{enumerate}[label=(\subscript{A}{{\arabic*}})]
  \setcounter{enumi}{4}
\item \emph{Velocity field of the flow inside the layer.} Microscopically (at the scale of the polymer $R$), we approximate the velocity of the flow inside the layer by a linear shear.  Specifically, the velocity linearly interpolates between the wall side where it vanishes (assuming no-slip on the polymer scale) and its value at near the boundary of the polymer layer which is $u$ and which is tangent to the boundary.   See third panel of Fig \ref{fig}. This ``outer" velocity $u$ becomes the velocity \emph{at the boundary} in our macroscopic closure.  
\end{enumerate}

\begin{figure}[h!]
\centering
\includegraphics[width=1 \textwidth]{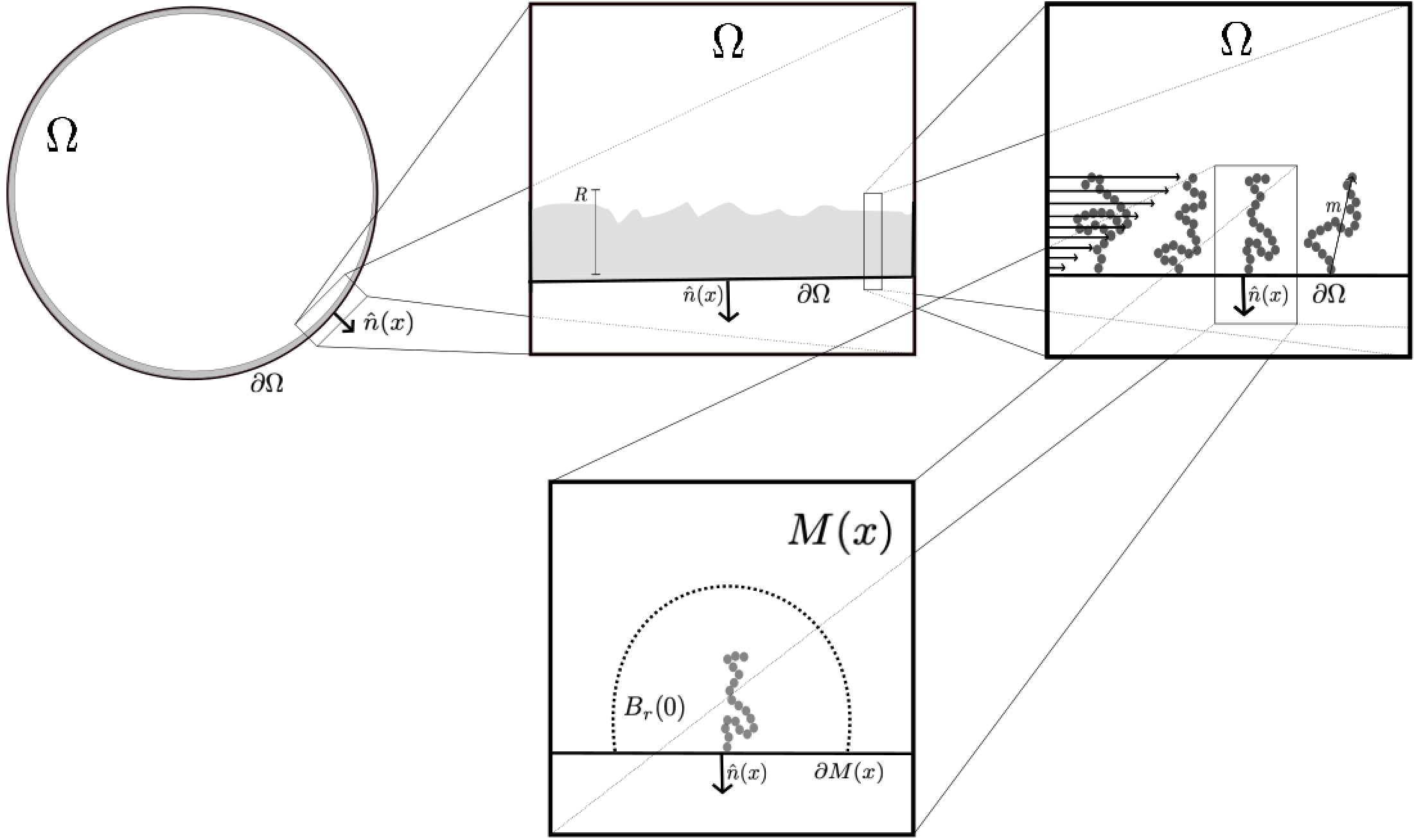}
\caption{
Schematic of the basic multi-scale nature of our polymer model.
}\label{fig}
\end{figure}

Because of assumptions $(A_1)$ -- $(A_4)$, we impose the following boundary condition: since the thickness of the layer is far less than the macroscopic length-scale, we only care about the response of the layer for the flow at wall. We do not incorporate the thickness or shape of the layer in our model. 
We do not have stress balance condition for normal stress $\hat{n} \cdot \Sigma_{F} \cdot \hat{n}$.\footnote{One can ask whether or not the normal stresses also balance, i.e. whether $\hat{n} \cdot \Sigma_L \cdot \hat{n} = \hat{n} \cdot \Sigma_F \cdot \hat{n}$. In our work, we work in a regime in which the layer does not appreciably move or deform in the normal direction.  Consequently, the net force (per unit area) in the normal direction acting on the layer is zero, that is, $  \Sigma_L \cdot \hat{n} + \vec{N} =  \Sigma_F \cdot \hat{n}$, where $\vec{N}$ is the normal force (per unit area) that the wall exerts to the polymer layer. That is, the fluid parcels adjacent to the wall feel the presence of the wall in the normal direction. To explain further, we note that along the fluid-layer boundary the force (per unit area) $(\Sigma_F - \Sigma_L) \cdot \hat{n}$ is applied to the layer. On the other hand, along the layer-wall boundary the normal force (per unit area) $\vec{N}$ is applied to the layer. Then we have balance of two forces, as the layer is steady in the normal direction.} On the other hand we have stress balance condition for the shear stress since the layer, which is a mixture of solvent and polymer, covers the wall.
We formalize this as an assumption:

\begin{enumerate}[label=(\subscript{A}{{\arabic*}})]
  \setcounter{enumi}{4}
\vspace{2mm}
\item  \emph{Tangential stress balance.}  The layer along (impermeable) wall exerts elastic stress due to the restoring force of the fluid-polymer layer which balances the viscous stress of the bulk fluid.  \vspace{2mm}
  \end{enumerate}

  This assumption gives the following: given a point $x$ on the boundary, let $\hat{n}$ be the outward normal vector and $u$ be the fluid velocity at $x$. Let $\Sigma_{L}$ be the stress exerted by the layer {(normalized by  $\rho$)}, and $\Sigma_{F}$ be the stress exerted by the bulk fluid. By impermeability and $(A_5)$ we have
\begin{align}
u \cdot \hat{n} &= 0, \qquad\qquad\quad \quad \ \ \ \ \text{on}\quad \partial \Omega,\\
\hat{\tau}_i \cdot \Sigma_{L} \cdot \hat{n} &=  \hat{\tau}_i \cdot \Sigma_{F} \cdot \hat{n}, \qquad\quad \ \text{on}\quad  \partial \Omega,
\label{Macro}\\
&\qquad\qquad\qquad\qquad \quad  \ i=1,  \dots, d-1,\nonumber
\end{align}
where, for every $x\in \p\Omega$, the vectors $\{ \hat{\tau}_i(x)\}_{i=1}^{d-1}$ form an orthogonal basis of the tangent space of $\partial\Omega$ at $x$.   The stress that the layer exerts is 
a combination of that due to polymer $\Sigma_{P}$ and fluid solvent $\Sigma_{S}$ in the layer,
\be
\Sigma_{L} = \Sigma_{S} + \Sigma_{P}.
\ee
The stress associated to the solvent in the layer is determined from assumption $(A_4)$.  In particular, it is set by the relative velocity near the wall (as it is in for, e.g. Navier-friction boundary condition) so that  $\hat{n} \cdot \Sigma_{S} = -\frac{\nu}{2R} u + \vec{N}$, where $\vec{N}$ is the wall normal force. 
The corresponding stress balance (\ref{Macro})\footnote{Without polymer, this stress-balance argument yields the Navier-friction boundary condition \eqref{NSslip1}.  Specifically, under the assumption $(A_4)$, we consider a fluid parcel of thickness $\lambda$, which is much smaller than the flow length-scale $L$, which is in contact with the wall. As in our case, we set up an effective boundary condition on top of this fluid parcel. Again we assume there is no inflow from the rest of the fluid domain to this fluid parcel. Then, its normal stress $\Sigma_{L} \cdot \hat{n}$ can be similarly approximated by $ -\frac{\nu}{2\lambda} u$ and by the continuity of stress for a Navier-Stokes fluid we obtain
\be \label{NScondition}
2 \left ( D({u} ) \hat{n} \right ) \cdot \hat{\tau}_i + \frac{1}{2 \lambda/L} {u} \cdot \hat{\tau}_i = 0.
\ee
The natural regime of validity for the above assumptions to hold in a viscous fluid without polymer additives forces $\lambda=O(\nu)$ so that the layer lies within the viscous sublayer.  In this way, \eqref{NScondition} recovers the physical Navier-friction boundary condition \eqref{NSslip1} which is rigorously derivable in the hydrodynamic limit from Boltzmann \cite{JM17} (see also \cite{EGKM18}.)}
then reads
\begin{align*}
\hat{n} \cdot \Sigma_F \cdot \hat{\tau}_i &= \hat{n} \cdot \Sigma_P \cdot \hat{\tau}_i - \frac{\nu}{2R} u \cdot \hat{\tau}_i, \qquad\quad \ \text{on}\quad  \partial \Omega,\\
&\qquad\qquad\qquad\qquad\quad\qquad \quad  \ i=1,  \dots, d-1.\nonumber
\end{align*}

The final ingredient for our model is then $\Sigma_{P}$, the polymer layer stress. To obtain this, we need to say something about the structure and dynamics of the polymer additives.  Based on  $(A_1)-(A_4)$, we  assume\vspace{2mm}

\begin{enumerate}[label=(\subscript{A}{{\arabic*}})]
  \setcounter{enumi}{5}
\item \emph{Bead-Spring approximation.} Polymers are modeled as elastic dumbbells whose configuration is characterized by an end-to-end vector $m$ with one end anchored to the wall and the other end free to move. They are taken to have a spring potential $k_B \overline{T} U(m)$, where $U(m)$ is non-dimensional spring potential.  See Figure \ref{fig2}.  \vspace{2mm}

\begin{figure}[h!]
\centering
\includegraphics[width=0.5 \textwidth]{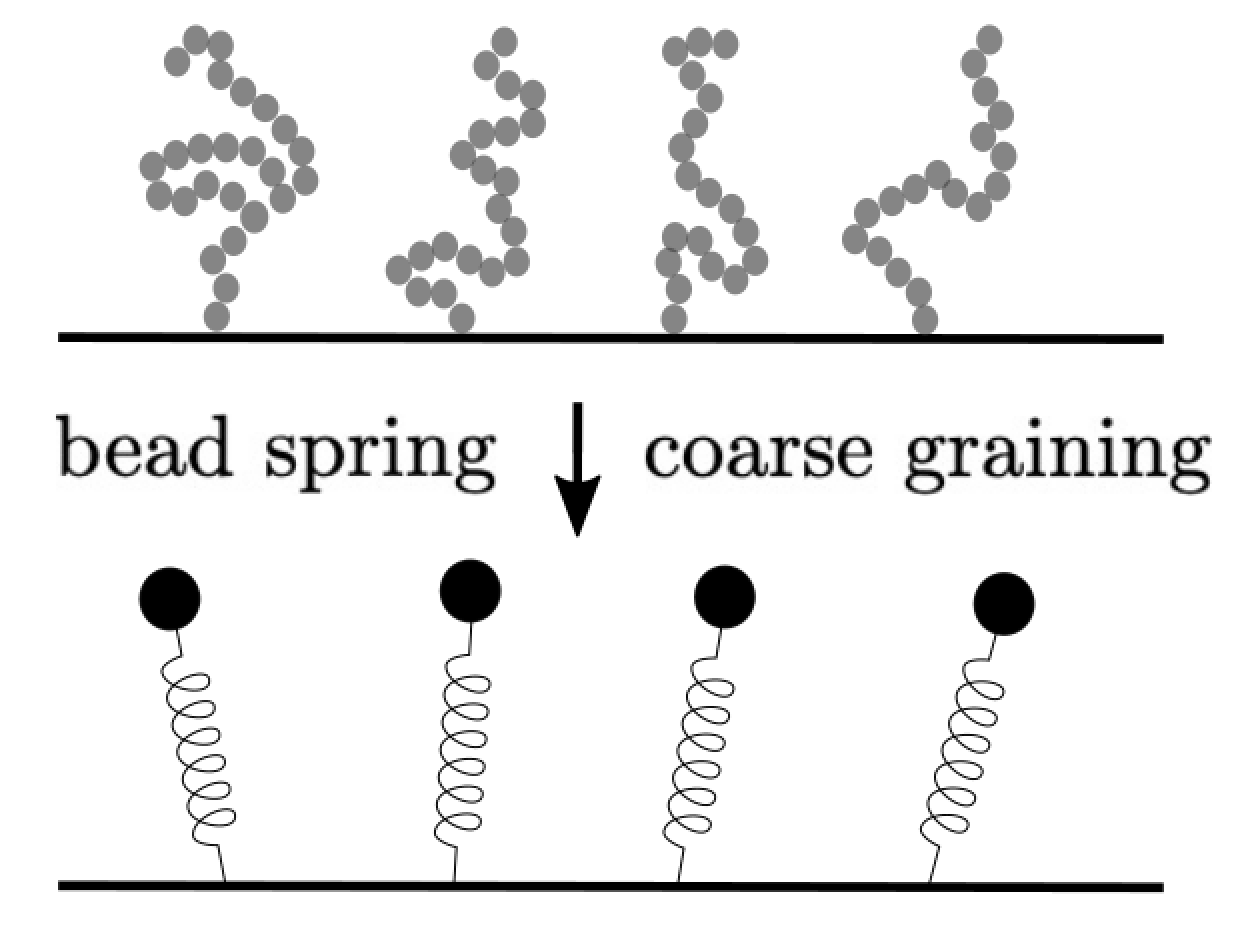}
\caption{
Schematic of the bead spring coarse-graining in configuration space.
}\label{fig2}
\end{figure}

\item \emph{Reflecting condition.} Within the bead-spring approximation, we assume that if the bead hits the boundary then it reflects in the direction of the inward normal vector.  \vspace{2mm}
\item \emph{Single-Chain approximation at the wall.} For simplicity, we ignore the interaction between polymers anchored at the wall. We calculate the dynamics of each polymer as if there is only single chain anchored at the wall, and add them. This puts us in the so-called \emph{mushroom regime}.\footnote{We remark that to be in the ``mushroom regime" in which the polymers do not interact, one requires that the polymer number density $N_P$ defined by \eqref{rhoL} satisfy $N_P <N^*$ where $N^*\sim a_0^{-2} N^{-6/5}$ where $N$ is the polymerization index \cite{dG80} and $a_0$ is the monomer size (see  Chp. 13 of \cite{NA03}).}
 \vspace{2mm}
 
\end{enumerate}

From assumptions  $(A_1)-(A_8)$, we may describe the dynamics of polymers anchored at the wall, and derive Fokker-Planck equation for the polymer probability distribution, denoted by $f_P(x, m, t)$. The final ingredient of the model, required for \eqref{Macro}, is the expression for the stress, and we use Kramers formula \cite{Ottinger12}:
\begin{align} 
\Sigma_{P} = \frac{k_B \overline{T}}{\rho}  \int_{M(x)} m \otimes \nabla_m U f_Pdm.  \label{Kramers}
\end{align}
Although the expression \eqref{Kramers} is standard in theoretical polymer physics, we provide a short derivation in Appendix \ref{AppendKramer} as it is crucial for the derivation of our model. 
We make a brief remark now about dimensions.
 We note that $\rho$, the solvent mass-density, is taken constant and has units of $M/L^d$.  Then ${k_B \overline{T}}/{\rho} $  has units $L^{2+d}/T^2$. Also we assume that polymers are uniformly grafted over the wall.  Specifically, the polymer number density $N_P $ at every $x\in \partial\Omega$ (which is preserved in time by the dynamics for each $x$), is taken to be constant on the boundary, i.e.
\begin{align}\label{rhoL}
N_P := \int_{M(x)} f_Pdm ={\rm (const.)}.
\end{align}
The units of $N_P $ is taken as $1/L^d$. The dimension of ${k_B \overline{T}} N_P /{\rho}$ is $(L/T)^2$, the same as that of stress $\Sigma_L$.

\begin{rem}
Examples for potential choices of configuration spaces and spring potentials are:
\begin{enumerate}
\item Hookean-type dumbbell: we set $r$ in  $(A_4)$ to be $r = \infty$ and 
\be \label{hookeanpot}
U (m) = H \left ( \frac{|m|}{R} \right )^{2k}, \qquad k \ge 1,
\ee
where $H$ is the non-dimensionalized spring constant.  Note that, compared with the standard (dimensional) spring constant $H_{st}$ where $k=1$, we have the relation
$
H_{st}=H {k_B\ol{T}}/{R^2}.$

\item FENE (Finitely Extensible Nonlinear Elastic) models: we have a finite $r < \infty$ in $(A_4)$ and take
\be
U(m) = - H \log \left ( 1 - \frac{|m|^2}{R^2} \right ).
\ee
\end{enumerate}
\end{rem}
 To derive a governing equation for the end-functionalized polymers, we follow Ottinger \cite{Ottinger12}. For the polymer of configuration $m$, anchored at the wall of position $x$ (according to $(A_6)$) and initially in configuration $m_0$, the evolution of $m:=m_t(m_0)$ is determined by the deterministic forces (drift velocity and elastic restoring force) and random fluctuation. 
 Since the length-scale of the polymer $R$ is assumed small relative to the minimum radius of curvature at the boundary across the domain, a polymer pinned at any given $x\in \partial\Omega$ on the boundary is assumed to wander around the half-space $M(x)$ defined by the normal $\hat{n}(x)$ at that point (according to $(A_3)$).  Moreover, we assume that if the polymer end is simply reflected in the direction of the wall-normal $\hat{n}(x_0)$ in the event that it randomly hits the boundary (according to $(A_7)$). 
 
 Specifically, under the bead-spring approximation $(A_6)$,  drift velocity from the near-wall linear shear $(A_4)$ on the polymer is given by
\begin{align}
\text{(drift by fluid experienced by polymer)} =\left ( \frac{m}{R} \cdot \left ( - \hat{n} \right ) \right ) u  . \label{Drift} 
\end{align}
The elastic restoring force is simply  $\frac{k_B \overline{T}}{\zeta} \nabla_m U$ and also contributes to the drift on the bead.  The noise is assumed to be of additive Brownian type with strength $\sqrt{\frac{2k_B \overline{T}}{\zeta} }$.
 Therefore, for each $x\in \partial \Omega$, the polymer end-to-end extension $m_t(m_0):= m_t(m_0;x)\in M(x)$ is a stochastic process described by a reflecting drift-diffusion process  on the half-plane $M(x)$:
\begin{align}\nonumber
\rmd m_t (m_0) &= \left ( \frac{u(x, t) }{R} m_t (m_0)  \cdot \left ( - \hat{n}(x) \right ) - \frac{k_B \overline{T}}{\zeta} \nabla_m U(m_t(m_0))\right ) \rmd t + \sqrt{\frac{2k_B \overline{T}}{\zeta} }  \rmd W_t  + {\hat{n}}(x) \ \rmd \ell_{t} (m_0),\\
 m_t(m_0)|_{t=0}&=m_0 \in M(x)   \label{SDE}
\end{align}
where $W_t$ is a $d$-dimensional standard Brownian motion, and $\ell_t(m_0)$ is the boundary local time density which, for a stochastic polymer end located at some $m\in M(x)$ at time $t$ is the time within the interval $[0,t]$ which is spent near the boundary $\partial M(x)$ per unit distance \cite{SV71,LS84}.  It is formally defined by
\be
\ell_t(m_0) = \int_0^t \delta \left({\rm dist}(m_s (m_0), \partial M(x)\right)  \rmd s.
\ee
See Theorem 2.6 of \cite{BCS04}. We remark that Lions \& Sznitman \cite{LS84} proved existence and uniqueness of stochastic processes as
strong solutions to this ``Skorohod problem" with Lipschitz drifts and sufficient smooth boundaries with regular normal vectors $\hat{n}$. For an extended discussion, see \S 2 of \cite{DE17}. The Fokker-Planck equation associated to the stochastic differential equation (\ref{SDE}) reads
\begin{align}  \label{FP}
\partial_t f_P+ \nabla_m \cdot \left (  \left ( \frac{u(x, t) }{R} \left ( m \cdot \left ( - \hat{n} \right ) \right ) - \frac{k_B \overline{T} }{\zeta} \nabla_m U \right ) f_P\right )&=  \frac{k_B \overline{T}}{\zeta} \Delta_m  f_P\quad \text{in} \quad [0,T]\times M(x), \\ \label{FPbd}
\hat{n}(x) \cdot \nabla_m f_P&= 0 \hspace{20mm} \text{on} \quad [0,T]\times \partial M(x),
\end{align}
 for each $x\in \partial \Omega$.
To sum up, we arrive at the microscopic/macroscopic system
\begin{align}\label{Msys1}
\partial_t u &= \nabla_x \cdot \Sigma_F  + f_b,\quad\hspace{5mm}    \text{in} \quad   \Omega \times (0,T), \\
u|_{t=0} &= u_0   \qquad\qquad \quad\quad\hspace{5mm} \text{on} \quad \!\!  \Omega \times \{t=0\},\\ 
\nabla \cdot u  &= 0 \qquad\qquad\quad\hspace{5mm} \ \ \ \ \ \ \text{in} \quad    \Omega \times [0,T),\\ \label{noflow}
u \cdot \hat{n} &= 0 \qquad\qquad\quad\quad\hspace{5mm}\ \  \text{on} \quad  \!\!\partial \Omega \times [0,T),\\
\hat{\tau}_i \cdot \Sigma_{F} \cdot \hat{n} &=\hat{\tau}_i \cdot \Sigma_{L} \cdot \hat{n}   \qquad\quad\quad \ \text{on}\quad   \partial \Omega \times (0,T),\label{Msys2}\\
 &\hspace{38mm}  i=1,  \dots, d-1 \nonumber
\end{align}
where $f_b$ is a body forcing, the $\Sigma_F$ is the fluid stress tensor, which for a simple Navier-Stokes fluid reads
\be\label{NSstress}
\Sigma_F := - u^\nu\otimes u^\nu - p^\nu \mathbb{I} + 2\nu D(u^\nu),
\ee
recalling that $D(u)= 1/2(\nabla_x u +( \nabla_x u)^t)$ is the symmetric part of the velocity gradient tensor and 
\be
\hat{\tau}_i \cdot \Sigma_{L} \cdot \hat{n} = \hat{n} \cdot \Sigma_P \cdot \hat{\tau}_i - \frac{\nu}{2R} u \cdot \hat{\tau}_i,
\ee 
where the polymer stress $\Sigma_{P}$ is given by the Kramers expression (\ref{Kramers}), which is closed by the Fokker-Planck equation (\ref{FP}) for the polymer distribution at the boundary,  $f_P$ which is supplied with initial conditions $f_P(0)$.  The system \eqref{Msys1}--\eqref{Msys2} \& (\ref{FP})--\eqref{FPbd} comprises our proposed  microscopic-macroscopic system to describe the Navier-Stokes-fluid/end-functionalized polymer interaction.  Note that due to the impermeability condition $u \cdot \hat{n} = 0$ on the boundary the stress that the fluid exerts on the wall is entirely due to viscosity
\be
\hat{\tau}_i \cdot \Sigma_{F} \cdot \hat{n} = 2 \nu \  \hat{\tau}_i \cdot D(u) \cdot \hat{n}.
\ee

\begin{rem}[On the validity of assumptions]\label{validity}

In our opinion, the most subtle of our assumptions are $(A_4)$ and $(A_8)$. First, one may question whether $(A_8)$ (single-chain approximation so that the polymers do not interact with eachother) can be compatible with $(A_2)$ (that, from the macroscopic point of view, the polymer forms a continuous carpet along the boundary). We believe there is a regime of validity where these assumptions coexist, however, even if it is not the case, we interpret $(A_8)$ as a first-hand approximation of the regime in which polymers are close enough to effectively cover the wall but their interactions are not too strong. This interpretation naturally asks a more realistic assumption to replace $(A_8)$. On regime  to consider is that of the ``polymer brush", in which the polymers are spaced close together on the boundary and may strongly interact with each other \cite{NA03,NA01}. It is unclear to us whether or not a fully macroscopic description for this regime will be possible. If not, a coupled microscopic-macroscopic system must be studied to understand the behavior in this regime.

For $(A_4)$, the central issue is the range of parameters which makes linear shear approximation valid. For large enough $\alp$ and small enough $\Re$, the flow will be laminar near the walls and assumption $(A_4)$ should valid. On the other hand, for large $\Re$ the flow will develop small-scales, possibly invalidating the aforementioned justification of $(A_4)$. If this boundary condition regularizes the macroscopic (outside the polymer layer) near-wall flow and it resembles a linear shear, it provides a supporting evidence for $(A_4)$. 
 It would also be interesting to probe $(A_4)$ by microscopic methods, e.g. using molecular dynamics \cite{Karniadakis1, Karniadakis2}.
\end{rem}

\subsection{Energetics:  microscopic/macroscopic balance} 

\begin{prop}
Suitably smooth solutions of \eqref{Msys1}--\eqref{Msys2} satisfy the following global energy balance
\begin{align} \nonumber
\frac{\rmd}{\rmd t} \left ( \frac{1}{2} \int_\Omega |u|^2 dx + \frac{k_B \overline{T}}{\rho} R \mathcal{E} \right ) &= - \int_\Omega \nabla_x u : \Sigma_F dx \\ 
&- \frac{\nu}{2R} \int_{\partial \Omega} |u|^2 dS -\frac{k_B \overline{T}}{\zeta}  \int_{\partial \Omega}\int f_P\left | \nabla_m (\log f_P+ U ) \right | ^2 dm \rmd S.  \label{Energy}
\end{align}
\end{prop}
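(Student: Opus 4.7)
The plan is a coupled energy estimate: pair the momentum equation \eqref{Msys1} with $u$ in $L^2(\Omega)$, compute $\frac{\rmd}{\rmd t}$ of the polymer free energy on $\partial\Omega$ using \eqref{FP}--\eqref{FPbd}, and show that the polymer-stress power $\int_{\partial\Omega} u\cdot\Sigma_P\cdot\hat n\,\rmd S$ is the pivot which couples the two and cancels upon addition. First, I would pair \eqref{Msys1} with $u$, use $\nabla_x\cdot u=0$ and the divergence theorem to write
\begin{align*}
\tfrac{\rmd}{\rmd t}\tfrac12\int_\Omega |u|^2\,\rmd x
= -\int_\Omega \nabla_x u:\Sigma_F\,\rmd x
+ \int_{\partial\Omega} u\cdot\Sigma_F\cdot\hat n\,\rmd S + \int_\Omega f_b\cdot u\,\rmd x.
\end{align*}
Because $u\cdot\hat n=0$, the surface integrand reduces to $\sum_i (u\cdot\hat\tau_i)(\hat\tau_i\cdot\Sigma_F\cdot\hat n)$. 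Applying the tangential balance \eqref{Msys2} together with $\hat\tau_i\cdot\Sigma_L\cdot\hat n=\hat\tau_i\cdot\Sigma_P\cdot\hat n-\frac{\nu}{2R}u\cdot\hat\tau_i$ (as derived in the model) converts this surface term to $\int_{\partial\Omega}u\cdot\Sigma_P\cdot\hat n\,\rmd S - \frac{\nu}{2R}\int_{\partial\Omega}|u|^2\,\rmd S$, yielding the boundary slip dissipation in \eqref{Energy} plus the coupling term.

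Next, I would take the polymer relative entropy $\mathcal{E}=\int_{\partial\Omega}\int_{M(x)}f_P(\log f_P+U)\,\rmd m\,\rmd S$, understanding the stated prefactor $k_B\overline T R/\rho$ to include any normalization of $\mathcal{E}$. Differentiating in $t$, mass conservation \eqref{rhoL} kills the constant arising from $\partial_t f_P\cdot 1$. Substituting \eqref{FP} and integrating by parts in $m\in M(x)$, the diffusive piece produces $-\frac{k_B\overline T}{\zeta}\int_{\partial\Omega}\int f_P|\nabla_m(\log f_P+U)|^2\,\rmd m\,\rmd S$; its boundary flux on $\partial M(x)$ vanishes because the Neumann condition \eqref{FPbd} and rotational symmetry of $U$ force $\hat n\cdot\nabla_m(\log f_P+U)=0$ on $\partial M(x)$. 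The drift boundary contribution on $\partial M(x)$ also vanishes since $m\cdot(-\hat n)=0$ there.

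The remaining drift contribution to $\rmd\mathcal{E}/\rmd t$ is
\begin{align*}
\int_{\partial\Omega}\!\int_{M(x)}\tfrac{m\cdot(-\hat n)}{R}\,f_P\,u\cdot\nabla_m(\log f_P+U)\,\rmd m\,\rmd S.
\end{align*}
Using $f_P\nabla_m\log f_P=\nabla_m f_P$, the $\log f_P$ piece is a further exact $m$-divergence: integrating by parts in $m$ once more, the bulk term $u\cdot\nabla_m[\tfrac{m\cdot(-\hat n)}{R}]=-u\cdot\hat n/R$ and the $\partial M(x)$ boundary flux $\propto u\cdot\hat n$ both vanish by impermeability, so this subterm is zero. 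The surviving $u\cdot\nabla_m U$ piece I would match to the polymer-stress power: Kramers's formula \eqref{Kramers} and symmetry of $\Sigma_P$ for rotationally invariant $U$ give
\begin{align*}
u\cdot\Sigma_P\cdot\hat n=\tfrac{k_B\overline T}{\rho}\int_{M(x)}(m\cdot\hat n)(u\cdot\nabla_m U)\,f_P\,\rmd m,
\end{align*}
so that after multiplying $\rmd\mathcal{E}/\rmd t$ by the prefactor $k_B\overline T R/\rho$, this drift term becomes exactly $-\int_{\partial\Omega} u\cdot\Sigma_P\cdot\hat n\,\rmd S$. Adding the two identities, the polymer-stress powers cancel and the claimed balance \eqref{Energy} falls out.

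The delicate step is the last one: one must (i) see that the $\log f_P$ contribution of the microscopic drift is an exact $m$-divergence killed by impermeability $u\cdot\hat n=0$, and (ii) identify the $u\cdot\nabla_m U$ contribution with the polymer-stress power at the wall via Kramers, which leans on symmetry of $\Sigma_P$ and hence on rotational invariance of $U$ (as satisfied by the Hookean and FENE examples). The remainder is routine integration by parts on $\Omega$ and $M(x)$, with impermeability and \eqref{FPbd} providing the requisite vanishing of boundary fluxes.
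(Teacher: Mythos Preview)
Your proposal is correct and follows essentially the same route as the paper's own proof: a kinetic energy identity on $\Omega$ producing the polymer-stress power at $\partial\Omega$ via the tangential balance, a free-energy computation on $\partial\Omega\times M(x)$ using \eqref{FP}--\eqref{FPbd}, and cancellation of the cross term through Kramers. Your organization is in fact somewhat cleaner than the paper's---you assemble the Fisher-type dissipation $-\tfrac{k_B\overline T}{\zeta}\int f_P|\nabla_m(\log f_P+U)|^2$ directly rather than via several intermediate terms, and you make explicit the reliance on symmetry of $\Sigma_P$ (i.e.\ radiality of $U$) which the paper uses tacitly---but the argument is the same in substance. Note that the paper sets $f_b\equiv 0$ for the statement as written; you carried the $\int_\Omega f_b\cdot u$ term but should drop it (or add it to the right-hand side of \eqref{Energy}) to match.
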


\begin{proof}
We set the body force $f_b\equiv 0$ for simplicity. The kinetic energy  for  \eqref{Msys1}--\eqref{Msys2} satisfies 
\begin{align}\nonumber
\frac{1}{2} \frac{\rmd}{\rmd t} \int_{\Omega} |u|^2 dx &= - \int_\Omega \nabla_x u : \Sigma_F dx + \int_{\partial \Omega} u \cdot \Sigma_{F} \cdot \hat{n} \rmd S \\
&= - \int_\Omega \nabla_x u : \Sigma_F dx + \sum_{i=1} ^{d-1} \int_{\partial \Omega} u_{\tau_i} \hat{\tau}_i \cdot \Sigma_{P} \cdot \hat{n} \rmd S - \sum_{i=1} ^{d-1}  \frac{\nu}{2R} \int_{\partial \Omega} | u_{\tau_i} |^2 \rmd S  \label{energy1}
\end{align}
where $ u_{\tau_i} = u\cdot \hat{\tau}_i$ and the last identity comes from (\ref{noflow}). Now we calculate the free energy of $f_L$:
\begin{align} \nonumber
\mathcal{E} &= \int_{\partial \Omega} \int_M f_P\log \left ( \frac{f_P}{N_P  e^{-U} } \right ) dm \rmd S \\
&= \int_{\partial \Omega} \int_M f_P\log f_Pdm\rmd S - N_P  \log N_P  |\partial \Omega | +  \int_{\partial \Omega} \int_{M(x)} U f_P\rmd m \rmd S.
\end{align}
A straightforward computation gives the evolution
\begin{align} \nonumber
\frac{\rmd}{\rmd t} \mathcal{E} &= \int_{\partial \Omega} \int_M \nabla_m f_P\cdot \left ( \left ( \frac{u(x, t) }{R} \left ( m \cdot \left ( - \hat{n} \right ) \right ) - \frac{k_B \overline{T}}{\zeta} \nabla_m U \right ) \right ) dm \rmd S \\  \nonumber
& \qquad - \frac{k_B \overline{T}}{\zeta}  \int_{\partial \Omega}\int \frac{|\nabla_m f_P|^2}{f_P} dm \rmd S +  \frac{\rmd}{\rmd t} \int_{\partial \Omega}  \int_{M(x)} U  f_P\rmd m \rmd S \\ \nonumber
&=  \frac{k_B \overline{T}}{\zeta}  \int_{\partial \Omega}\int \Delta_m U f_Pdm \rmd S - \frac{k_B \overline{T}}{\zeta}  \int_{\partial \Omega} \int \frac{|\nabla_m f_P|^2}{f_P} dm \rmd S \\\nonumber
&\qquad +  \sum_{i=1}^{d-1}  \int_{\partial \Omega} \int \partial_{m_{\tau_i}} f_P( m \cdot \hat{n} ) dm \frac{u_{\tau_i}}{R} \rmd S + \frac{k_B \overline{T}}{\zeta }  \int_{\partial \Omega}\int \Delta_m U f_Pdm \rmd S\\ \nonumber
&\qquad -  \int_{\partial \Omega}\int \frac{k_B \overline{T}}{\zeta} |\nabla_m U|^2 f_Pdm \rmd S + \sum_{i=1} ^{d-1} {\frac{\rho}{k_B \overline{T} } } \int_{\partial \Omega} \frac{u_{\tau_i} }{R} \hat{\tau}_i \cdot \Sigma_{P} \cdot (- \hat{n} ) \rmd S \\
&= \sum_{i=1}^{d-1} \frac{\rho}{k_B \overline{T}} 	 \int_{\partial \Omega} \frac{u_{\tau_i} }{R} \hat{\tau}_i \cdot \Sigma_{P} \cdot (- \hat{n} ) \rmd S - \frac{k_B \overline{T}}{\zeta}  \int_{\partial \Omega}\int f_P\left | \nabla_m (\log f_P+ U ) \right | ^2 dm \rmd S. \label{FEevol}
\end{align}
The tangential polymer boundary stress appears in the evolution \eqref{FEevol} of the free energy.  Therefore, we find that the total energy of the system (kinetic energy of the bulk flow together with the free energy of the polymer layer) satisfies the balance \eqref{Energy}.
\end{proof}

Note that  for fluid models satisfying the following energy condition,
\begin{align} \label{energycond}
\int_{\Omega} \nabla_x u : \Sigma_F dx \ge 0,
\end{align}
 the total energy  \eqref{Energy} is non-increasing in time. This condition holds for a simple Navier-Stokes fluid for which $\Sigma_F$  is given by \eqref{NSstress}, provided that the domain has non-positive boundary curvatures. To see this, 
 note that by incompressibility and the no-flow condition  (\ref{noflow}) we have
 \begin{align} \nonumber
\int_{\Omega} \nabla_x u^\nu : \Sigma_F dx &= \nu\int_{\Omega} |\nabla_x u^\nu|^2 dx + \nu\int_{\Omega} \nabla_x u^\nu :  (\nabla_x u^\nu)^t dx\\ \nonumber
& =\nu\int_{\Omega} |\nabla_x u^\nu|^2 dx + \nu \sum_{i=1} ^{d-1} \int_{\partial \Omega}  (u^\nu\cdot \hat{\tau}_i) \partial_{\tau_i} u^\nu \cdot\hat{n}\   \rmd S\\
&=\nu\int_{\Omega} |\nabla_x u^\nu|^2 dx - \sum_{i, j=1} ^{d-1} \nu\int_{\partial \Omega}  (u^\nu\cdot \hat{\tau}_i ) \kappa_{ij} (u^\nu \cdot \hat{\tau}_j) \rmd S \label{dissNS}
\end{align}
where the boundary curvatures were introduced
\be \label{curvature}
\kappa_{ij} =\hat{\tau}_i \cdot \nabla \hat{n} \cdot \hat{\tau}_j.
\ee  
If $\kappa\leq 0$ (negative semidefinite) at all points on the boundary, then energy condition \eqref{energycond} is automatically satisfied (this is true, for example, the canonical setting of flow on a channel with periodic side-walls for which $\kappa\equiv 0$, or in pipe flow for which the curvature is constant and negative). Otherwise, because of the condition $(A_3)$, if ${1}/{R} > 4 \sup_{x\in\partial\Omega}\kappa$ then we have the control of the curvature term.

\subsection{Macroscopic closure: Navier-Stokes fluid and Hookean dumbbell polymer}

If the solvent is taken to be a incompressible Navier-Stokes fluid and the polymer model is taken to be Hookean, that is, the radius $r$ in (\ref{Mx}) is given by $r = \infty$ and the potential $U$ is chosen to be \eqref{hookeanpot} with $k=1$, i.e. $U(m) = H \left ( \frac{|m|}{R} \right )^2$, we arrive at the closed system under some additional mild assumptions detailed below
\begin{align}\label{NSb}
\partial_t u^\nu + u^\nu \cdot \nabla u^\nu  &= - \nabla p^\nu  + \nu \Delta u^\nu  + f_b\quad\quad \ \ \ \    \text{in} \quad   \Omega \times (0,T), \\
u^\nu|_{t=0} &= u_0  \qquad\qquad\qquad \quad\quad\quad \ \ \ \ \  \text{on} \quad \!\!  \Omega \times \{t=0\},\\ 
\nabla \cdot u^\nu  &= 0  \qquad\qquad\qquad \quad\quad\quad\quad \ \ \ \text{in} \quad   \Omega \times [0,T),\\
u^\nu \cdot \hat{n} &= 0  \qquad\qquad\qquad \quad\quad\quad\quad \ \ \  \text{on} \quad \!  \partial \Omega \times [0,T),\\ 
\left(\partial_t  +  \frac{4H k_B {\overline{T}}}{R \zeta } \right)\left(2(D(u^\nu)  \hat{n})\cdot \hat\tau_i  + \frac{1}{2R}u^\nu\cdot \hat{\tau}_i\right)&= - \frac{ {k_B \overline{T}} N_P }{\rho \nu R} u^\nu \cdot \hat\tau_i  \quad\quad\ \ \  \ \ \ \ \  \ \text{on} \quad \!  \partial \Omega \times (0,T), \label{NSf} \\ 
 &\hspace{48mm}  i=1,  \dots, d-1. \nonumber
\end{align}

To derive this fully macroscopic closure \eqref{NSb}--\eqref{NSf}, first note that the Kramers formula (\ref{Kramers}) for the Hookean dumbbell becomes simply
\begin{align*}
\Sigma_P = 2 H \frac{k_B \overline{T}}{\rho}  \int_M \frac{m}{R} \otimes \frac{m}{R} f_Pdm.
\end{align*}
From the Fokker-Planck equation (\ref{FP}), the evolution of $\Sigma_P$ is derived 
\begin{align*}
\partial_t \left (\Sigma_P \right )_{ij} &= 2H\frac{k_B \overline{T}}{\rho} \int_M  \partial_{m_k}  \left ( \frac{m_i m_j} {R^2} \right ) \frac{u_k ^\nu }{R} (m \cdot (-\hat{n} ) ) f_Pdm\\
&\qquad  - 2H\frac{k_B \overline{T}}{\rho} \frac{k_B \overline{T}}{\zeta} \int_M  \partial_{m_k}  \left ( \frac{m_i m_j} {R^2} \right ) 2H \frac{m_k}{R^2} f_Pdm + 2H\frac{k_B \overline{T}}{\rho} \frac{k_B \overline{T}}{\zeta} \int_M \Delta_m \left ( \frac{m_i m_j} {R^2} \right ) f_Pdm \\
&= \left ( \sum_{\ell=1} ^{d-1} \frac{u^\nu \cdot \hat{\tau}_\ell }{R} \left ( \hat{\tau}_\ell \otimes (-\hat{n} ) \Sigma_P+ \Sigma_P (-\hat{n}) \otimes \hat{\tau}_\ell \right ) - \frac{4H}{R^2} \frac{k_B \overline{T}}{\zeta} \Sigma_P + \frac{4 Hk_B \overline{T}}{R^2 \rho} \frac{k_B \overline{T}}{\zeta} N_P  \mathbb{I} \right )_{ij}
\end{align*}
since $u^\nu \cdot \hat{n}  = 0$. Then, contracting with the appropriate boundary normal and tangent vectors, we have
\begin{align}
\partial_t \left ( \hat{\tau}_i \cdot \Sigma_P \cdot (- \hat{n} ) \right ) &= \frac{u^\nu \cdot \hat{\tau}_i }{R} \left ( (-\hat{n} ) \cdot \Sigma_P \cdot (-\hat{n} ) \right ) - \frac{4H}{R^2} \frac{k_B \overline{T}}{\zeta} \left ( \hat{\tau}_i \cdot \Sigma_P \cdot (- \hat{n} ) \right ) , \label{Evtn}\\
\partial_t \left ( (-\hat{n} ) \cdot  \Sigma_P \cdot (-\hat{n} ) \right ) &= - \frac{4H}{R^2} \frac{k_B \overline{T}}{\zeta} \left ( (-\hat{n} ) \cdot \Sigma_P \cdot (-\hat{n} ) \right ) + \frac{4Hk_B \overline{T}}{R^2 \rho} \frac {k_B \overline{T}}{\zeta} N_P . \label{Evnn}
\end{align}
Note that the evolution of $\left ( (-\hat{n} ) \cdot \Sigma_P \cdot (-\hat{n} ) \right )$ completely decouples and does not depend on the tangential velocity.  Further, equation (\ref{Evnn}) shows that at long times it converges to its equilibrium configuration,
\begin{align}\label{eqval}
\left ( (-\hat{n} ) \cdot \Sigma_P \cdot (-\hat{n} ) \right)_{eq} = \frac{k_B \overline{T} }{\rho} N_P .
\end{align}
For simplicity, we assume that $\left ( (-\hat{n} ) \cdot \Sigma_P \cdot (-\hat{n} ) \right )$ already reached at the equilibrium and therefore can be identified with the constant \eqref{eqval}.  This is non-essential for the macroscopic closure. If so, (\ref{Evtn}) becomes
\begin{align}
\partial_t \left ( \hat{\tau}_i \cdot \Sigma_P \cdot (- \hat{n} ) \right ) = \frac{ k_B \overline{T} N_P}{R\rho}    (u^\nu \cdot \hat{\tau}_i)- \frac{4H}{R^2} \frac{k_B \overline{T}}{\zeta} \left ( \hat{\tau}_i \cdot \Sigma_P\cdot (- \hat{n} ) \right ). \label{ntStrEv}
\end{align}
By (\ref{SE}), (\ref{Msys2}) and (\ref{NSstress}), the above is equivalent to the stated boundary condition of (\ref{NSb})--(\ref{NSf}).

\subsection{Non-dimensionalization}\label{nondim}

Defining a characteristic length, $L$ (say the diameter of  the domain $L={\rm diam}(\Omega)$), characteristic velocity $V$  and convective time scale $T = {L}/{V}$, we write introduce dimensionless variables by taking  $u = V \tilde{u}, t = T \tilde{t}, x = L \tilde{x}$.   Note that the polymer relaxation time is $\lambda=\zeta R^2 / 4 H k_B \ol{T}$.  We may now introduce the non-dimensional Reynolds number $\Re$, Weissenberg number $\Wi$,  the relative stress strength $\St$ and the ratio of polymer to domain size $\alp$ as follows 
\be\label{dimnumbs}
\Re =  \frac{VL}{\nu}, \qquad  \Wi = \frac{\lambda}{T}, \qquad  \St=  \frac{\rho V^2}{k_B \overline{T}  N_P },\qquad  \alp =  \frac{L}{R}.
\ee
For definitions of the physical constants, see the introduction. Also we note that $(A_3)$ translates to $\alp > 4 \kappa$.
With these convensions, the equations for the non-dimensional variables in the bulk become
\begin{align} \nonumber
\partial_{\tilde{t} } \tilde{u}^\nu + \tilde{u}^\nu \cdot \nabla_{\tilde{x} } \tilde{u}^\nu &= - \nabla_{\tilde{x}} \tilde{p}^\nu + \frac{1}{\Re} \Delta_{\tilde{x} } \tilde{u}^\nu + \tilde{f}_b, \\ \nonumber
\nabla_{\tilde{x}} \cdot \tilde{u}^\nu &= 0,
\end{align}
and, on the boundary, the following non-dimensionalized boundary condition holds 
\be
\left(\partial_{\tilde{t} } +\frac{1}{\Wi} \right)\left(2\tilde{D}(\tilde{u}^\nu ) \hat{n} \cdot \hat{\tau}_i + \frac{\alp}{2} \tilde{u}^\nu \cdot \hat{\tau}_i\right) = - \frac{\alp \Re }{ \St} \tilde{u}^\nu \cdot \hat{\tau}_i,\qquad  i=1, \dots, d-1,
\ee
thereby reproducing the system \eqref{NSb}-- \eqref{NSf}. 
Note that, an alternative interpretation of the ratio $ \alp \Re/\St$ appearing in the boundary condition is
\be \label{altnondim}
\frac{\alp \Re }{\St}=  \frac{\alp}{\Wi} \frac{\mu_p}{ \mu_s},   \qquad \mu_s=\rho \nu, \qquad \mu_p=N_P  \lambda k_B \ol{T},
\ee
where involving dynamic viscosities of the solvent $\mu_s$ and polymer $\mu_p$.
The polymer viscosity $\mu_p$ is determined from kinetic theory as 
 (number density)$\times$ (polymer relaxation time)$\times k_B \ol{T}$.
The benefit of the non-dimensionalization \eqref{altnondim} is that it allows one to base a Reynolds number on the total viscosity instead of accounting for the change in $\Re$ due to presence of polymers.\footnote{Occasionally, a fourth parameter known as the elasticity $\textbf{E}:=\Wi/\Re$, is sometimes used. It is the ratio of polymer time scale to viscous time scale; it is thought to be more relevant in many cases. See Figure 4 of \cite{G14} for discussion about parameter regimes for drag reduction for dilute  polymers added to the bulk.
}
 For notational simplicity, we hereon drop the tildes and understand all variables to be dimensionless. That is, we write the system as
\begin{align}
\partial_t u^\nu + u^\nu \cdot \nabla u^\nu  &= - \nabla p^\nu  + \frac{1}{\Re} \Delta u^\nu  + f_b\quad\quad    \text{in} \quad   \Omega \times (0,T), \\
u^\nu|_{t=0} &= u_0  \qquad\qquad\qquad \quad\quad\quad \ \ \ \ \  \text{on} \quad \!\!  \Omega \times \{t=0\},\\ 
\nabla \cdot u^\nu  &= 0  \qquad\qquad\qquad \quad\quad\quad\quad \ \ \ \text{in} \quad   \Omega \times [0,T),\\
u^\nu \cdot \hat{n} &= 0  \qquad\qquad\qquad \quad\quad\quad\quad \ \ \  \text{on} \quad \!  \partial \Omega \times [0,T),\\ 
\left(\partial_t  +\frac{1}{\Wi}\right) \left (2(D(u^\nu)  \hat{n})\cdot \hat\tau_i +\frac{\alp}{2} {u^\nu} \cdot \hat{\tau}_i  \right ) &= -\frac{\alp \Re }{ \St} u^\nu \cdot \hat\tau_i  \quad\quad\quad\quad\quad\quad  \text{on} \quad \!  \partial \Omega \times (0,T),\\
 &\hspace{48mm}  i=1,  \dots, d-1 \nonumber
\end{align}

\begin{prop}\label{nsbalance}
Suitably smooth solutions of \eqref{NSb}-- \eqref{NSf} satisfy the following global energy balance
\begin{align} \nonumber
\frac{\rmd}{\rmd t} &\left(\int_\Omega \frac{1}{2} |u^\nu(x,t)|^2 dx + \sum_{i=1} ^{d-1} \frac{\St}{2 \Re^2 \alp}   \int_{\partial \Omega} |(2D (u^\nu)\hat{n} + \frac{\alp}{2} u^\nu) \cdot \hat{\tau}_i|^2 \rmd S \right)\\ \nonumber
&= -   \frac{1}{\Re}  \int_\Omega |\nabla u^\nu(x,t)|^2 dx+ \int_{\Omega} u^\nu \cdot f_b \rmd x - \sum_{i=1} ^{d-1} \frac{\alp}{2 \Re} \int_{\partial \Omega} |u^\nu \cdot \hat{\tau}_i |^2 dS\\
&\qquad  - \sum_{i=1} ^{d-1} \frac{\St}{\Re^2 \alp \Wi}  \int_{\partial \Omega} |(2D (u^\nu)\hat{n} + \frac{\alp}{2} u^\nu) \cdot \hat{\tau}_i|^2 \rmd S  + \sum_{i,j=1} ^{d-1} \frac{1}{\Re} \int_{\partial \Omega} (u^\nu \cdot \hat{\tau}_i) \kappa_{ij} (u^\nu \cdot \hat{\tau}_j)  \rmd S.\label{energy}
\end{align}
where $\kappa_{ij} :=\hat \tau_i \cdot \nabla \hat n\cdot \hat \tau_j$ are the boundary curvatures.
\end{prop}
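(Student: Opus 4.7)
The plan is a direct energy estimate: test the momentum equation against $u^\nu$ and then exploit the dynamic boundary condition to convert the wall tangential stress-power into a total time derivative plus a dissipative contribution, thereby producing the augmented free energy on the left-hand side of \eqref{energy}.

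First, I would take the $L^2(\Omega)$ inner product of the momentum equation with $u^\nu$. Incompressibility together with the non-penetration condition $u^\nu\cdot\hat n = 0$ eliminates the pressure and convective boundary fluxes. Writing $\Delta u^\nu = 2\,\nabla\cdot D(u^\nu)$ and integrating by parts exactly as in \eqref{dissNS} produces
\[
\frac{1}{2}\frac{\rmd}{\rmd t}\int_\Omega |u^\nu|^2\,\rmd x = -\frac{1}{\Re}\int_\Omega |\nabla u^\nu|^2\,\rmd x + \frac{1}{\Re}\sum_{i,j}\int_{\partial\Omega} v_i \kappa_{ij} v_j\,\rmd S + \frac{1}{\Re}\sum_i \int_{\partial\Omega} v_i S_i\,\rmd S + \int_\Omega u^\nu\cdot f_b\,\rmd x,
\]
where I abbreviate $v_i := u^\nu\cdot\hat\tau_i$ and $S_i := 2(D(u^\nu)\hat n)\cdot\hat\tau_i$. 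The bulk dissipation, the curvature boundary term, and the body-force contribution already match their counterparts in \eqref{energy}; the remaining work is to process the wall stress-power $\frac{1}{\Re}\sum_i \int_{\partial\Omega} v_i S_i\,\rmd S$.

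For that, introduce the bracketed quantity $T_i := S_i + (\alp/2)v_i$ appearing in the dynamic boundary condition and split $v_i S_i = v_i T_i - (\alp/2) v_i^2$. The second piece yields exactly the Navier-type tangential dissipation $-\sum_i \frac{\alp}{2\Re}\int_{\partial\Omega} |u^\nu\cdot\hat\tau_i|^2\,\rmd S$ in \eqref{energy}. For the first piece, I solve the boundary condition as $v_i = -\frac{\St}{\alp \Re}\bigl(\partial_t + \tfrac{1}{\Wi}\bigr) T_i$ and substitute to obtain
\[
\frac{1}{\Re}\int_{\partial\Omega} v_i T_i\,\rmd S = -\frac{\St}{\alp \Re^2}\int_{\partial\Omega} T_i\bigl(\partial_t T_i + \tfrac{1}{\Wi}T_i\bigr)\,\rmd S = -\frac{\St}{2\Re^2 \alp}\frac{\rmd}{\rmd t}\int_{\partial\Omega} T_i^2\,\rmd S - \frac{\St}{\Re^2 \alp \Wi}\int_{\partial\Omega} T_i^2\,\rmd S,
\]
where I use $T_i\,\partial_t T_i = \tfrac{1}{2}\partial_t(T_i^2)$. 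Moving the total time derivative to the left-hand side assembles the augmented energy $\int_\Omega \tfrac{1}{2}|u^\nu|^2 + \frac{\St}{2\Re^2\alp}\sum_i \int_{\partial\Omega}|T_i|^2$, while the leftover $-\frac{\St}{\Re^2\alp\Wi}\int T_i^2$ contribution appears as a genuine polymer-relaxation dissipation on the right; summation over $i=1,\dots,d-1$ reproduces \eqref{energy} exactly.

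The computation is essentially routine; the only conceptual step is the observation that the dynamic boundary condition lets one identify the wall stress-power $v_i S_i$ with a free-energy time derivative plus a positive dissipative sink, which is the mechanism by which polymer attachment supplies an a priori bound beyond what bulk viscosity alone would furnish. All integrations by parts and the exchange of $\partial_t$ with boundary integration are justified by the smoothness of the strong solutions established in Theorem \ref{thmglobal}.
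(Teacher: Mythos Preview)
Your proposal is correct and follows essentially the same approach as the paper. The paper's brief proof invokes the polymer stress $\hat\tau_i\cdot\Sigma_P\cdot\hat n$ via equation \eqref{ntStrEv}, whereas you work directly with the macroscopic quantity $T_i = S_i + \tfrac{\alp}{2}v_i$; since the stress balance identifies these up to a factor of $\Re^{-1}$, the two computations are the same in content.
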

\begin{proof}
The balance \eqref{energy} follows from \eqref{energy1} together with \eqref{dissNS} and
and from (\ref{ntStrEv}) in the form
\begin{align}
\frac{1}{2}\frac{\rmd}{ \rmd t} \int_{\partial \Omega} \left ( \hat{\tau}_i \cdot \Sigma_P \cdot \hat{n} \right )^2 \rmd S = - \frac{\alp \Re }{\St} \int_{\partial \Omega} (u^\nu \cdot \hat{\tau}_i ) (\hat{\tau}_i \cdot \Sigma_P \cdot \hat{n}) \rmd S - \frac{1}{\Wi} \int_{\partial \Omega} \left ( \hat{\tau}_i \cdot \Sigma_P \cdot \hat{n} \right ) ^2 \rmd S.
\end{align}
Substituting and noting that
$ \hat{\tau}_i \cdot \Sigma_F \cdot \hat{n} = \Re^{-1}(2D (u^\nu)\hat{n}) \cdot \hat\tau_i$
completes the proof.
\end{proof}

\begin{rem}[Navier-Stokes -- End-Functionalized Polymer system in two-dimensions]
Of course, one may always regard the system (\ref{NSbintro})--(\ref{NSfintro}) in $2d$ as simply a mathematical analogue of the $3d$ situation. However, there are physical regimes in which the two-dimensional equations should appear as the correct effective dynamics.  On immediate difficulty in doing so is, as discussed in Footnote 1 of the introduction, the validity of Stokes-Einstein relation (\ref{SE}) in two dimensions is not well established. On the other hand, we argue now that, if the spring potential is Hookean, then we may regard the system (\ref{NSbintro})--(\ref{NSfintro}) in $2d$ as a representation of the fluid-polymer system in $3d$ which is either confined in a large aspect ratio domain or homogeneous in one direction. To understand this,  note that although we think of two-dimensional flow, physically fluids occupy three-dimensional space. If the domain is taken to be $\Omega = \{ (x_1, x_2, x_3) \in \Omega_P \times I \}$, then we argue that the flow is well described by two dimensional dynamics if either (i) $|I|$ is much smaller than the scale of $\Omega_P$, or (ii) $I = \mathbb{T}^1$ and the flow is homogeneous in $x_3$ direction. In the case (i), the multi-scale assumption (\ref{Multiscale}) should be interpreted as that $R$ is also much smaller than the scale of $|I|$. In both cases, (\ref{FP})--(\ref{FPbd}) can be formally rewritten in terms of 
\be \nonumber
f_P^* (x^*, t, m^* ) = \int f_Pdm_3,
\ee
where $x^* = (x_1, x_2)$ and $m^* = (m_1, m_2)$.  Note that $f_L^*$ is independent of $x_3$ since (i) the system already ignores $x_3$ dependence or (ii) the system is homogeneous in $x_3$ direction, by the following:
\begin{align} \nonumber
\partial_t f_P^* + \nabla_{m^*} \cdot \left (   \frac{u(x, t) }{R} \left ( m^* \cdot \left ( - \hat{n} \right ) \right ) f_P^* - \frac{k_B \overline{T} }{\zeta} \int \nabla_{m^*} U  f_Pdm_3 \right )&=  \frac{k_B \overline{T}}{\zeta} \Delta_{m^*}  f_P^* \quad \text{in} \quad [0,T]\times M^*(x), \\ \nonumber
\hat{n}(x) \cdot \nabla_m f_P^* &= 0 \hspace{22mm} \text{on} \quad [0,T]\times \partial M^*(x),
\end{align}
where $M^*(x) = \{ (m_1, m_2) : (m_1, m_2, m_3) \in M(x) \}$, since $u_3 = 0$ and $\hat{n} = (n_1, n_2, 0)$. Crucially, in the Hookean dumbbell case, we have $\nabla_{m^*} U = H m^*$ which is manifestly independent of $m_3$.  Thus,
\be \nonumber
\int \nabla_{m^*} U f_Pdm_3 = \nabla_{m^*} U f_P^*
\ee
and consequently we can replace the boundary equation (\ref{FP})--(\ref{FPbd}) with the above effective $2d$ ones.
\end{rem}

\begin{rem}[Recovery of no-slip boundary conditions]\label{solventLayer}
Note that \eqref{NSf} can be expressed as
\be \nonumber
2 \left (\partial_t + \frac{1}{\Wi} \right ) (2 D(u^\nu) \cdot \hat{n} ) \cdot \hat{\tau}_i + \alp \left ( \partial_t u^\nu + \frac{1}{\Wi} \left ( 1 + \frac{2 \mu_p}{\mu_s } \right )u^\nu \right ) \cdot \hat{\tau}_i = 0.
\ee
If the polymer is taken much smaller than the domain so that the parameter $\alp =L/R$ is taken to infinity with $\Wi$ and $ \frac{\mu_p}{\mu_s }$ fixed, then the formal $\alp \rightarrow \infty$ limit shows that $u^\nu$ converges to the no-slip boundary conditions (if $u_0|_{\partial\Omega}=0$, otherwise they converge exponentially fast (in time) to no-slip).  
\end{rem}

\section{Global existence of strong solutions in $2d$}

It is convenient  for our analysis to express \eqref{NSb}--\eqref{NSf} in terms of the vorticity $\omega=\nabla^\perp \cdot u$ where $\nabla^\perp = (-\partial_2, \partial_1)$.  
By Lemma 2.1 of \cite{Clopeau98}, provided that $u\in H^2(\Omega)$ and $u\cdot \hat{n}=0$ on $\partial\Omega$, then
\be\label{vortident}
 \omega|_{\partial \Omega} = 2(D (u)\hat{n}) \cdot \hat\tau|_{\partial \Omega} + 2\kappa (u\cdot \hat{\tau})|_{\partial \Omega}.
\ee
Thus, the vorticity satisfies the following closed system
\begin{align}\label{vortb}
\partial_t \omega^\nu + u^\nu \cdot \nabla \omega^\nu  &=   \frac{1}{\Re} \Delta \omega^\nu  + \nabla^\perp\cdot f_b   \quad\quad\quad\quad\quad\quad \quad\quad \ \ \ \ \  \text{in} \quad   \Omega \times (0,T),\\
\omega^\nu|_{t=0} &= \omega_0   \qquad\qquad\qquad\qquad\quad\quad\quad\quad\quad\quad  \ \ \  \text{on} \quad   \Omega \times \{0\},\\ 
\label{vortf}
\left(\partial_t  +\frac{1}{\Wi}\right) \omega^\nu &=\left (2\kappa - \frac{\alp}{2} \right ) \partial_t (u^\nu\cdot \hat{\tau}) -\left(\frac{\alp \Re}{ \St} -   \frac{2\kappa - \frac{\alp}{2} }{\Wi}\right) u^\nu \cdot \hat{\tau} \ \ \quad \text{on} \quad  \partial \Omega \times (0,T),
\end{align} 
where, for each fixed time, the velocity $u^\nu$ is recovered from the vorticity using the Biot-Savart law:
\be
u^\nu = K_\Omega[\omega^\nu].
\ee
Here, $K_\Omega$ is an integral operator of order $-1$ with a kernel given by $\nabla^\perp G_\Omega$, where $G_\Omega$ is the Green's function for Laplacian on $\Omega$ with Dirichlet boundary conditions.
More specifically, for any $v \in W^{-1,p}(\Omega)$, the Biot-Savart law says
$K_{\Omega} [v] = \nabla^\perp \psi,$
where $\psi$ is the unique solution of
\begin{align}
\Delta \psi &= v, \quad \quad \!\! \text{in} \quad \Omega,\\
  \psi &= 0 \quad \quad \text{on} \quad \partial \Omega.
\end{align}
By standard elliptic regularity, it follows that for $k\geq 0$ and $p\in (1,\infty)$ if $v\in W^{k,p}(\Omega)$, then $K_\Omega[f]$ satisfies
\be\label{BSbd}
\|K_\Omega[v]\|_{W^{k,p}(\Omega)}\leq C \|v\|_{W^{k-1,p}(\Omega)}.
\ee
For details see e.g. Chapter III \S 4 of  \cite{BF12} and Theorem 1 of \cite{BSBD}. 

\vspace{2mm}

We now prove the following theorem.
\begin{thm}[Global Well-Posedness] \label{thmglobal}
Suppose $\omega_0 \in H^2 (\Omega) \cap C(\bar{\Omega})$. For any $T>0$, there exists a unique 
\be\nonumber
\omega^\nu \in  C(0,T; H^1 (\Omega)) \cap C([0,T]\times\bar{\Omega}) \cap H^1(0,T; L^2 (\Omega))\cap L^2(0,T; H^2(\Omega))
\ee
 solving the system (\ref{vortb}) - (\ref{vortf}) where the boundary condition is understood in the sense of
\begin{align}\label{bcpw}
\omega^\nu (t) = \left (2 \kappa - \frac{\alp}{2} \right )  u^\nu (t) \cdot \hat{\tau} + e^{-\frac{1}{\Wi} t } \left ( \omega_0 - \left (2 \kappa - \frac{\alp}{2} \right )  u_0 \cdot \hat{\tau}  \right ) -\frac{\alp\Re}{\St}  \int_0 ^t e^{-\frac{1}{\Wi} (t-s) } u^\nu (s) \cdot \hat{\tau} ds
\end{align}
holding pointwise in $(t,x)\in[0,T]\times \partial \Omega $.
\end{thm}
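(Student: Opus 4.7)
My strategy is to construct the unique strong solution as a fixed point of a short-time iteration scheme, then extend it globally via \emph{a priori} estimates provided by the energy balance of Proposition \ref{nsbalance} together with parabolic maximal regularity. The essential reason to expect global existence in $2d$ is that the Biot--Savart operator $K_\Omega$ is smoothing of order one, so the nonlocal self-coupling of the Dirichlet data \eqref{bcpw} is subcritical. As a preliminary reformulation, define the boundary ``polymer stress'' $\phi(t,x) := \omega^\nu(t,x) - (2\kappa(x)-\alpha/2)u^\nu(t,x)\cdot\hat\tau(x)$ on $\partial\Omega$; then \eqref{bcpw} is precisely the Duhamel representation of the boundary ODE $\partial_t\phi + \phi/\Wi = -(\alpha\Re/\St)\,u^\nu\cdot\hat\tau$ with $\phi|_{t=0} = \omega_0 - (2\kappa-\alpha/2)u_0\cdot\hat\tau$, and the full problem becomes an advection--diffusion PDE for $\omega^\nu$ with inhomogeneous Dirichlet data $g := \phi + (2\kappa-\alpha/2)u^\nu\cdot\hat\tau$, coupled to this ODE and closed by $u^\nu = K_\Omega[\omega^\nu]$.

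For local existence I would set up an iteration $\Psi : v \mapsto \tilde{v}$ on a closed ball in $C([0,T_0]; H^1(\Omega)) \cap L^2(0,T_0; H^2(\Omega))$ of divergence-free, non-penetrating vector fields. Given $v$, solve the boundary ODE explicitly for $\phi[v]$, then solve the linear parabolic Dirichlet problem
\begin{equation}\nonumber
\partial_t\omega + v\cdot\nabla\omega - \tfrac{1}{\Re}\Delta\omega = \nabla^\perp\!\cdot f_b,\quad \omega|_{\partial\Omega} = \phi[v]+(2\kappa-\alpha/2)v\cdot\hat\tau,\quad \omega|_{t=0}=\omega_0,
\end{equation}
in the maximal-regularity class $C(H^1)\cap L^2(H^2)\cap H^1(L^2)$ via the lifting $\omega = \omega_b + \tilde\omega$ where $\omega_b$ is the harmonic extension of $g$ and $\tilde\omega\in H^1_0$, invoking classical parabolic theory (Lions--Magenes); then set $\tilde v := K_\Omega[\omega]$. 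A standard contraction argument on sufficiently small $T_0$ produces a unique local solution.

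For global extension, Proposition \ref{nsbalance} furnishes $u^\nu \in L^\infty_t L^2_x \cap L^2_t H^1_x$, $\phi \in L^\infty_t L^2(\partial\Omega)$, and $u^\nu\cdot\hat\tau \in L^2_t L^2(\partial\Omega)$. To propagate enstrophy I would use the same lifting $\omega^\nu = \omega_b + \tilde\omega$, converting the vorticity equation into an inhomogeneous heat equation for $\tilde\omega \in H^1_0$; the elliptic bound $\|\omega_b\|_{H^s(\Omega)} \lesssim \|g\|_{H^{s-1/2}(\partial\Omega)}$, combined with the trace inequality and the Biot--Savart estimate \eqref{BSbd}, closes a Gronwall-type inequality for $\|\omega^\nu\|_{L^2}^2$, using the ODE for $\phi$ to absorb the trace of $u^\nu$ back in terms of the bulk vorticity. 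Once $\omega^\nu \in L^\infty_t L^2_x \cap L^2_t H^1_x$ is established, differentiating the equation and bootstrapping with $u^\nu \in L^\infty_t H^1_x \cap L^2_t H^2_x$ (by Biot--Savart) upgrades $\omega^\nu$ to the asserted class. Pointwise continuity on $[0,T]\times\bar\Omega$ then follows by $2d$ Sobolev embedding together with standard interior parabolic H\"older regularity, and uniqueness is an energy estimate on the difference of two solutions.

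The main technical obstacle is the self-referential nature of the boundary condition: $\omega^\nu|_{\partial\Omega}$ depends through $u^\nu\cdot\hat\tau$ on a nonlocal functional of $\omega^\nu$ itself, so closing the enstrophy estimate requires delicately balancing the trace norm of $u^\nu\cdot\hat\tau$ against $\|\omega^\nu\|_{L^2(\Omega)}$. The most subtle point is processing the time derivative $\partial_t g$, which \emph{a priori} involves $\partial_t u^\nu|_{\partial\Omega}$ that is not directly controlled in our class; this must be handled by integrating by parts in time and substituting from the momentum equation rather than by a direct estimate. The memory integral in \eqref{bcpw}, though nonlocal in time, is smoothing via Duhamel and poses no essential additional difficulty.
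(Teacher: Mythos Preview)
Your local-existence scheme is close in spirit to the paper's: both set up an iteration in which one solves a linear advection--diffusion problem with Dirichlet data determined by the previous iterate through the integrated boundary formula \eqref{bcpw}, using a lifting to reduce to a homogeneous-boundary problem. The paper iterates on vorticity in a space $\mathcal{X}'=C_tH^1\cap H^1_tL^2$ and uses a smooth cutoff extension $N_\Omega[\omega]$ supported in a tubular neighborhood of width $\rho=T^\beta$ (rather than the harmonic extension you propose); the shrinking support is what furnishes the smallness needed for contraction. Your harmonic lifting should also work for short time, so here the difference is mostly technical.

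The global argument is where your plan diverges and has a genuine gap. The paper's central a priori estimate is \emph{not} an enstrophy bound but an $L^\infty$ vorticity bound obtained by the maximum principle (Lemma~\ref{lem1}): one bounds $\|\omega^\nu\|_{L^\infty(\partial\Omega)}$ pointwise from \eqref{bcpw} in terms of $\|u^\nu\cdot\hat\tau\|_{L^\infty(\partial\Omega)}$, interpolates the latter between $\|u^\nu\|_{L^2}$ (controlled by energy) and $\|\omega^\nu\|_{L^\infty(\Omega)}$, and closes using the parabolic maximum principle for \eqref{vortb}. This gives $\omega^\nu\in C([0,T]\times\bar\Omega)$ directly and supplies $u^\nu\in L^\infty_{t,x}$, which the paper then uses freely in the $H^1$ estimate (Lemma~\ref{lem2}). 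You omit this step entirely. While your enstrophy-then-bootstrap route might be closable for the Sobolev regularity using Ladyzhenskaya inequalities in place of $L^\infty$ bounds, it cannot deliver the asserted $C([0,T]\times\bar\Omega)$ regularity: in two dimensions $H^1(\Omega)\not\hookrightarrow C(\bar\Omega)$, so $C_tH^1\cap L^2_tH^2\cap H^1_tL^2$ does not embed into $C([0,T]\times\bar\Omega)$, and ``interior parabolic H\"older regularity'' says nothing about continuity up to the boundary. That boundary continuity is exactly what the maximum-principle argument provides.

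A secondary issue: by writing $\omega^\nu=\omega_b+\tilde\omega$ and deriving an equation for $\tilde\omega$, you introduce the forcing term $\partial_t\omega_b$, which you correctly flag as delicate. But this complication is self-inflicted --- if you multiply the original vorticity equation by $\omega^\nu$ and convert the boundary flux $\nu\int_{\partial\Omega}g\,\partial_n\omega^\nu\,dS$ to a bulk term via any extension $G$ of $g$ with $\Delta G=0$, no time derivative of the boundary data ever appears at the $L^2$ level. The $\partial_t(u^\nu\cdot\hat\tau)$ term only enters at the $H^1$ level (multiplying by $-\Delta\omega^\nu$), and there the paper handles it by writing $\partial_t u^\nu = K_\Omega[\partial_t\omega^\nu]=K_\Omega[-\nabla\!\cdot(u^\nu\omega^\nu)+\Re^{-1}\Delta\omega^\nu]$ and exploiting the smoothing of $K_\Omega$ together with the $L^\infty$ bounds from Lemma~\ref{lem1}.
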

For simplicity of notation, we denote $\beta = 2\kappa - \frac{\alp}{2}$.

\subsection{A priori estimates}

First, the energy balance for the Navier-Stokes -- End-Functionalized system immediately gives some apriori control on the kinetic energy and viscous energy dissipation.   We note that this control does not depend on the particular model of the spring potential $U$ used in the model.

\begin{lemma}[Energy Bounds]\label{lem0}
For any $T>0$, we have
\begin{align} \nonumber
&\| u^\nu \|_{L^\infty ( 0. T; L^2({\Omega} )) }^2 + \frac{1}{\Re} \| u^\nu \|_{L^2 (0; T; H^1 (\Omega) ) } ^2 + \frac{\alp}{4 \Re} \| u^\nu \|_{L^2 (0. T; L^2 (\partial \Omega)) } ^2  \\ 
&\le e^T \left ( \| u_0 \|_{L^2 (\Omega) } ^2 + \| f_b \|_{L^2 (0, T; L^2 (\Omega) } ^2 + \frac{\St}{\Re ^2 \alp} \left ( \| 2D (u_0 ^\nu ) \hat{n}|_{L^2 (\partial \Omega) } ^2 + \frac{\alp^2}{4} \| u_0 \|_{L^2 (\partial \Omega ) } ^2 \right ) \right ). \label{energybnd}
\end{align}
\end{lemma}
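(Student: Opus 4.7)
The plan is to start from the global energy balance of Proposition~\ref{nsbalance}, integrate in time, control the two right-hand side contributions (the body force and the sign-indefinite boundary curvature term), and then close the argument by Gr\"onwall's inequality. Introduce the total energy functional
\[
E(t) := \frac{1}{2}\int_\Omega |u^\nu(t)|^2 \rmd x + \sum_{i=1}^{d-1}\frac{\St}{2\Re^2 \alp}\int_{\partial\Omega} \bigl| (2D(u^\nu) \hat n + \tfrac{\alp}{2} u^\nu) \cdot \hat\tau_i \bigr|^2 \rmd S,
\]
so that identity \eqref{energy} takes the schematic form $\tfrac{\rmd}{\rmd t} E(t) + \mathcal{D}(t) = \mathcal{R}(t)$, with $\mathcal{D}$ the sum of the three manifestly non-negative dissipation terms (bulk viscous, tangential slip, and boundary stress relaxation) and $\mathcal{R}$ the body force plus curvature terms.

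First I would handle the forcing contribution via Cauchy--Schwarz and Young's inequality,
\[
\left| \int_\Omega u^\nu \cdot f_b \, \rmd x \right| \le \tfrac12 \|u^\nu\|_{L^2(\Omega)}^2 + \tfrac12 \|f_b\|_{L^2(\Omega)}^2 \le E(t) + \tfrac12 \|f_b\|_{L^2(\Omega)}^2.
\]
Next, since by assumption $(A_3)$ (in non-dimensional form $\alp > 4\sup_{\partial\Omega}\kappa$) we have $\sup_{\partial\Omega} \kappa < \alp/4$, the curvature term can be pointwise dominated on the boundary by
\[
\sum_{i,j=1}^{d-1} \frac{1}{\Re}\int_{\partial\Omega} (u^\nu\cdot\hat\tau_i)\, \kappa_{ij}\, (u^\nu\cdot\hat\tau_j) \, \rmd S \le \frac{\alp}{4\Re}\sum_{i=1}^{d-1}\|u^\nu\cdot\hat\tau_i\|_{L^2(\partial\Omega)}^2.
\]
This absorbs cleanly into half of the tangential dissipation $\tfrac{\alp}{2\Re}\sum_i\|u^\nu\cdot\hat\tau_i\|_{L^2(\partial\Omega)}^2$ on the left, leaving a net boundary dissipation of at least $\tfrac{\alp}{4\Re}\sum_i \|u^\nu\cdot\hat\tau_i\|_{L^2(\partial\Omega)}^2$, which is exactly the quantity that appears in \eqref{energybnd}.

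Combining these estimates yields a differential inequality of the form
\[
\tfrac{\rmd}{\rmd t} E(t) + \tfrac{1}{\Re}\|\nabla u^\nu\|_{L^2(\Omega)}^2 + \tfrac{\alp}{4\Re}\sum_{i=1}^{d-1}\|u^\nu\cdot\hat\tau_i\|_{L^2(\partial\Omega)}^2 \le E(t) + \tfrac12 \|f_b(t)\|_{L^2(\Omega)}^2,
\]
dropping the non-negative stress relaxation dissipation for simplicity. Integrating in time and applying the Gr\"onwall lemma produces
\[
E(t) + \int_0^t \Bigl( \tfrac{1}{\Re}\|\nabla u^\nu\|_{L^2}^2 + \tfrac{\alp}{4\Re}\sum_i\|u^\nu\cdot\hat\tau_i\|_{L^2(\partial\Omega)}^2 \Bigr) \rmd s \le e^t \Bigl( E(0) + \tfrac12 \|f_b\|_{L^2(0,t;L^2)}^2 \Bigr),
\]
and bounding $E(0)$ explicitly by the stated initial data quantities (using $|a+b|^2\le 2|a|^2+2|b|^2$ on the boundary term and absorbing the resulting harmless numerical constants) gives \eqref{energybnd}. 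The only subtlety in the argument is the treatment of the indefinite curvature term, which is precisely what hypothesis $(A_3)$ was formulated to accommodate; everything else is a standard Gr\"onwall closure.
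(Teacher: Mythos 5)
Your proposal is correct and follows essentially the same route as the paper, whose proof is only a sketch: invoke the energy identity \eqref{energy} of Proposition \ref{nsbalance}, use $(A_3)$ (i.e.\ $\alp>4\max_{\partial\Omega}\kappa$) to absorb the indefinite curvature term into half of the tangential boundary dissipation, and close with Young's inequality on the forcing plus Gr\"onwall, bounding $E(0)$ by $|a+b|^2\le 2|a|^2+2|b|^2$. You have simply filled in the details the paper leaves implicit, at the same level of care regarding harmless numerical constants.
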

\begin{proof}
Recall the balance \eqref{energy} with $\alp > 4  \max_{x\in \partial \Omega}  \kappa$, which is consistent with our assumption $(A_3)$. For general spring potential $U$, we start from \eqref{Energy}.
\end{proof}

The system \eqref{NSb}--\eqref{NSf} also admits an apriori estimate for the vorticity in $L^\infty$ spacetime, at least within the Hookean dumbbell closure.  The proof of this fact follows essentially from the argument to prove Lemma 3 of \cite{Filho05}
which holds for Navier-friction boundary conditions.  Remarkably, the $L^\infty$ bound on vorticity is insensitive to high Reynolds number --  this is a consequence of the Stokes-Einstein relation \eqref{SE} for the bead-friction coefficient of the polymer  which is reflected in the ratio $\alp\Re\Wi/\St$ being independent of Reynolds $\Re$ if the latter is varied either by changing solvent viscosity $\nu$ or characteristic velocity $V$. This will be discussed at length in Remark \ref{restinvlim}.

\begin{lemma}[Vorticity Bound]\label{lem1}
For any $T>0$, there exists $C_2>0$ defined by \eqref{unifbound} such that
\be\label{vorticitybnd}
\| \omega^\nu \|_{C ( [0, T] \times \bar{\Omega} ) } \leq C_2 .
\ee
\end{lemma}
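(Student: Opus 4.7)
The strategy combines a parabolic maximum principle for the interior vorticity equation \eqref{vortb} with direct pointwise control of the boundary trace via the representation \eqref{bcpw}, and then closes a bootstrap using two-dimensional Biot--Savart. Since the drift $u^\nu$ is divergence-free and satisfies $u^\nu \cdot \hat{n} = 0$ on $\partial\Omega$, the classical parabolic maximum principle applied to \eqref{vortb} (treating \eqref{bcpw} as Dirichlet data) yields
\[
\|\omega^\nu\|_{L^\infty([0,T]\times\bar{\Omega})} \leq \|\omega_0\|_{L^\infty(\Omega)} + \|\omega^\nu\|_{L^\infty([0,T]\times\partial\Omega)} + T\|\nabla^\perp\cdot f_b\|_{L^\infty([0,T]\times\Omega)},
\]
reducing the task to bounding the trace.

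The boundary representation \eqref{bcpw}, together with $\int_0^t e^{-(t-s)/\Wi}\,ds \leq \Wi$, furnishes
\[
\|\omega^\nu(t)\|_{L^\infty(\partial\Omega)} \leq C_0 + \Bigl(|\beta| + \tfrac{\alp\Re\Wi}{\St}\Bigr)\sup_{s\in[0,t]}\|u^\nu(s)\|_{L^\infty(\partial\Omega)},
\]
with $C_0$ depending only on initial data. Crucially, the Stokes--Einstein relation \eqref{SE} makes the prefactor $\alp\Re\Wi/\St$ independent of $\nu$, yielding a viscosity-uniform constant. I would then use \eqref{BSbd} and the Sobolev embedding $W^{1,4}(\Omega)\hookrightarrow C(\bar{\Omega})$ (valid in $d=2$) to get $\|u^\nu\|_{L^\infty(\bar{\Omega})} \leq C\|\omega^\nu\|_{L^4(\Omega)}$, and then interpolate $\|\omega^\nu\|_{L^4} \leq \|\omega^\nu\|_{L^\infty}^{1/2}\|\omega^\nu\|_{L^2}^{1/2}$. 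Writing $M(t):=\|\omega^\nu\|_{L^\infty([0,t]\times\bar{\Omega})}$, the chain of inequalities above collapses to a quadratic
\[
M(t) \leq C_1 + C_2\, M(t)^{1/2}\sup_{s\in[0,t]}\|\omega^\nu(s)\|_{L^2(\Omega)}^{1/2},
\]
which closes in $M(t)$ as soon as the enstrophy $\|\omega^\nu\|_{L^\infty(0,T;L^2(\Omega))}$ is under control.

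The principal obstacle is therefore to upgrade the $L^2_tL^2_x$ enstrophy bound implicit in Lemma \ref{lem0} to a $\nu$-uniform $L^\infty_tL^2_x$ bound. My plan is to split $\omega^\nu = g_{\mathrm{ext}} + w$, where $g_{\mathrm{ext}}$ is a sufficiently regular extension into $\Omega$ of the boundary datum from \eqref{bcpw}, and $w$ has vanishing Dirichlet trace. A standard $L^2$ energy estimate for the transport--diffusion equation satisfied by $w$ then eliminates the ill-behaved boundary flux $\int_{\partial\Omega}\omega^\nu\partial_n\omega^\nu\,dS$ that obstructs a direct enstrophy balance. Both $g_{\mathrm{ext}}$ and $\partial_t g_{\mathrm{ext}}$ can be controlled in $H^1(\Omega)$ via trace theorems by $\|u^\nu\|_{H^1}$, and hence by $\|\omega^\nu\|_{L^2}$ through \eqref{BSbd}; a Gronwall argument then closes the inequality for $\|\omega^\nu(t)\|_{L^2}^2$, with $\nu$-uniform constants provided one again tracks the Stokes--Einstein cancellation through the coefficients in \eqref{bcpw}. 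Feeding this enstrophy bound into the quadratic inequality for $M(t)$ produces the constant $C_2$ in \eqref{vorticitybnd}.
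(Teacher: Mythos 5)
Your outer skeleton matches the paper's: interior maximum principle for \eqref{vortb}, the Duhamel representation \eqref{bcpw} with $\int_0^t e^{-(t-s)/\Wi}\,ds\le \Wi$ to reduce the boundary trace to $\sup_t\|u^\nu\cdot\hat\tau\|_{L^\infty(\partial\Omega)}$, and Biot--Savart to convert velocity control into vorticity control. But the way you close the loop contains a genuine gap. You estimate $\|u^\nu\|_{L^\infty}\lesssim \|\omega^\nu\|_{L^4}\lesssim \|\omega^\nu\|_{L^\infty}^{1/2}\|\omega^\nu\|_{L^2}^{1/2}$, which only closes if you have a finite (and, for the later application to the inviscid limit, $\Re$-uniform) bound on $\|\omega^\nu\|_{L^\infty(0,T;L^2(\Omega))}$. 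No such bound is available at this stage: Lemma \ref{lem0} controls only $\|u^\nu\|_{L^\infty_tL^2}$ and $\Re^{-1}\|u^\nu\|_{L^2_tH^1}^2$, so even the time-integrated enstrophy is not $\Re$-uniform. Your proposed upgrade --- lift the boundary datum by $g_{\mathrm{ext}}$, do an $L^2$ energy estimate for $w=\omega^\nu-g_{\mathrm{ext}}$, and Gronwall --- does not obviously close: to bound $\|\partial_t g_{\mathrm{ext}}\|_{L^2}$ you must express $\partial_t u^\nu=K_\Omega[-\nabla\cdot(u^\nu\omega^\nu)+\Re^{-1}\Delta\omega^\nu]$, and the nonlinear flux forces an estimate of $\|u^\nu\omega^\nu\|_{L^2}$ (or of $\|\nabla\omega^\nu\|_{L^2}$ without the $\Re^{-1}$ weight), i.e.\ integrability of $\omega^\nu$ strictly beyond $L^\infty_tL^2$ --- which is exactly the quantity you are trying to produce; the transport term $\int_\Omega u^\nu\cdot\nabla g_{\mathrm{ext}}\,w\,dx$ has the same defect, and absorbing the resulting $\|\nabla w\|_{L^2}$ terms into the dissipation costs factors of $\Re$. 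So the crux of your argument is deferred to a step that is at least as hard as the lemma itself and is not established.

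The paper avoids the enstrophy question entirely: it interpolates the velocity sup-norm directly between the kinetic energy and $W^{1,p}$, namely $\|u^\nu\|_{C(\bar\Omega)}\le \|u^\nu\|_{L^2}^{\theta}\|u^\nu\|_{W^{1,p}}^{1-\theta}\le C\|u^\nu\|_{L^2}^\theta\|\omega^\nu\|_{L^p}^{1-\theta}$ with $\theta=\frac{p-2}{2(p-1)}$, applies Young's inequality to split this as $C\sqrt{C_1}\,\epsilon^{-1}+\epsilon\,\|\omega^\nu\|_{L^\infty_tL^\infty}$ after letting $p\to\infty$, and then chooses $\epsilon=\frac12\bigl(2\alp+\frac{\alp\Re\Wi}{\St}\bigr)^{-1}$ so that the vorticity sup-norm generated by the boundary representation is absorbed on the left. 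Only the energy bound of Lemma \ref{lem0} is used, and the resulting constant is exactly \eqref{unifbound}, uniform in $\Re$ through the Stokes--Einstein cancellation you correctly identified. If you want to salvage your route, you would need to first prove the $L^\infty_tL^2$ vorticity bound independently (with constants whose $\Re$-dependence you track), which the paper's argument shows is unnecessary.
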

\begin{proof}
Let $C_1$ be the right side of \eqref{energybnd}. For any $p > 2$, from the  embedding and Sobolev interpolation between $W^{1,p}$ and $L^2$ we have 
\begin{align}\nonumber
\| u^\nu (t) \cdot \hat{\tau} \|_{L^\infty ( \partial \Omega ) } &\le \| u^\nu(t) \|_{C ( \bar{\Omega} ) } \le \| u^\nu(t) \|_{L^2 (\Omega) } ^\theta \| u^\nu \|_{W^{1, p} (\Omega) } ^{1- \theta} \le C \| u^\nu(t) \|_{L^2 (\Omega) } ^\theta \| \omega^\nu  (t) \|_{L^p (\Omega ) } ^{1 - \theta} \\ \nonumber
&\le C^{\frac{1}{\theta}} \epsilon^{-\frac{1-\theta}{\theta} } \sup_{t\in [0,T]}  \| u^\nu (t) \|_{L^2 (\Omega) } + \epsilon \sup_{t\in [0,T]} \| \omega^\nu  (t) \|_{L^p (\Omega ) } \\
&\le C \sqrt{C_1} \epsilon^{-1} + \epsilon \| \omega^\nu  \|_{L^\infty (0, T; L^\infty( \Omega ))},  \label{ubd}
\end{align}
where $\theta = \frac{p-2}{2(p-1) }$, we used the energy bound from Lemma \ref{lem0} and Young's inequality introduced the arbitrarily small $\epsilon$ and taking the limit $p \rightarrow \infty$. On the other hand, from Duhamel's formula and (\ref{vortf}) we obtain \eqref{bcpw}. Also note that $|\beta | \le \alp$.
Therefore, we have the following
\begin{align} \nonumber
\| \omega^\nu  (t) \|_{L^\infty (\partial \Omega ) } &\le 2\alp \|u^\nu \cdot \hat{\tau} \|_{L^\infty ((0, T) \times \partial \Omega ) }+  \| \omega_0 \|_{L^\infty  (\partial \Omega ) } \\ \nonumber
&\qquad + \frac{\alp \Re}{ \St} \int_0 ^t e^{-\frac{1}{\Wi} (t-s) } \|u^\nu \cdot \hat{\tau} \|_{L^\infty ((0, T) \times \partial \Omega ) } ds \\
&\le \| \omega_0 \|_{L^\infty  (\partial \Omega ) }  + \left(2\alp  + \frac{\alp \Re\Wi}{\St} \right)\|u^\nu \cdot \hat{\tau} \|_{L^\infty ((0, T) \times \partial \Omega ) }\nonumber\\
&\le \left ( 2 \alp + \frac{\alp \Re \Wi}{\St} \right ) \left (C \sqrt{C_1} \epsilon^{-1} + \epsilon \| \omega^\nu  \|_{L^\infty (0, T; L^\infty( \Omega ))} \right ) + \| \omega_0 \|_{L^\infty  (\partial \Omega ) }.
\end{align}
 On the other hand, from maximum principle we have
\begin{align}
\| \omega^\nu  \|_{C ( [0, T] \times \bar{\Omega} ) } \le \| \omega_0 \|_{L^\infty ( \Omega ) } + \| \omega^\nu  \|_{L^\infty ((0, T) \times \partial \Omega ) } + T \| \nabla^\perp \cdot f_b \|_{L^\infty ( [0, T] \times \bar{\Omega} ) }.
\end{align}
By taking $\epsilon$ small enough, 
\begin{align} \nonumber
\epsilon &= \frac{1}{2} \left ( 2 \alp + \frac{\alp \Re \Wi}{\St} \right )^{-1}, \\ 
C_2 &= 4 \left ( 2 \alp + \frac{\alp \Re \Wi}{\St} \right )^2 C \sqrt{C_1} + 4 \| \omega_0 \|_{C ( \bar {\Omega} ) } + 2T \| \nabla^\perp \cdot f_b \|_{L^\infty ([0, T] \times \bar{\Omega} ) }, \label{unifbound}
\end{align}
we may conclude the claimed bound \eqref{vorticitybnd}.
\end{proof}

\begin{lemma}[Higher Regularity]\label{lem2}
For any $T>0$, there exists  $C:=C(\Re,\Wi,\St,\alp,u_0,\Omega,T)$  such that
\be\label{velH2}
\| \omega^\nu \|_{C(0,T; H^1(\Omega))  } \leq C , \qquad  \|\Delta \omega^\nu \|_{L^2(0,T; L^2(\Omega))}\leq C, \qquad \| \omega^\nu \|_{H^1(0,T; L^2(\Omega))  } \leq C.
\ee
\end{lemma}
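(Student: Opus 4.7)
The strategy is to obtain the claimed regularity of $\omega^\nu$ by applying standard linear parabolic theory to the vorticity equation \eqref{vortb}, viewing the dynamic boundary relation \eqref{vortf} (equivalently the Duhamel form \eqref{bcpw}) as a time-dependent \emph{Dirichlet} condition whose data is explicitly determined by the trace of $u^\nu$. Lemma \ref{lem1} gives $\omega^\nu \in L^\infty([0,T]\times\bar\Omega)$, and the Biot-Savart estimate \eqref{BSbd} together with Sobolev embedding in the 2D domain $\Omega$ upgrades this to $u^\nu \in L^\infty(0,T;W^{1,p}(\Omega))$ for every $p<\infty$, hence $u^\nu \in L^\infty(0,T;C^\alpha(\bar\Omega))$ for any $\alpha\in(0,1)$; in particular $u^\nu\cdot\hat\tau|_{\partial\Omega}$ inherits this regularity.

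Using \eqref{bcpw}, write $\omega^\nu|_{\partial\Omega}=g(t,x)$ as an explicit functional of $u^\nu\cdot\hat\tau|_{\partial\Omega}$ and initial data, and choose a lift $\Phi(t,x)$ with $\Phi|_{\partial\Omega}=g$ (for instance, the harmonic extension into $\Omega$). Setting $\tilde\omega := \omega^\nu - \Phi$, the homogenised variable satisfies
\begin{align*}
\partial_t\tilde\omega + u^\nu\cdot\nabla\tilde\omega = \tfrac{1}{\Re}\Delta\tilde\omega + F, \qquad \tilde\omega|_{\partial\Omega}=0,
\end{align*}
where $F := \nabla^\perp\cdot f_b + \frac{1}{\Re}\Delta\Phi - \partial_t\Phi - u^\nu\cdot\nabla\Phi$. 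Because $\tilde\omega$ vanishes on $\partial\Omega$, testing against $-\Delta\tilde\omega$ produces no boundary contributions, and one obtains
\begin{align*}
\tfrac{1}{2}\tfrac{d}{dt}\|\nabla\tilde\omega\|_{L^2}^2 + \tfrac{1}{\Re}\|\Delta\tilde\omega\|_{L^2}^2 \lesssim \|u^\nu\|_{L^\infty}^2\|\nabla\tilde\omega\|_{L^2}^2 + \|F\|_{L^2}^2.
\end{align*}
Gronwall closes this and yields $\tilde\omega \in C(0,T;H^1_0)\cap L^2(0,T;H^2)\cap H^1(0,T;L^2)$; adding back $\Phi$ delivers \eqref{velH2}. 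The $C(0,T;H^1)$ statement is recovered from the standard interpolation $L^2(0,T;H^2)\cap H^1(0,T;L^2)\hookrightarrow C(0,T;H^1)$.

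The technical heart of the argument is producing a lift $\Phi$ with $\partial_t\Phi,\Delta\Phi\in L^2(0,T;L^2(\Omega))$, which reduces via trace theory to $\partial_t g \in L^2(0,T;H^{1/2}(\partial\Omega))$ (together with $g\in L^2(0,T;H^{3/2}(\partial\Omega))$). By \eqref{vortf}, the former requires control of $\partial_t(u^\nu\cdot\hat\tau)|_{\partial\Omega}$; since $u^\nu\cdot\hat n = 0$, the tangential projection of the momentum equation expresses this in terms of the pressure gradient, convective, and viscous stresses on $\partial\Omega$. The viscous term $\tfrac{1}{\Re}\Delta u^\nu\cdot\hat\tau$ is the delicate piece, and is most cleanly treated on a Galerkin approximation (where $\omega^\nu$ is smooth so the computation above is rigorous), with the apriori estimate above uniform in the discretization so that one may pass to the limit. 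The spatial regularity $g\in L^2(0,T;H^{3/2}(\partial\Omega))$ is inherited from $u^\nu\cdot\hat\tau\in L^\infty(0,T;H^{3/2}(\partial\Omega))$, which itself follows from the bootstrap in Step 1 together with Lemma \ref{lem1}. I expect this circular interplay between interior and boundary regularity to be the main obstacle, resolved by running everything at the Galerkin level and then closing the loop by strong compactness.
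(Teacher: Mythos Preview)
Your lifting strategy is reasonable in outline --- indeed the paper's own existence proof (its operator $N_\Omega$) does something very similar --- but there is a genuine gap in how you propose to control $\partial_t g$, and hence $\partial_t\Phi$, in $L^2$. You write that the tangential projection of the \emph{momentum} equation on $\partial\Omega$ will supply $\partial_t(u^\nu\cdot\hat\tau)$. That route forces you to estimate the tangential pressure gradient $\partial_\tau p^\nu$ on $\partial\Omega$, which is a nonlocal functional of the full solution and is not bounded by any of the a priori quantities at hand (energy, $\|\omega^\nu\|_{L^\infty}$, or $\|\nabla\tilde\omega\|_{L^2}$). Running the computation on a Galerkin level does not help: the issue is not that the identities are formal, but that the resulting inequality does not close --- the right–hand side would contain a pressure term you cannot absorb or Gr\"onwall away. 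Relatedly, your claim that $u^\nu\cdot\hat\tau\in L^\infty(0,T;H^{3/2}(\partial\Omega))$ follows from Lemma~\ref{lem1} is circular: that trace regularity needs $u^\nu\in H^2(\Omega)$, i.e.\ $\omega^\nu\in H^1(\Omega)$, which is exactly the conclusion of Lemma~\ref{lem2}.

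The paper avoids both problems by staying entirely in the vorticity formulation. It tests \eqref{vortb} against $-\Delta\omega^\nu$ directly (no lifting), which produces the boundary term $\int_{\partial\Omega}\hat n\cdot\nabla\omega^\nu\,\partial_t\omega^\nu\,\rmd S$. Using \eqref{vortf} this reduces to controlling $\int_{\partial\Omega}\hat n\cdot\nabla\omega^\nu\,\beta\,\partial_t(u^\nu\cdot\hat\tau)\,\rmd S$, and the key move is to write $\partial_t u^\nu = K_\Omega[\partial_t\omega^\nu] = K_\Omega\bigl[-\nabla\cdot(u^\nu\omega^\nu)+\tfrac{1}{\Re}\Delta\omega^\nu+\nabla^\perp\cdot f_b\bigr]$ via Biot--Savart and the \emph{vorticity} equation, thereby eliminating the pressure entirely. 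After extending the boundary integral to the bulk by a smooth cutoff, the dangerous piece $K_\Omega[\Delta\omega^\nu]$ is handled using the smoothing of $K_\Omega$: one has $\|K_\Omega[\Delta\omega^\nu]\|_{L^2}\le C\|\Delta\omega^\nu\|_{H^{-1}}\le C\|\nabla\omega^\nu\|_{L^2}$ (goes into Gr\"onwall) and $\|K_\Omega[\Delta\omega^\nu]\|_{H^1}\le C\|\Delta\omega^\nu\|_{L^2}$ (absorbed by the dissipation). This is precisely the mechanism that closes the estimate; if you want to salvage your lifting approach, replace the momentum equation step by this Biot--Savart representation of $\partial_t u^\nu$.
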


\begin{proof}
By multiplying $(-\Delta) \omega^\nu$ to (\ref{vortb}) and integrating we have
\begin{align} \label{vortHD}
\int_{\Omega} (-\Delta \omega^\nu) \partial_t \omega^\nu dx + \frac{1}{\Re} \int_{\Omega} |\Delta \omega^\nu |^2 dx = \int_{\Omega} \Delta \omega^\nu u^\nu \cdot \nabla \omega^\nu dx - \int_{\Omega} \Delta \omega^\nu \nabla^\perp \cdot f_b dx.
\end{align}
Note now that the first term of the left hand side of (\ref{vortHD}) can be rewritten as
\begin{align} \label{vortHDRHS}
- \int_{\Omega} \nabla \cdot \left ( \nabla \omega^\nu \partial_t \omega^\nu \right ) dx + \int_{\Omega} \nabla \omega^\nu \cdot \partial_t \nabla \omega^\nu dx = - \int_{\partial \Omega} \hat{n} \cdot \nabla \omega^\nu \partial_t \omega^\nu \rmd S + \frac{1}{2} \frac{\rmd}{\rmd t} \| \nabla \omega^\nu \|_{L^2 (\Omega)} ^2 .
\end{align}
Thus we obtain the following evolution
\begin{align} \nonumber
\frac{1}{2} \frac{\rmd}{\rmd t} \| \nabla \omega^\nu \|_{L^2 (\Omega)} ^2 + \frac{1}{\Re} \|\Delta \omega^\nu \|_{L^2(\Omega)}^2 dx &= \int_{\partial \Omega} \hat{n}\cdot \nabla \omega^\nu \partial_t \omega^\nu  \rmd S\\
&\qquad  + \int_{\Omega} \Delta \omega^\nu u^\nu \cdot \nabla \omega^\nu dx - \int_{\Omega} \Delta \omega^\nu \nabla^\perp \cdot f_b dx.\label{vortHDeqn}
\end{align}
Using the boundary condition (\ref{vortf}) the first term in the right hand side of (\ref{vortHDRHS}) reads
\begin{align} \nonumber
 \int_{\partial \Omega} \hat{n} \cdot \nabla \omega^\nu \partial_t \omega^\nu  \rmd S &= 
\int_{\partial \Omega} \hat{n} \cdot \nabla \omega^\nu  \left(  \beta \partial_tu \cdot \hat{\tau}- \frac{1}{\Wi} \omega^\nu - \left(\frac{\alp \Re}{ \St} - \frac{\beta}{\Wi} \right) u^\nu \cdot \hat{\tau} \right) \rmd S. \nonumber
\end{align}
The second term on the right-hand-side can be written as a bulk term
\be
\frac{1}{\Wi} \int_{\partial \Omega} \hat{n} \cdot \nabla \omega^\nu \omega^\nu \rmd S= \frac{1}{\Wi} \int_{\Omega} \nabla \cdot ( \nabla \omega^\nu \omega^\nu ) dx= \frac{1}{\Wi} \int_{\Omega} \Delta \omega^\nu \omega^\nu  dx+\frac{1}{\Wi} \| \nabla \omega^\nu \|_{L^2 (\Omega) } ^2.
\ee
Therefore, we find that the boundary term becomes
\begin{align} \nonumber
 \int_{\partial \Omega} &\hat{n} \cdot \nabla \omega^\nu \partial_t \omega^\nu \rmd S = -\frac{1}{\Wi} \| \nabla \omega^\nu \|_{L^2 (\Omega) } ^2 -\frac{1}{\Wi} \int_{\Omega} \omega^\nu \Delta \omega^\nu dx\\
&\qquad  - \int_{\partial \Omega} \hat{n} \cdot \nabla \omega^\nu  \left(\frac{\alp \Re}{ \St} - \frac{\beta}{\Wi} \right)u^\nu \cdot \hat{\tau}\   \rmd S +  \int_{\partial \Omega} \hat{n} \cdot \nabla \omega^\nu \beta \partial_t (u^\nu \cdot\hat{\tau}) \rmd S. \label{vortHD3}
\end{align}
The second term of (\ref{vortHD3}) is controlled by 
\begin{align} \label{blterm}
\left|\frac{1}{\Wi} \int_{\Omega} \omega^\nu \Delta \omega^\nu dx\right| \leq\frac{1}{\Wi} \| \Delta \omega^\nu \|_{L^2 (\Omega)} \| \omega^\nu \|_{L^2 (\Omega ) }.
\end{align}
To deal with the third term of (\ref{vortHD3}), we introduce a thin enough tubular neighborhood of $\partial \Omega$, smoothly extend the vector field $\left(\frac{\alp \Re}{\St} - \frac{\beta}{\Wi} \right)\tau$ on $\partial \Omega$ whose support is compactly embedded in this neighborhood, and we denote this vector field as $\Phi$. Then we have 
\begin{align} \nonumber
 \int_{\partial \Omega} \hat{n} \cdot \nabla \omega^\nu \left(\frac{\alp \Re}{ \St} - \frac{\beta}{\Wi} \right) u^\nu \cdot \hat{\tau} \  \rmd S &= \int_{\partial \Omega} \hat{n} \cdot \nabla \omega^\nu u^\nu \cdot \Phi \rmd S \\ \nonumber
&=  \int_{\partial \Omega} \hat{n} \cdot \nabla (\omega^\nu u^\nu \cdot \Phi ) \rmd S -  \int_{\partial \Omega}\hat{n} \cdot   \nabla (u^\nu \cdot \Phi ) \omega^\nu \rmd S \\
&=  \int_{\Omega} \nabla \cdot ( \nabla ( \omega^\nu u^\nu \cdot \Phi ) ) dx -  \int_{\partial \Omega} \hat{n} \cdot \nabla (u^\nu \cdot \Phi )  \omega^\nu \rmd S. \label{vortHD4}
\end{align}
The first term of (\ref{vortHD4}) is controlled by
\begin{align}\nonumber
  \left|\int_{\Omega} \nabla \cdot ( \nabla ( \omega^\nu u^\nu \cdot \Phi ) ) dx\right| &\leq  \left ( \| \Delta \omega^\nu \|_{L^2 (\Omega )} \| u^\nu \|_{L^2 (\Omega ) } \| \Phi \|_{L^\infty (\Omega ) }  + \| \omega^\nu \|_{H^1 (\Omega)} \| u^\nu \|_{H^2 (\Omega) } \|\Phi \|_{W^{1, \infty} (\Omega) }  \right ) \\
&\le c  \|\Delta \omega^\nu \|_{L^2 (\Omega) } \| u^\nu \|_{L^2 (\Omega) }  +C   \|\omega^\nu \|_{H^1 (\Omega) } ^2,\label{flterm}
\end{align}
since $\Phi$ depends only on $\frac{\alp \Re}{\St}, \Wi, \alp$, and $\Omega$ (in particular, on $\kappa$). The second term of (\ref{vortHD4}) is controlled by 
\begin{align}\nonumber
\left| \int_{\partial \Omega} \hat{n} \cdot \nabla (u^\nu \cdot \Phi )  \omega^\nu \rmd S\right| &\leq 
 \| \nabla (u^\nu \cdot \Phi)  \|_{L^2 (\partial \Omega) } \| \omega^\nu \|_{L^2 (\partial \Omega)}\\
& \le  \| u^\nu \cdot \Phi \|_{H^{\frac{3}{2} } (\Omega ) } \| \omega^\nu \|_{H^{1 } (\Omega ) }  \le {C}  \| \omega^\nu \|_{H^1 (\Omega)} ^2\label{flterm2}
\end{align}
by the Sobolev trace inequality. 
It suffices to treat the term
\begin{align} \nonumber
\int_{\partial \Omega} \hat{n} \cdot \nabla \omega^\nu (2\kappa ) \partial_t u^\nu \cdot \hat{\tau} \rmd S.
\end{align}
First note that, from the vorticity equation and the Biot-Savart law, we may express
\begin{align}
\partial_t u^\nu = K_{\Omega} [ \partial_t \omega^\nu] = K_{\Omega} \left[ - \nabla \cdot ( u^\nu \omega^\nu ) + \frac{1}{\Re} \Delta \omega^\nu \right].
\end{align}
Using this correspondence, we have
\begin{align} \nonumber
\int_{\partial \Omega} \hat{n} \cdot \nabla \omega^\nu \beta \partial_t u^\nu \cdot \hat{\tau} \rmd S = \int_{\Omega} \nabla \cdot \left ( \nabla \omega^\nu \Psi \cdot \left ( K_{\Omega} [-\nabla \cdot (u^\nu \omega^\nu ) ] + \frac{1}{\Re}K_{\Omega} [  \Delta \omega^\nu] \right ) \right )dx,
\end{align}
where $T_{\partial \Omega} \Psi = \beta \hat{\tau} $. We now note that
\begin{align} \nonumber
&\left|\int_{\Omega} \nabla \cdot \left ( \nabla \omega^\nu \Psi \cdot  \frac{1}{\Re}K_{\Omega} [ \Delta \omega^\nu]  \right )dx\right|\\ \nonumber
&\quad \leq  \left|\int_{\Omega} \Delta \omega^\nu \Psi \cdot \frac{1}{\Re}K_{\Omega} [ \Delta \omega^\nu] dx\right| + \left|\int_{\Omega} \nabla \omega^\nu \nabla \Psi \cdot \frac{1}{\Re}K_{\Omega} [ \Delta \omega^\nu] dx\right| + \left|\int_{\Omega} \nabla \omega^\nu \Psi \cdot \nabla \frac{1}{\Re}K_{\Omega} [ \Delta \omega^\nu] dx\right| \\ \nonumber
&\quad\le \frac{1}{\Re} \| \Delta \omega^\nu \|_{L^2} \| \Psi \|_{L^\infty} \| K_{\Omega} [\Delta \omega^\nu ] \|_{L^2} + \frac{1}{\Re} \| \nabla \omega^\nu \|_{L^2} \| \Psi \|_{W^{1, \infty} } \| K_{\Omega} [\Delta \omega^\nu ] \|_{H^1} \\ \nonumber
&\quad\le \frac{C(\Psi)}{\Re}  \left ( \| \Delta \omega^\nu \|_{L^2 } \| \Delta \omega^\nu \|_{H^{-1} } + \| \nabla \omega^\nu \|_{L^2} \| \Delta \omega^\nu \|_{L^2 } \right ) \le \frac{C(\Psi)}{\Re}  \| \Delta \omega^\nu \|_{L^2} \| \nabla \omega^\nu \|_{L^2},
\end{align}
where we used $\| \Delta \omega^\nu \|_{H^{-1} } \le \| \nabla \omega^\nu \|_{L^2}$. Note that this estimate involves no boundary terms since $H^{-1}(\Omega)$ is the dual of $H_0^1(\Omega)$. Now, the first term becomes 
\begin{align} \nonumber
&\left|\int_{\Omega} \nabla \cdot \left ( \nabla \omega^\nu \Psi \cdot  K_{\Omega} [-\nabla \cdot (u^\nu \omega^\nu ) ]   \right )dx\right|\\ \nonumber
&\qquad \qquad \leq  \left|\int_{\Omega} \Delta \omega^\nu \Psi \cdot K_{\Omega} [ -\nabla \cdot (u^\nu \omega^\nu ) ] dx\right| + \left|\int_{\Omega} \nabla \omega^\nu \cdot \nabla (\Psi \cdot K_{\Omega} [-\nabla \cdot (u^\nu \omega^\nu ) ] ) dx\right| \\ \nonumber
&\qquad \qquad\le \| \Delta \omega^\nu \|_{L^2} \| \Psi \|_{L^\infty } \| \nabla \cdot (u^\nu \omega^\nu ) \|_{H^{-1}} + \| \nabla \omega^\nu \|_{L^2} \| \Psi \|_{W^{1, \infty }} \| \nabla \cdot (u^\nu \omega^\nu ) \|_{L^2 }\\
&\qquad \qquad\le C \| \Delta \omega^\nu \|_{L^2}  + C' \| \nabla \omega^\nu \|_{L^2}^2
\end{align}
for some constants $C, C'>0$.  To obtain the above, we noted that we used the bounds on $\|u^\nu\|_C, \| \omega^\nu \|_C$ and  $\| \omega^\nu \|_{L^2}$ and therefore $
\| u ^\nu\omega^\nu \|_{H^1} \le   \| u^\nu\|_{H^1 \cap C } \| \omega^\nu \|_{H^1 \cap C}$.  Thus we obtained
\begin{align} \nonumber
\left|\int_{\partial \Omega} \hat{n} \cdot \nabla \omega^\nu (2\kappa ) \partial_t u^\nu \cdot \hat{\tau} \rmd S\right| &\leq \frac{ C(\Psi)}{\Re} \| \Delta \omega^\nu \|_{L^2} \| \nabla \omega^\nu \|_{L^2}+ C \| \Delta \omega^\nu \|_{L^2}  + C' \| \nabla \omega^\nu \|_{L^2}^2\\
& \leq C+  \frac{1}{2 \Re}\| \Delta \omega^\nu \|_{L^2}^2 + C\| \omega^\nu\|_{H^1}^2.\label{ptterm}
\end{align}

Finally, combining  \eqref{blterm}, \eqref{flterm}, \eqref{flterm2} and \eqref{ptterm}, we bound the terms on the right-hand-side of Eqn. \eqref{vortHDeqn} by
\begin{align*}\left| \int_{\partial \Omega} \hat{n}\cdot \nabla \omega^\nu \partial_t \omega^\nu  \rmd S\right|&\leq C+  \frac{1}{2\Re}\| \Delta \omega^\nu \|_{L^2}^2 + C\| \omega^\nu\|_{H^1}^2 + \Re^2 \|\omega^\nu\|_{L^2}^2+  \Re^2 \|u^\nu\|_{L^2}^2, \\\nonumber
\left| \int_{\Omega} \Delta \omega^\nu u^\nu \cdot \nabla \omega^\nu dx - \int_{\Omega} \Delta \omega^\nu \nabla^\perp \cdot f_b dx\right|&\leq \| \Delta \omega^\nu \|_{L^2 } \left ( \|u^\nu \|_{L^\infty } \| \nabla \omega^\nu \|_{L^2 } + \| \nabla^\perp \cdot f_b \|_{L^2 } \right )\\
&\leq   \frac{1}{2\Re}\| \Delta \omega^\nu \|_{L^2}^2 + C\Re \|u^\nu \|_{L^\infty (\Omega)} ^2 \| \omega^\nu\|_{H^1}^2+  \| \nabla^\perp \cdot f_b \|_{L^2 }.
\end{align*}
Noting that by Poincare inequality $\| \omega^\nu \|_{H^1 (\Omega) }$ and $\| \nabla \omega^\nu \|_{L^2 (\Omega)}$ are comparable, and using Cauchy-Schwarz inequality to bury all $\| \Delta \omega^\nu \|_{L^2 (\Omega) }$ terms, we end up with
\begin{align}\nonumber
&\frac{\rmd}{\rmd t} \| \nabla \omega^\nu \|_{L^2 (\Omega)} ^2 + \frac{1}{\Re} \| \Delta \omega^\nu \|_{L^2 (\Omega ) } ^2 + \frac{2}{\Wi} \| \nabla \omega^\nu \|_{L^2 (\Omega)} ^2
  \\ \nonumber
&\quad \leq C(\Re, \Wi,\St, \alp, \Omega) \left ( \left (\| u^\nu \|_{L^\infty (\Omega ) } ^2 + 1\right ) \| \nabla \omega^\nu \|_{L^2 (\Omega ) } ^2\left ( \| \nabla^\perp \cdot f_b \|_{L^2 (\Omega ) } ^2 + \|\omega^\nu \|_{L^2 (\Omega ) } ^2 + \| u^\nu \|_{L^2 (\Omega ) } ^2 \right ) \right ).
\end{align}
Note finally that from the apriori estimate  $\omega^\nu \in C([0,T]\times \ol{\Omega})$ of Lemma \ref{lem1}, we have $u^\nu = K_\Omega[\omega^\nu]\in L^\infty(0,T; W^{1,p}(\Omega))$ for all $1\leq p<\infty$.  In particular, combining this with \eqref{ubd} we find $u^\nu\in C([0,T]\times \ol{\Omega})$.  Whence, by  Lemma \ref{lem1},   the above estimate allows us to conclude that $\omega^\nu \in C(0,T; H^1(\Omega))$ and consequently  $u^\nu \in C(0,T; H^2(\Omega))$.  Moreover, from the vorticity equation we have
\be
\|\partial_t \omega^\nu\|_{L^2} \leq \|u^\nu\|_{L^\infty} \| \nabla \omega^\nu\|_{L^2} + \|\Delta \omega^\nu\|_{L^2},
\ee
which implies that $\omega^\nu \in H^1(0,T; L^2(\Omega))$.
\end{proof}

\subsection{Proof of Theorem \ref{thmglobal}: Global Strong Solutions}

To construct the solution for the system (\ref{vortb})-(\ref{vortf}), we first propose the function space for the solution;
\begin{align} \label{Fnsp}
\mathcal{X} &= \{ \omega \in C_t H^1 (\Omega) \cap C_t C (\bar{\Omega}) \cap H_t ^1 L^2 (\Omega) \ | \  \omega(0) \in H^1 (\Omega) \cap C (\bar{\Omega} ), \Delta \omega (0) \in L^2 (\Omega)  \}, \\
\mathcal{X}' &= \{ \omega \in C_t H^1 (\Omega)  \cap H_t ^1 L^2 (\Omega) \ | \  \omega(0) \in H^1 (\Omega) , \Delta \omega (0) \in L^2 (\Omega)  \},
\end{align} 
with the natural norm $ \| \omega \|_{\mathcal{X}} =   \| \omega \|_{C_t H^1 (\Omega ) } + \| \omega \|_{C_t C (\bar{\Omega} ) } + \|  \omega \|_{H^1 _t L^2 (\Omega ) }  $ and $ \| \omega \|_{\mathcal{X}'} =   \| \omega \|_{C_t H^1 (\Omega ) } + \|  \omega \|_{H^1 _t L^2 (\Omega ) }  $. Here $C_t, H_t ^1, L^2_t$ are shorthand for time interval $[0, T]$. To prove Theorem \ref{thmglobal}, we will:
\begin{enumerate}
\item Establish a contraction mapping $F$ in $\mathcal{X}'$, so that for $\omega(0) \in H^1( \Omega) \cap \{ \Delta \omega(0) \in L^2 (\Omega ) \}$, there is unique $\omega \in \mathcal{X}'$ such that $\omega = F(\omega)$ for a short time $T$. \vspace{2mm}
\item Check that if $\omega(0) \in C(\bar{\Omega})$ then $\omega \in \mathcal{X}$ in fact. Then Lemma \ref{lem1} and consequently Lemma \ref{lem2} become valid, establishing a priori estimates on $\mathcal{X}$.\vspace{2mm}
\item Noting that $\Delta \omega (t) \in L^2 (\Omega)$ for almost every $t \in [0, T]$, so we can continue a point close to $T$, thereby obtaining global well-posedness.
\end{enumerate}

\begin{proof}
For the description of boundary behavior, we define the following operator:
\be\label{bdryOp} 
N_{\Omega} [\omega] := N_{\Omega} ^1 [\omega] + N_{\Omega} ^2 [\omega] + N_{\Omega} ^3 [\omega], 
\ee
where
\begin{align} \nonumber
N_{\Omega} ^1 [\omega] (t) &= \Psi_1 \Psi _2 \cdot K_{\Omega} [\omega (t)], \\  \nonumber
N_{\Omega} ^2 [\omega] (t) &= e^{-\frac{1}{\Wi} t}  \left ( \omega (0) - \Psi_1 \Psi_2 \cdot K_{\Omega} [\omega(0) ] \right ), \\ \nonumber
N_{\Omega} ^3 [\omega] (t) &= -\frac{\alp \Re}{ \St} \int_0 ^t e^{-\frac{1}{\Wi} (t-s)}  \Psi_2 \cdot K_{\Omega} [\omega (s) ] ds,
\end{align}
where and  $\Psi_1$ and $\Psi_2$ are smooth extensions of $\beta$ and $\hat{\tau}$, respectively satisfying that the boundary traces $T_{\partial \Omega} \Psi_1 = \beta ,$ and $T_{\partial \Omega} \Psi_2 = \hat{\tau}$ together with the support condition (with $\rho$ to be specified later in the proof)
\begin{align}
\mathrm{supp} (\Psi_i)  \subset E_{\rho} ( \partial \Omega) := \{ x \in \Omega \  |\  {\rm dist} (x, \partial \Omega) \le \rho \}, \quad i = 1, 2,
\end{align}
together with the estimate
\be\nonumber
\| D^k \Psi_i \|_{L^\infty (\Omega) } \le \frac{C}{\rho^k},\quad  i=1, 2, \quad k=0, 1, 2.
\ee
Note that
\begin{align} \label{Nestimate1}
\| N_{\Omega} ^1 [\omega] \|_{C_t H^2 (\Omega) \cap H_t ^1 H^1 (\Omega)} + \| N_{\Omega} ^3 [ \omega] \|_{C_t H^2 (\Omega) \cap H_t ^1 H^1 (\Omega) } \le C \left ( 1 + \frac{1}{\rho^2} \right ) \| \omega \|_{\mathcal{X} ' }, \\ \label{Nestimate2}
\| \Delta N_{\Omega} ^2 [\omega] \|_{C_t L^2 (\Omega) } + \| N_{\Omega} ^2 [ \omega] \|_{C_t H^1 (\Omega) \cap H_t ^1 H^1 (\Omega) } \le C \left ( 1 + \frac{1}{\rho^2 } \right ) \left ( \| \omega (0) \|_{H^1 (\Omega) } + \| \Delta \omega(0) \|_{L^2} \right ). 
\end{align}
Furthermore, by the Sobolev embedding $\| \omega(t) \|_{C(\bar{\Omega} )} \le C \| \omega(t) \|_{H^2 (\Omega) }$ and $\omega(0) \in C(\bar{\Omega})$, we have
\begin{align}
\| N_{\Omega} [\omega] \|_{C_t C(\bar{\Omega}) } \le C \left (1 + \frac{1}{\rho^2} \right ) \left ( \| \omega \|_{\mathcal{X}' } + \| \omega(0) \|_{H^1 (\Omega) \cap C(\bar{\Omega} ) } \right ).
\end{align}

\noindent{\textbf{Step 1: (Solution Scheme)} }
Let $F$ be an operator on $\mathcal{X}$ defined by $F(\omega) = \omega^n$, where $\omega^n$ is the solution of
\begin{align}\label{omsys1}
\partial_t \omega^n &= \frac{1}{\Re} \Delta \omega^n - K_{\Omega} [\omega] \cdot \nabla \omega^n + \nabla^\perp \cdot f_b, \quad\qquad \ \  \text{in} \quad   \Omega \times (0,T),\\
\omega^n (0) &= \omega(0),\qquad\qquad\quad\quad \qquad\qquad\qquad\quad\qquad\ \ \  \text{on} \quad   \Omega \times \{t=0\},\\
  T_{\partial \Omega}  [\omega^r]&= T_{\partial \Omega} [N_{\Omega} [\omega]]\quad\qquad\qquad\qquad\qquad\qquad\qquad \ \ \ \  \text{on} \quad   \partial\Omega \times (0,T).\label{omsys2}
\end{align}
Let $\omega^r = \omega^n - N_{\Omega} [\omega]$. Then $\omega^r$ solves
\begin{align} \label{omegar1}
\partial_t \omega^r &=  \frac{1}{\Re}  \Delta \omega^r - K_{\Omega} [\omega] \cdot \nabla \omega^r + R,\qquad\qquad\qquad\ \ \text{in} \quad   \Omega \times (0,T), \\
R &= \nabla^\perp \cdot f_b - \left(\partial_t + K_{\Omega} [\omega] \cdot \nabla -  \frac{1}{\Re}  \Delta \right) N_{\Omega} [\omega], \\
\omega^r (0) &= 0, \qquad\qquad\qquad\qquad\quad\quad \qquad\qquad\qquad\quad\ \ \  \text{on} \quad   \Omega \times \{t=0\}, \\ \label{omegar4}
  T_{\partial \Omega}  [\omega^r] &= 0 \qquad\qquad\qquad\quad\qquad\qquad\qquad\qquad\qquad \ \ \ \ \  \text{on} \quad   \partial\Omega \times (0,T).
\end{align}
Since $R \in L^2_t L^2 (\Omega)$ from previous calculations, there is a unique $\omega^r$ solving them, satisfying
\begin{align}
\omega^r \in C_t H_0 ^1 (\Omega) \cap L^2 _t (H^2 \cap H_0 ^1) (\Omega)  \cap H^1_t L^2 (\Omega).
\end{align}
As a consequence, we have
 \be
\omega^n = \omega^r + N_{\Omega} [\omega] \in C_t H^1 (\Omega) \cap H_t ^1 L^2 (\Omega),
\ee 
with $\omega^n (0) = \omega(0)$ and solves the system \eqref{omsys1}--\eqref{omsys2}. In addition, since $N_{\Omega} [\omega] \in C_t C (\bar{\Omega})$ by the maximum principle $\omega^n \in C_t C(\bar{\Omega})$. Note that we only used $\omega \in \mathcal{X}'$ and $\omega(0) \in C(\bar{\Omega})$ to obtain $F(\omega) = \omega^n \in \mathcal{X}$, and we do not need $\omega \in \mathcal{X}$. Finally, we note that $\Delta \omega^n = \Delta \omega^r + \Delta N_{\Omega} [\omega] \in L_t ^2 L^2 (\Omega)$. 
\vspace{2mm}

\noindent{\textbf{Step 2: (Contraction Mapping)}}
Next, we show that for a given $\omega_0 \in H^1 (\Omega) \cap C (\bar{\Omega})$ with $\Delta \omega_0 \in L^2 (\Omega)$, $F$ is in fact a contraction mapping in 
\begin{align}
\mathcal{Y} = \{ \omega \in \mathcal{X}' \ | \  \| \omega  \|_{\mathcal{X}' } \le B, \,\, \omega(0) = \omega_0 \},
\end{align}
for a suitable $B>0$, and small enough time $T$. Since we have enough regularity, we can rigorously perform the following calculation: for $\omega \in \mathcal{Y}$, let $v = F(\omega)$. Then
\begin{align} \label{veqn}
\partial_t v &=  \frac{1}{\Re}  \Delta v - K_{\Omega} [\omega] \cdot \nabla v + \nabla ^\perp \cdot f_b , \quad\quad\quad\quad   \text{in} \quad   \Omega \times (0,T),\\
v(0) &= \omega_0,\qquad\qquad\quad\quad \qquad\qquad\qquad\quad\qquad\ \ \  \text{on} \quad   \Omega \times \{t=0\}, \\ 
T_{\partial \Omega} v &= T_{\partial \Omega}  [ N_{\Omega} [\omega] ] \qquad\qquad\qquad\quad\qquad\qquad \ \ \ \ \ \  \text{on} \quad   \partial\Omega \times (0,T).
\end{align}
Since $\Delta v \in L^2_t L^2 (\Omega)$ we have
\begin{align} \label{LWellestim1}
\int_{\Omega} (- \Delta v ) \partial_t v dx +  \frac{1}{\Re}  \int_{\Omega} |\Delta v|^2 dx =  \int_{\Omega} K_{\Omega} [\omega] \cdot \nabla v (\Delta v) dx - \int_{\Omega} (\Delta v) (\nabla^\perp \cdot f_b) dx.
\end{align}
The first term of (\ref{LWellestim1}) becomes
\begin{align}\nonumber
-\int_{\Omega} \nabla \cdot (\nabla v \partial_t v ) dx +\frac{1}{2} \frac{\rmd}{\rmd t} \int_{\Omega}& |\nabla v | ^2 dx = - \int_{\partial \Omega} \hat{n} \cdot T_{\partial \Omega} (\nabla v ) T_{\partial \Omega} (\partial_t v) \rmd S +\frac{1}{2} \frac{\rmd}{\rmd t} \| \nabla v \|_{L^2 (\Omega)} ^2 \\  \nonumber
&= - \int_{\partial \Omega} \hat{n} \cdot T_{\partial \Omega}  (\nabla v ) T_{\partial \Omega} (\partial_t N_{\Omega} [\omega] ) \rmd S +\frac{1}{2} \frac{\rmd}{\rmd t} \| \nabla v \|_{L^2 (\Omega)} ^2 \\ \nonumber
&= - \int_{\Omega} \nabla \cdot ( \nabla v \partial_t N_{\Omega} [\omega] ) dx +\frac{1}{2} \frac{\rmd}{\rmd t} \| \nabla v \|_{L^2 (\Omega) } ^2 \\ 
&= - \int_{\Omega} \Delta v \partial_t N_{\Omega} [\omega] dx - \int_{\Omega} \nabla v \cdot \nabla \partial_t N_{\Omega} [\omega] dx +\frac{1}{2} \frac{\rmd}{\rmd t}  \| \nabla v \|_{L^2 (\Omega) } ^2 . \nonumber
\end{align}
The issue is control of $\| \partial_t N_{\Omega} [\omega] \|_{L^2 _t L^2 (\Omega)}$ and $\| \nabla \partial_t N_{\Omega} [\omega] \|_{L^2 _t L^2 (\Omega)}$. Here we use two tricks. 
\begin{enumerate}
\item We have a freedom in choosing $\rho$, and for small enough, fixed $T$ we choose $\rho = T^\beta$ accordingly. \vspace{2mm}
\item When controlling the term $\int_{\Omega} \nabla v \cdot \nabla \partial_t N_{\Omega} [\omega] dx$, we use
\begin{align} \nonumber
\int_{\Omega} \nabla v \cdot \nabla \partial_t N_{\Omega} [\omega] dx \le \| \nabla v \|_{L^2 (\Omega) } \| \nabla \partial_t N_{\Omega} [\omega] \|_{ L^2 (\Omega)} \le t^{-\alpha} \| \nabla v \|_{L^2 (\Omega) } ^2 + t^\alpha  \| \nabla \partial_t N_{\Omega} [\omega] \|_{ L^2 (\Omega)} ^2,
\end{align}
which enables the control of $ \| \nabla \partial_t N_{\Omega} [\omega] \|_{ L^2 (\Omega)} ^2$ term for a short time. 
\end{enumerate}
For $\| \partial_t N_{\Omega} [\omega] \|_{L^2 _t L^2 (\Omega)}$, we have
\begin{align} \nonumber
\| \partial_t &N_{\Omega} [\omega] (t) \|_{L^2 (\Omega)} \le C \| \omega_0 \|_{L^2 (\Omega) } \\ \nonumber
&\ + C \| \Psi_2 \|_{L^{p'} (\Omega)} \left (\| K_{\Omega} [\partial_t \omega (t)] \|_{L^p(\Omega) } + \| K_{\Omega} [\omega (t) ]\|_{L^p (\Omega) } + \int_0^t \| K_{\Omega} [\omega(s) ] \|_{L^p (\Omega) } ds \right ) \\
&\le C \| \omega_0 \|_{H^1 (\Omega)} + C T^{\frac{\beta}{p'}} \left ( \| \partial_t \omega (t) \|_{L^2 (\Omega) } + \| \omega(t) \|_{L^2 (\Omega) } + \sqrt{t} \| \omega\|_{L^2 _t L^2 (\Omega)} \right ),
\end{align}
where $\frac{1}{p} + \frac{1}{p'} = \frac{1}{2}$, $2 < p < \infty $ and $p' > 2$, by Sobolev embedding $H^1 (\Omega)\subset L^p (\Omega) $ and the bound $\| \Psi_i \|_{L^{p'} (\Omega) } \le C T^{\frac{\beta}{p'}}$. Similarly for $\| \nabla \partial_t N_{\Omega} [\omega] \|_{L^2 _t L^2 (\Omega)}$,
\begin{align} \nonumber
\| \nabla \partial_t N_{\Omega} &[\omega] (t) \|_{L^2 (\Omega)} \le \| \nabla (\Psi_1 \Psi_2 ) \|_{L^{p'} (\Omega) } \| K_{\Omega} [\partial_t \omega (t) ] \|_{L^p (\Omega) } + \| \Psi_1 \Psi_2 \|_{L^\infty (\Omega) } \| \nabla K_{\Omega} [\partial_t \omega (t) \|_{L^2 (\Omega) } \\ \nonumber
&\quad +\frac{1}{\Wi} \left ( \| \omega_0 \|_{H^1 (\Omega) } + \| \nabla (\Psi_1 \Psi_2 ) \|_{L^{p'} (\Omega) } \| K_{\Omega} [\omega_0 ] \|_{L^p (\Omega) } + \| \Psi_1 \Psi_2 \|_{L^\infty (\Omega) } \| \nabla K_{\Omega} [ \omega_0 \|_{L^2 (\Omega) } \right )  \\ \nonumber
&\quad+ \frac{\alp\Re}{\St}   \int_0 ^t  \left ( \| \nabla \Psi_2 \|_{L^2 (\Omega)} \| K_{\Omega} [\omega (s) ] \|_{L^\infty (\Omega) } + \| \Psi_2 \|_{L^\infty (\Omega) } \| \nabla K_{\Omega} [\omega(s)] \|_{L^2 (\Omega) } \right ) ds    \\ \nonumber
&\quad +\frac{\alp\Re}{\St}  \left ( \| \nabla \Psi_2 \|_{L^2 (\Omega) } \| K_{\Omega} [\omega(t) ] \|_{L^\infty (\Omega) } + \| \Psi_2 \|_{L^\infty (\Omega) } \| \nabla K_{\Omega} [\omega (t) ] \|_{L^2 (\Omega)} \right )  \\ 
&\le C (1 + T^{\beta(\frac{1}{p'} - 1)}) \left ( \| \omega_0 \|_{H^1 (\Omega) } + \| \partial_t \omega (t) \|_{L^2 (\Omega) } +  (1+t) \| \omega \|_{C_t H^1 (\Omega) } \right ),
\end{align}
by Sobolev embedding $  H^2 (\Omega)\subset L^\infty (\Omega)$ and the bounds $\| \nabla \Psi_2 \|_{L^2 (\Omega) } \le \| \nabla \Psi_2 \|_{L^\infty (\Omega) } T^{\frac{\beta}{2} }$ together with  $a^{\frac{1}{p'} -1 } > a^{-\frac{1}{2} }$ for $p'>2$ and $0<a<1$. Therefore, we have
\begin{align} \nonumber
\frac{\rmd}{\rmd t}& \| \nabla v \|_{L^2 (\Omega) } ^2 + \frac{1}{\Re} \| \Delta v \|_{L^2 (\Omega) } ^2 \le C (\| \omega \|_{C_t H^1 (\Omega) } ^2 + t^{-\alpha} ) \| \nabla v \|_{L^2 (\Omega) } ^2  \\ \nonumber
&+ C \left ( \| \omega_0 \|_{H^1 (\Omega) } ^2 +   \| f \|_{L_t ^\infty H^1 (\Omega)} ^2 + \| \omega_0 \|_{H^1 (\Omega) } ^2 + T^{\frac{\beta}{p'} } (\|\partial_t \omega(t) \|_{L^2 (\Omega) } ^2 + \| \omega(t) \|_{L^2 (\Omega) } ^2 + t \| \omega \|_{L^2 _t L^2 (\Omega) } ^2 ) \right ) \\ 
&+ C t^\alpha ( 1 + T^{\beta(\frac{1}{p'} - 1)} )^2 \left ( \| \omega_0 \|_{H^1 (\Omega) } ^2 + \| \partial_t \omega (t) \|_{L^2 (\Omega) } ^2 + (1+ t^2 ) \| \omega \|_{C_t H^1 (\Omega ) } ^2 \right ).
\end{align}
Noting from \eqref{veqn} that $ \| \partial_t v \|_{L^2 (\Omega) } ^2  \le  \Re^{-2} \| \Delta v \|_{L^2} ^2 + \| \omega \|_{C_t H^1 (\Omega)} ^2 \| \nabla v \|_{L^2 (\Omega )} ^2 +  \| f \|_{L_t ^\infty H^1 (\Omega)}^2$, 
and by Gr\"{o}nwall's inequality we have
\begin{align} \nonumber
\| v \|_{\mathcal{X}'} ^2 &= \|  v \|_{C_t H^1 (\Omega) } ^2  +  \| \partial_t v \|_{L^2 _t L^2 (\Omega) } ^2 \\ \nonumber
&\le C \exp \left ( T \| \omega \|_{C_t H^1 (\Omega) } ^2 + T^{1- \alpha } \right )\\ \nonumber
&\hspace{30mm}  \times \left ( T \|\omega_0 \|_{H^1(\Omega) }^2 + T  \| f \|_{L_t ^\infty H^1 (\Omega)}^2 + T(1 + T^\alpha (1 + T^{\beta(\frac{1}{p'} - 1) } ) ^2 ) \| \omega_0 \|_{H^1 (\Omega) } ^2 \right ) \\ \nonumber
&\qquad + C \exp \left ( T \| \omega \|_{C_t H^1 (\Omega) } ^2 + T^{1- \alpha } \right ) \\  \nonumber
&\hspace{25mm} \times \left ( (T ^{\frac{\beta}{p'} } (1 + T^2) + T^\alpha (1 + T ^{\beta(\frac{1}{p'} -1 )} ) ^2 ) \| \omega \|_{H^1_t L^2 (\Omega ) } ^2 + T (1 + T^2) \| \omega \|_{C_t H^1 (\Omega) } ^2 \right ) \\
&\le C e^{B^2 T +T^{1-\alpha } } O(T^q) \left (  \| \omega_0 \|_{H^1 (\Omega) } ^2 + \| f \|_{L_t ^\infty H^1 (\Omega)} ^2 + B^2  \right ),
\end{align}
where we choose $\alpha + 2 \beta (1 - \frac{1}{p'}) > 0$. Then for any $B>0$, for sufficiently small $T$ we have $\| v\|_{\mathcal{X}'} \le B$. \newline
The same calculation shows that $F$ is a contraction mapping on $\mathcal{Y}$ for a sufficiently small $T$. Let $\omega_1, \omega_2 \in \mathcal{Y}$ with $y = \omega_1 - \omega_2$, and let $z = F(\omega_1) - F(\omega_2)$. Then $z$ solves
\begin{align}
\partial_t z &= \frac{1}{\Re} \Delta z - K_{\Omega} [\omega_1] \cdot \nabla z - K_{\Omega} [y] \cdot \nabla F(\omega_2), \quad\quad\quad\ \  \text{in} \quad   \Omega \times (0,T),\\
z(0) &= 0,\qquad\qquad\quad\quad \qquad\qquad\qquad\quad\qquad\quad\qquad\quad\ \ \  \text{on} \quad   \Omega \times \{t=0\}, \\
T_{\partial \Omega} [z]&= T_{\partial \Omega}  [N_{\Omega} [y] ] \qquad\qquad\qquad\quad\quad\quad\qquad\qquad\qquad \ \ \ \ \ \  \text{on} \quad   \partial\Omega \times (0,T).
\end{align}
Then, the same computations as above gives the following bound on $z$ in $\mathcal{X}'$:
\begin{align}
\| z \|_{\mathcal{X}'} ^2 \le C \exp \left ( 2 T B + T^{1- \alpha} \right ) O(T^q)   \| y \|_{\mathcal{X}'} ^2 ( 1+ B^2 ),
\end{align}
which follows from the estimate
\begin{align} \nonumber
\| K_{\Omega} [y] \cdot \nabla F(\omega_2) \|_{L^\infty _t L^2 (\Omega) } ^2 \le C \| K_{\Omega} [y] \|_{C_t H^2 (\Omega)} ^2 \| F(\omega_2 ) \|_{C_t H^1 (\Omega) } ^2 \le C \| y \|_{\mathcal{X}'} ^2 B^2.
\end{align}
Consequently, there is unique $\omega \in \mathcal{X}'$ such that $F(\omega) = \omega$, and since $F(\omega) \in \mathcal{X}$ we have $\omega \in \mathcal{X}$. Then by Lemma \ref{lem1} and Lemma \ref{lem2} we have a bound 
\begin{align} \nonumber
\| \omega \|_{\mathcal{X} } \le C(\omega_0, T),
\end{align}
which does not blow up for finite $T>0$ or $\| \omega \|$. Also, $\Delta \omega (t) \in L^2 (\Omega)$ for a.e. $t \in [0, T]$, which means that we can continue the solution. Finally, this proves global well-posedness of the system in $\mathcal{X}$.
\end{proof}

\begin{cor} If $\omega_0 \in H^2 (\Omega)$, then $\omega \in L^2 (0, T; H^2 (\Omega))$.
\end{cor}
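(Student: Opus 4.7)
The plan is to apply standard elliptic regularity to the Dirichlet problem satisfied by $\omega^\nu(t)$ at almost every $t\in[0,T]$, using the bounds from Lemma \ref{lem2} and promoting the trace of the boundary data via the enhanced hypothesis $\omega_0\in H^2(\Omega)$. Concretely, from Lemma \ref{lem2} we already have $\Delta\omega^\nu \in L^2(0,T;L^2(\Omega))$ and $\omega^\nu \in C(0,T;H^1(\Omega))$, and hence by the Biot--Savart estimate \eqref{BSbd} also $u^\nu \in C(0,T;H^2(\Omega))$; the trace theorem on the smooth boundary then yields $u^\nu\cdot\hat\tau|_{\partial\Omega} \in C(0,T;H^{3/2}(\partial\Omega))$. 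The new input afforded by the hypothesis $\omega_0\in H^2(\Omega)$ is that, by trace, $\omega_0|_{\partial\Omega}\in H^{3/2}(\partial\Omega)$, and by \eqref{BSbd}, $u_0 = K_\Omega[\omega_0]\in H^3(\Omega)$ so that $u_0\cdot\hat\tau|_{\partial\Omega}\in H^{5/2}(\partial\Omega)$.

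Inserting these regularities into the pointwise-in-$t$ formula \eqref{bcpw} for $\omega^\nu(t)|_{\partial\Omega}$, each of the three summands---the term proportional to $u^\nu\cdot\hat\tau$, the exponentially decaying initial contribution $e^{-t/\Wi}(\omega_0-\beta u_0\cdot\hat\tau)$, and the Duhamel integral in $u^\nu\cdot\hat\tau$---belongs to $L^\infty(0,T;H^{3/2}(\partial\Omega))$, with the integral term handled by Minkowski's inequality in $H^{3/2}(\partial\Omega)$. In particular,
\[
\omega^\nu|_{\partial\Omega} \in L^2(0,T;H^{3/2}(\partial\Omega)).
\]

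Finally, for a.e.\ $t$ the function $\omega^\nu(t)$ solves the linear Dirichlet problem $\Delta v = \Delta\omega^\nu(t)$ in $\Omega$ with prescribed boundary value $v|_{\partial\Omega} = \omega^\nu(t)|_{\partial\Omega}$, and standard elliptic regularity on the smooth bounded domain $\Omega$ yields
\[
\|\omega^\nu(t)\|_{H^2(\Omega)} \leq C\bigl(\|\Delta\omega^\nu(t)\|_{L^2(\Omega)} + \|\omega^\nu(t)|_{\partial\Omega}\|_{H^{3/2}(\partial\Omega)}\bigr).
\]
Squaring in $t$ and integrating over $[0,T]$ closes the argument. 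There is no substantive obstacle; the only careful bookkeeping is to verify that every term in \eqref{bcpw} carries trace regularity $H^{3/2}(\partial\Omega)$, which is precisely what upgrading $\omega_0$ from $H^1\cap C(\bar\Omega)$ to $H^2(\Omega)$ buys in the otherwise-critical initial-data piece.
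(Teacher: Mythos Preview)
Your argument is correct but proceeds by a genuinely different route than the paper's. The paper revisits the construction of $\omega^\nu$ via the decomposition $\omega^\nu = \omega^r + N_\Omega[\omega^\nu]$ from the proof of Theorem~\ref{thmglobal}: the extension $N_\Omega[\omega^\nu]$ lies in $C_t H^2(\Omega)$ once $\omega_0\in H^2(\Omega)$ (the only piece needing the upgraded hypothesis is $N_\Omega^2$, which contains $\omega_0$ directly), while $\omega^r$ solves a parabolic problem with \emph{homogeneous} Dirichlet data and source in $L^2_tL^2$, whence $\omega^r\in L^2_tH^2$ by standard parabolic regularity. You instead work time-slice by time-slice, upgrading the boundary trace to $H^{3/2}(\partial\Omega)$ via \eqref{bcpw} and then invoking elliptic regularity for the inhomogeneous Dirichlet problem with the already-controlled $\Delta\omega^\nu(t)\in L^2(\Omega)$ as interior source. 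Your approach has the advantage of being self-contained and not requiring a return to the solution scheme; the paper's approach has the advantage of avoiding the trace-lifting estimate and reusing machinery already in hand. Both hinge on the same observation: the upgraded assumption $\omega_0\in H^2$ is needed precisely to make the initial-data contribution in the boundary expression carry enough regularity.
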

\begin{proof}
Note that $N_{\Omega} [\omega] \in C_t H^2 (\Omega)$ if $\omega_0 \in H^2 (\Omega)$ by estimates (\ref{Nestimate1}) and definition of $N_{\Omega} ^2 [\omega]$. Note that $\omega = \omega^r + N_{\Omega} [\omega]$, where $\omega^r$ solves the system (\ref{omegar1}) -(\ref{omegar4}), and therefore $\omega^r \in L^2 (0, T; H^2 (\Omega) )$.
\end{proof}

\section{Inviscid limit and quantitative drag reduction }\label{sec:dragred}

Consider a smooth solution $u$ of the Euler equations
\begin{align}\label{Eb}
\partial_t u + u \cdot \nabla u  &= - \nabla p   + f_b    \quad \qquad\quad \ \ \  \text{in} \quad   \Omega \times (0,T),\\
\nabla \cdot u  &= 0 \qquad \qquad\qquad\quad \ \ \ \ \  \text{in} \quad   \Omega \times (0,T),\\
u \cdot \hat{n} &= 0 \qquad \qquad\qquad\quad \ \ \ \ \  \text{on} \quad  \partial \Omega \times (0,T),\\ 
u|_{t=0} &= u_0   \qquad \qquad\qquad\quad \ \ \  \text{on} \quad  \Omega\times \{t=0\}.  \label{Ef}
\end{align}
Strong Euler solutions are guaranteed to exist globally starting from regular initial data in two-dimensions on domains with smooth boundaries \cite{Kato67}.  The nature of the inviscid  (high-Reynolds number) limit of solutions of the Navier-Stokes--End-Functionalized  polymer system \eqref{NSb}--\eqref{NSf} is a natural question; do solutions with infinitesimal viscosity behave approximately as strong solutions of the inviscid equations?  We answer this question in the affirmative below, and provide a rate of convergence as Reynolds number tends to infinity.

\begin{thm}[Inviscid Limit and Drag Reduction]\label{IL}
Let $\Omega\subset \mathbb{R}^2$  be a bounded domain with $C^2$ boundary. Fix $T>0$ and let  $u^\nu$ be a strong solution of \eqref{NSb}--\eqref{NSf} with initial data $u_0$ on $[0,T]\times \Omega$ and mean-zero forcing provided by Theorem \ref{thmglobal}.  Let  $u$ be the global strong Euler solution \eqref{Eb}--\eqref{Ef} with initial data $u_0$. Then 
\be
\sup_{t\in [0,T]} \|u^\nu(t)- u(t)\|_{L^2(\Omega)}  =O( \Re^{-1/2}).
\ee
Furthermore, the wall friction factor $\langle f\rangle$ (global momentum defect) vanishes as
\be
 \langle f\rangle :=\frac{1}{\Re} \fint_0^T \fint_{\partial\Omega} \hat{n} \cdot \nabla u^\nu(x,t) \rmd S\rmd t =   O(\Re^{-1}),
\ee
and the global energy dissipation tends to zero as
\be
\langle \varepsilon^\nu \rangle:= \frac{1}{\Re}\fint_0^T \fint_\Omega |\nabla u^\nu(x,t)|^2 \rmd x \rmd t  = O(\Re^{-1}).
\ee
\end{thm}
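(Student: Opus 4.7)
The three claims rest on the uniform-in-$\Re$ vorticity bound $\|\omega^\nu\|_{L^\infty([0,T]\times\bar{\Omega})}\le C_2$ provided by Lemma \ref{lem1}. This estimate is $\Re$-independent precisely because, under the Stokes--Einstein relation \eqref{SE}, the coefficient $\alp\Re\Wi/\St$ appearing in \eqref{unifbound} is a constant in $\Re$. Combined with Biot--Savart in two dimensions and Sobolev embedding, it further implies uniform-in-$\Re$ bounds on $\|u^\nu\|_{L^\infty_{t,x}}$ and on $\|\nabla u^\nu\|_{L^\infty_t L^p_x}$ for every $p<\infty$.

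For the strong convergence, I will perform a relative energy estimate on $w:=u^\nu-u$, which is divergence-free with $w\cdot\hat n=0$ on $\partial\Omega$. Subtracting \eqref{Eb} from \eqref{NSb} and pairing with $w$ in $L^2$, the trilinear term $\int u^\nu\cdot\nabla w\cdot w$ vanishes by incompressibility and non-penetration, $|\int(w\cdot\nabla u)\cdot w|\le \|\nabla u\|_{L^\infty}\|w\|_{L^2}^2$, and writing $\Delta u^\nu = 2\nabla\cdot D(u^\nu)$ then integrating by parts gives
\begin{equation*}
\tfrac{1}{\Re}\int_\Omega\Delta u^\nu\cdot w\,\rmd x = -\tfrac{2}{\Re}\int_\Omega D(u^\nu):D(w)\,\rmd x + \tfrac{2}{\Re}\int_{\partial\Omega}(D(u^\nu)\hat n)\cdot w\,\rmd S.
\end{equation*}
Completing the square on the bulk piece yields $-\tfrac{1}{\Re}(\|D(u^\nu)\|_{L^2}^2+\|D(w)\|_{L^2}^2-\|D(u)\|_{L^2}^2)$, the first two terms dissipative and the third $O(\Re^{-1})$ since $u$ is smooth. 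The boundary term is the main obstacle, and it is controlled via the auxiliary quantity $\sigma := 2(D(u^\nu)\hat n)\cdot\hat\tau + \tfrac{\alp}{2}\,u^\nu\cdot\hat\tau$, which by the vorticity identity \eqref{vortident} equals $\sigma = \omega^\nu + (\tfrac{\alp}{2}-2\kappa)(u^\nu\cdot\hat\tau)$ on $\partial\Omega$ and is thus uniformly bounded in $\Re$. Substituting $2(D(u^\nu)\hat n)\cdot\hat\tau = \sigma-\tfrac{\alp}{2}u^\nu\cdot\hat\tau$ and decomposing $u^\nu\cdot\hat\tau = w\cdot\hat\tau + u\cdot\hat\tau$, the boundary integral splits into a dissipative piece $-\tfrac{\alp}{2\Re}\|w\cdot\hat\tau\|_{L^2(\partial\Omega)}^2$ together with terms of size $O(\Re^{-1})$, using the uniform bounds on $\sigma$, $u$, and $w$ (the latter by uniform $L^\infty$ control). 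Grönwall applied to $\tfrac{d}{dt}\|w\|_{L^2}^2 \lesssim \|w\|_{L^2}^2 + \Re^{-1}$ with $w(0)=0$ then delivers $\|w\|_{L^\infty_t L^2_x} = O(\Re^{-1/2})$.

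For the friction factor I split $\hat n\cdot\nabla u^\nu$ on $\partial\Omega$ into tangential and normal projections. The tangential part $\partial_{\hat n}u^\nu\cdot\hat\tau = \omega^\nu - \kappa(u^\nu\cdot\hat\tau)$ is uniformly bounded pointwise by the vorticity estimate. The normal part $\partial_{\hat n}u^\nu\cdot\hat n = -\partial_{\hat\tau}u^\nu\cdot\hat\tau$ (from $\nabla\cdot u^\nu=0$ combined with $u^\nu\cdot\hat n=0$) is not pointwise uniformly bounded; however, integrating the identity $\partial_{\hat\tau}(u^\nu\cdot\hat\tau\,\hat n) = (\partial_{\hat\tau}u^\nu\cdot\hat\tau)\hat n - \kappa(u^\nu\cdot\hat\tau)\hat\tau$ (which uses the Frenet relations $\partial_{\hat\tau}\hat\tau=\kappa\hat n$, $\partial_{\hat\tau}\hat n=-\kappa\hat\tau$ along with $u^\nu\cdot\hat n=0$) around the closed curve $\partial\Omega$ reduces the normal contribution to $\int_{\partial\Omega}\kappa(u^\nu\cdot\hat\tau)\hat\tau\,\rmd S$, which is uniformly bounded. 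Hence $|\langle f\rangle| = O(\Re^{-1})$. Finally, the uniform $L^\infty$ bound on $\omega^\nu$ together with Biot--Savart immediately yields $\|\nabla u^\nu\|_{L^2(\Omega)}\le C$ uniformly in $\Re$, so $\langle\varepsilon^\nu\rangle = O(\Re^{-1})$. The only truly delicate step is the boundary-term estimate in the relative energy argument, whose resolution depends entirely on the Stokes--Einstein-induced uniform vorticity bound.
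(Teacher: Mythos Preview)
Your argument is correct, but your route for the relative energy estimate (Step~1) differs from the paper's in a way worth noting. The paper does \emph{not} invoke Lemma~\ref{lem1} in Step~1; instead it exploits the dynamic boundary condition \eqref{NSf} directly, writing $u^\nu\cdot\hat\tau=-\frac{\St}{\alp\Re}(\partial_t+\Wi^{-1})\sigma$ and thereby converting the dangerous boundary integral into a total time derivative $-\frac{\St}{\alp\Re^2}\frac{d}{dt}\int_{\partial\Omega}|\sigma|^2\,\rmd S$ plus sign-favorable terms. This yields an augmented relative-energy functional $\|w\|_{L^2}^2+\frac{\St}{\alp\Re^2}\int_{\partial\Omega}|\sigma|^2\,\rmd S$ satisfying a Gr\"onwall inequality with remainder $O(\Re^{-1})$, where the key ratio $\alp\Re\Wi/\St$ enters only through the cross-term \eqref{crossbnd1}. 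Your approach instead uses the uniform $L^\infty$ vorticity bound from Lemma~\ref{lem1} to bound $\sigma=\omega^\nu+(\tfrac{\alp}{2}-2\kappa)(u^\nu\cdot\hat\tau)$ pointwise on $\partial\Omega$, which immediately makes the boundary term $O(\Re^{-1})$. This is more elementary and arguably cleaner, but it is intrinsically two-dimensional: the paper's structure-based argument extends to $d\ge 3$ on the lifespan of strong solutions (as remarked after Theorem~\ref{IL}), whereas yours would not, since no analogue of Lemma~\ref{lem1} is available there.

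For Steps~2 and~3 your arguments are essentially the paper's, expressed slightly differently. One minor point: your Frenet relation $\partial_{\hat\tau}\hat n=-\kappa\hat\tau$ carries the opposite sign from the paper's convention $\kappa=\hat\tau\cdot\nabla\hat n\cdot\hat\tau$ (see \eqref{curvature}), so the curvature terms in your identities for $\partial_{\hat n}u^\nu\cdot\hat\tau$ and the normal component should have their signs flipped. This is inconsequential for the conclusion, since the integration of a total tangential derivative around the closed curve $\partial\Omega$ vanishes regardless and the residual $\int_{\partial\Omega}\kappa(u^\nu\cdot\hat\tau)\hat\tau\,\rmd S$ is uniformly bounded either way.
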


\begin{rem}[Scaling Limits]\label{restinvlim}

The Navier-Stokes -- End-Functionalized polymer system has four non-dimensional parameters, $\Re$, $\Wi$, $\alp$ and $\St$.   Our argument below shows that the key dimensionless quantities for passage to Euler in the inviscid limit and obtaining drag reduction are the following two ratios
\be\label{crit}
\alp= \frac{L}{R}, \qquad \frac{\alp \Re\Wi}{\St} =\alp \frac{\mu_p}{ \mu_s}.
\ee
Recall that
$\mu_s=\rho \nu$ is the dynamic solvent viscosity,  $\mu_p=N_P  \lambda k_B \ol{T}$ is the polymer viscosity, $\lambda=\zeta R^2 / 4 H k_B \ol{T}$ is the polymer relaxation time and $\zeta = 6 \pi \rho \nu a$ is the bead friction coefficient.  If the quantities \eqref{crit} behave well, say they are $O(\Re^\gamma)$ for some $\gamma<1$, then an inspection of our proof shows that the high-Reynolds number limit holds as $\Re\to \infty$, albeit with a slower rate of $\Re^{(\gamma-1)/2}$.

High Reynolds numbers can be achieved in practice either by taking viscosity small, taking the characteristic velocity $V$ large, taking large characteristic scales $L$, or some combination thereof.   Thinking of applications such as pipe of channel flow, one might think of $L$ as fixed\footnote{The pipes may be long in extend, but turbulent scales are set by the cross-sectional width which is not necessarily large.} and vary Reynolds number be either reducing the viscosity of the solvent of driving the fluid faster through the pipe by increasing the pressure head.

Let us analyze a few situations of varying Reynolds number $\Re$, paying attention to the ratio \eqref{crit}.
\begin{enumerate}
 \item  Perhaps the most practical of the potential limits is to hold $\nu$ and $L$ fixed and vary $V$.  In this case,
\begin{align}
\alp, \frac{\alp \Re\Wi}{\St} = O_V(1) = O({\Re}^{0}) \qquad \text{with} \qquad  \nu, L \ \ \text{fixed},
\end{align}
since neither $\alp$ nor ${\mu_p}/{ \mu_s}$ depend at all on the characteristic velocity $V$.
 \item If $L$ and $U$ are held fixed and $\nu$ is varied, recalling the Stokes--Einstein relation $\zeta = 6 \pi \rho \nu a$ we find ${\mu_p}/{ \mu_s}$ is independent of viscosity $\nu$. Consequently, 
\begin{align}
\alp,\frac{\alp \Re\Wi}{\St} = O_\nu(1) = O({\Re}^{0}) \qquad \text{with} \qquad  V, L \ \ \text{fixed}.
\end{align}
\item We cannot fix $V$ and $\nu$, and take $L$ large to increase Reynolds number.  This would result in $\alp=O(\Re)$ while the ratio ${\mu_p/}{ \mu_s}$ remains fixed, which is critical for our argument. 
\end{enumerate}
However, as remarked in Footnote 4, these limits should physically be interpreted as intermediate asymptotics.  In particular, decreasing viscosity will decrease the viscous sublayer of the flow near the wall, which is order $O(\nu)$.  Our tacit assumption is that the typical polymer length should be smaller than the gradient length of the flow which, near the wall, should be on the order of the sublayer.  Therefore, varying $\nu$ and keeping $R$ fixed is liable to break down when $R$ and the sublayer become of comparable sizes. 

 In order to maintain our effective continuum model description, one might consider performing a sequence of experiments where $R$ is decreased together with $\nu$ as $R=O({\Re}^{-\gamma})$ for $\gamma\in [0,1]$, while maintaining a sufficiently dense coating.  This requires, in particular, that the number density be taken of the order $N_P \sim R^{-(d-1)}$ where $d$ is the spatial dimension so that the continuous carpet approximation and mushroom regime remain valid.  For consistency,  since polymer length-scale itself is shrinking, the effective bead scale $a$ should be taken of order $O(R^\beta)$ for some $\beta\geq 1$. In that case, $\alp=L/R= O(\Re^{\gamma})$ and if $R$ is taken $O(\nu)$, then the ratio \eqref{crit} is order
\begin{align}
\alp = \Re^{\gamma},\quad \frac{\alp \Re\Wi}{\St} =  O({\Re}^{(d-2-\beta)\gamma}) \qquad \text{with} \qquad   V, L \ \ \text{fixed}.
\end{align}
Thus, provided that $\beta>0$ and $\gamma<1$, we again obtain inviscid limit while maintaining our continuum description for all viscosity.  The borderline case $\gamma=1$ is exactly parallel to the critical Navier-slip boundary conditions, see discussion in \cite{DN18wl}.  

In summary, taking the limit $\Re\to \infty$ either by modifying the viscosities of the fluids or their characteristic speeds, our Theorem \ref{IL} says that $u^\nu\to u$ the strong Euler solution and the wall-drag/ dissipation vanishes, at least in the regime of applicability of our macroscopic model.
\end{rem}

\begin{rem}
The conclusions of Theorem \ref{IL} extend in a straightforward manner for dimensions $d\geq 3$ on any time interval over which strong solutions $u^\nu$ of the Navier-Stokes--end-functionalized polymer system and strong Euler solutions $u$ exist.  Moreover, the initial conditions and forces need not be taken identical, strong convergence in $L^2$ suffices to pass to Euler in the inviscid limit.
\end{rem}

\begin{proof}
\noindent  \textbf{Step 1: Convergence to Euler.} 
Let $w=u^\nu-u$ be the difference of the two solutions.  Then
\begin{align*}
\partial_t w + w\cdot \nabla u + u^\nu \cdot \nabla w  &=-\nabla q+  \frac{1}{\Re} \Delta  u^\nu  \quad \qquad\quad  \text{in} \quad   \Omega \times (0,T),\\
\nabla \cdot w  &= 0 \quad\quad\quad\quad \quad\quad\quad\quad\quad\quad\text{in} \quad   \Omega \times (0,T),\\
w \cdot \hat{n} &= 0 \quad\quad\quad\quad \quad\quad\quad\quad\quad\quad\text{on} \quad  \partial \Omega \times (0,T),\\
w|_{t=0} &= 0   \quad \qquad \quad \qquad\qquad\quad\ \ \ \  \text{on} \quad   \Omega \times \{t=0\}.
\end{align*}
The energy in the difference field satisfies
\begin{align}
\partial_t\left(\frac{1}{2} |w|^2\right) + w\cdot \nabla u\cdot w+   \nabla\cdot  \left(\frac{1}{2} |w|^2u^\nu + q w\right) &=  \frac{1}{\Re} w\cdot\Delta  u^\nu.
\end{align}
Integrating and using the boundary conditions $u^\nu \cdot \hat{n} $ and $w \cdot \hat{n}$, we find
\begin{align}
\frac{1}{2}\frac{\rmd}{\rmd t} \|w\|_{L^2(\Omega)}^2 &\leq   \|\nabla u\|_{L^\infty(\Omega)} \|w\|_{L^2(\Omega)}^2 +     \frac{1}{\Re} \int_\Omega w\cdot\Delta  u^\nu dx.
\end{align}
Now first note that
\begin{align} \nonumber
 \int_\Omega w \cdot \Delta  u^\nu dx 
     &=   -   \|\nabla u^\nu\|_{L^2(\Omega)}^2   +  \int_\Omega \nabla u :\nabla   u^\nu dx +  \int_{\partial \Omega} w\cdot (\hat{n}\cdot \nabla) u^\nu \rmd S\\
     &\leq  - \frac{1}{2}  \|\nabla u^\nu\|_{L^2(\Omega)}^2 + \frac{1}{2 }  \|\nabla u\|_{L^2(\Omega)}^2 +   \int_{\partial \Omega} w\cdot (\hat{n}\cdot \nabla) u^\nu \rmd S.
\end{align}
Now note that for any tangential vector field to the boundary $v$ satisfying $v\cdot \hat{n}=0$ we have
\begin{align} \nonumber
\int_{\partial \Omega} v\cdot (\hat{n}\cdot \nabla) u^\nu \rmd S&= \int_{\partial \Omega} (v\cdot \hat{\tau}) ((\hat{n}\cdot \nabla) u^\nu\cdot \hat{\tau}) \rmd S 
\\ \nonumber
&=  \int_{\partial \Omega} (v\cdot \hat{\tau}) (2(D(u^\nu)n)\cdot \hat{\tau}) \rmd S-   \int_{\partial \Omega} (v\cdot \hat{\tau})  (\hat{\tau}\cdot \nabla n) \cdot u^\nu \rmd S\\ 
&= \int_{\partial \Omega} (v\cdot \hat{\tau}) (2(D(u^\nu)n)\cdot \hat{\tau}) \rmd S-  \int_{\partial \Omega} 2\kappa(v\cdot \hat{\tau})  ( u^\nu \cdot \hat{\tau}) \rmd S
\end{align}
where $\kappa=\hat \tau\cdot \nabla \hat n \cdot \hat \tau$ is the boundary curvature.
Combining with the boundary condition on Navier-Stokes
\be
u^\nu \cdot \hat\tau =- \frac{\St}{\alp\Re} \left(\partial_t  +\frac{1}{\Wi}\right) \left (2(D(u^\nu)  \hat{n})\cdot \hat\tau +\frac{\alp}{2} {u^\nu} \cdot \hat{\tau} \right ),
\ee
we have the following equality
\begin{align}\nonumber
\int_{\partial \Omega} u^\nu\cdot (\hat{n}\cdot \nabla) u^\nu \rmd S&= \int_{\partial \Omega} (u^\nu\cdot \hat{\tau}) \left(2(D(u^\nu)n)\cdot \hat{\tau}+ \frac{\alp}{2} {u^\nu} \cdot \hat{\tau} \right) \rmd S-  \int_{\partial \Omega} \left( \frac{\alp}{2}+2\kappa\right)(u^\nu\cdot \hat{\tau})^2 \rmd S\\ \nonumber
&=-\frac{\St}{\alp\Re} \frac{\rmd}{\rmd t}  \int_{\partial \Omega} |2(D(u^\nu)n)\cdot \hat{\tau}+ \frac{\alp}{2} {u^\nu} \cdot \hat{\tau}|^2 \rmd S\\
&\qquad   -\frac{\St}{\alp\Re\Wi}  \int_{\partial \Omega} |2(D(u^\nu)n)\cdot \hat{\tau}+ \frac{\alp}{2} {u^\nu} \cdot \hat{\tau}|^2 \rmd S-  \int_{\partial \Omega} \left( \frac{\alp}{2}+2\kappa\right)(u^\nu\cdot \hat{\tau})^2 \rmd S.
\end{align}
Consequently
\begin{align}\nonumber
\int_{\partial \Omega} w\cdot (\hat{n}\cdot \nabla) u^\nu \rmd S&=-\frac{\St}{\alp\Re} \frac{\rmd}{\rmd t}  \int_{\partial \Omega} |2(D(u^\nu)n)\cdot \hat{\tau}+ \frac{\alp}{2} {u^\nu} \cdot \hat{\tau}|^2 \rmd S \\ \nonumber
& -\frac{\St}{\alp\Re\Wi}  \int_{\partial \Omega} |2(D(u^\nu)n)\cdot \hat{\tau}+ \frac{\alp}{2} {u^\nu} \cdot \hat{\tau}|^2 \rmd S-  \int_{\partial \Omega} \left( \frac{\alp}{2}+2\kappa\right)(u^\nu\cdot \hat{\tau})^2 \rmd S \\ 
    & -  \int_{\partial \Omega} (u\cdot \hat{\tau}) \left(2(D(u^\nu)n)\cdot \hat{\tau}+ \frac{\alp}{2} {u^\nu} \cdot \hat{\tau}\right) \rmd S+   \int_{\partial \Omega} \left( \frac{\alp}{2}+2\kappa\right) (u\cdot \hat{\tau})  ( u^\nu \cdot \hat{\tau}) \rmd S.\label{bdryterm}
\end{align}
The Euler/Navier-Stokes cross-terms are handled  as follows.  First,
\begin{align} \nonumber
  \Bigg| \int_{\partial \Omega} &(u\cdot \hat{\tau}) \left(2(D(u^\nu)n)\cdot \hat{\tau}+ \frac{\alp}{2} {u^\nu} \cdot \hat{\tau}\right)  \rmd S\Bigg| \leq   \sqrt{ \int_{\partial \Omega} ( u \cdot \hat{\tau})^2 \rmd S \int_{\partial \Omega} |2(D(u^\nu)n)\cdot \hat{\tau}+ \frac{\alp}{2} {u^\nu} \cdot \hat{\tau}|^2 \rmd S}\\
&\qquad \leq  \frac{2\alp\Re\Wi}{\St}\int_{\partial \Omega} ( u \cdot \hat{\tau})^2 \rmd S + \frac{\St}{2\alp\Re\Wi}     \int_{\partial \Omega} |2(D(u^\nu)n)\cdot \hat{\tau}+ \frac{\alp}{2} {u^\nu} \cdot \hat{\tau}|^2 \rmd S. \label{crossbnd1}
\end{align}
The inequality \eqref{crossbnd1} allows us to hide the first cross-terms above.  As for the other cross-term, we note first that if $\alp > 4  \max_{x\in \partial \Omega}  \kappa$ (which is consistent with our assumption $(A_3)$), then this term is negative and can be dropped.  Otherwise, more generally we assume ${\alp}\neq 4\kappa$ and we have
\begin{align} \nonumber
 \left|\int_{\partial \Omega}\left( \frac{\alp}{2}+2\kappa\right) ( u^\nu \cdot \hat{\tau})( u \cdot \hat{\tau}) \rmd S \right|&\leq\frac{1}{2} \|{\alp}/{2}+2\kappa\|_{L^\infty(\partial\Omega)} \int_{\partial \Omega}  ( u^\nu \cdot \hat{\tau})^2 \rmd S\\
 &\qquad +   \frac{1}{2}\|{\alp}/{2}+2\kappa\|_{L^\infty(\partial\Omega)}\int_{\partial \Omega}  ( u \cdot \hat{\tau})^2 \rmd S .
\end{align}
We estimate the boundary term by trace inequality and embedding as follows
\begin{align*}
\int_{\partial \Omega}  ( u^\nu \cdot \hat{\tau})^2 \rmd S 
 \leq 4\|{\alp}/{2}+2\kappa\|_{L^\infty(\partial\Omega)} \|u^\nu\|_{L^2(\Omega)}^2  +\frac{ \|\nabla u^\nu\|_{L^2(\Omega)}^2}{4\|{\alp}/{2}+2\kappa\|_{L^\infty(\partial\Omega)} }.
\end{align*}
Thus, putting this together with \eqref{bdryterm} and \eqref{crossbnd1}  we find
\begin{align*}
\int_{\partial \Omega} w\cdot (\hat{n}\cdot \nabla) u^\nu \rmd S&\leq -\frac{\St}{\alp\Re} \frac{\rmd}{\rmd t}  \int_{\partial \Omega} |2(D(u^\nu)n)\cdot \hat{\tau}+ \frac{\alp}{2} {u^\nu} \cdot \hat{\tau}|^2 \rmd S \\ \nonumber
&\qquad -\frac{\St}{2\alp\Re\Wi}  \int_{\partial \Omega} |2(D(u^\nu)n)\cdot \hat{\tau}+ \frac{\alp}{2} {u^\nu} \cdot \hat{\tau}|^2 \rmd S\\ \nonumber
 &\qquad    +2\|{\alp}/{2}+2\kappa\|_{L^\infty(\partial\Omega)}^2 \|u^\nu\|_{L^2(\Omega)}^2+ \frac{1}{4} \|\nabla u^\nu\|_{L^2(\Omega)}^2
 \\
 &\qquad +  \left( \frac{1}{2}\|{\alp}/{2}+2\kappa\|_{L^\infty(\partial\Omega)} +\frac{2\alp\Re\Wi}{\St}\right)\int_{\partial \Omega} ( u \cdot \hat{\tau})^2 \rmd S.
\end{align*}
Finally, we obtain the following relative energy inequality
\begin{align} \nonumber
\frac{1}{2}\frac{\rmd}{\rmd t}&\left( \|w(t)\|_{L^2(\Omega)}^2 + \frac{\St}{\alp\Re^2}\int_{\partial \Omega} |2(D (u^\nu)\hat{n}) \cdot \hat\tau|^2 \rmd S \right) + \frac{1}{4\Re}  \|\nabla u^\nu\|_{L^2(\Omega)}^2   \\ \nonumber
 &\qquad \qquad + \frac{ \St}{2 \alp \Re ^2 \Wi } \int_{\partial \Omega} |2 (D(u^\nu ) \hat{n} ) \cdot \hat{\tau } |^2 dS \leq    \|\nabla u\|_{L^\infty(\Omega)} \|w(t)\|_{L^2(\Omega)}^2   + \frac{\mathcal{E}(t)}{\Re} ,\\
&\qquad\qquad\qquad\hspace{29mm}  \|w(0)\|_{L^2(\Omega)}^2= 0 \label{ineqVVlim}
\end{align}
where 
\begin{align} \nonumber
\mathcal{E}(t) &:=   \frac{1}{2 }  \|\nabla u\|_{L^2(\Omega)}^2 +2\|{\alp}/{2}+2\kappa\|_{L^\infty(\partial\Omega)}^2 \|u^\nu\|_{L^2(\Omega)}^2\\
&\qquad +   \left( \frac{1}{2}\|{\alp}/{2}+2\kappa\|_{L^\infty(\partial\Omega)} +\frac{2\alp\Re\Wi}{\St}\right)\int_{\partial \Omega} ( u \cdot \hat{\tau})^2 \rmd S.
\end{align}
Recalling Lemma \ref{lem0} for the bound on kinetic energy and working in the settings of (1) or (2) detailed in Remark \ref{restinvlim}, we have $\frac{\alp \Re\Wi}{\St} = O(\Re^0)$ and $\alpha = O(\Re^0)$ and thus
\be
\sup_{t\in [0,T]} \mathcal{E}(t)= O_{\Re}(1).
\ee
Integrating the above,  using Gr\"{o}nwall's inequality and the fact that $A>0$ we find for any $T>0$
\begin{align}\label{conv}
 \sup_{t\in [0,T]} \|u^\nu(t)- u(t)\|_{L^2(\Omega)}  =O( \Re^{-1/2}).
\end{align}
  Thus, we have convergence $ u^\nu \to u$ strongly in $L^\infty(0,T;L^2(\Omega))$.
  \vspace{2mm}

  \noindent  \textbf{Step 2: Vanishing of Wall Drag.} 
  The global momentum balance for Navier-Stokes reads
  \be \label{momentumbalance}
  \frac{\rmd}{\rmd t} \int_\Omega u^\nu \rmd x = -\int_{\partial\Omega} \hat{n} p^\nu \rmd S + \frac{1}{\Re} \int_{\partial\Omega} \partial_n u^\nu \rmd S.
  \ee
  The last term is the viscosity induced wall-friction, which we aim to show vanishes.  Indeed, using the divergence-free condition $\nabla\cdot u^\nu=0$ we have
  \be\label{boundarydivergence}
  \hat{n} \cdot \partial_n u^\nu|_{\partial\Omega} =  - \hat{\tau} \cdot \partial_\tau u^\nu|_{\partial\Omega}.
  \ee
  To see this, extend $\hat{n}(x)$ and $\hat{\tau}(x)$ smoothly into a tubular neighborhood of $\partial\Omega$ and such that they remain an orthonormal basis of $\mathbb{R}^2$.  Then expressing $\nabla = \hat{n} \partial_n + \hat{\tau}\partial_\tau$, forming $\nabla \cdot u =\hat{n} \partial_n u + \hat{\tau}\partial_\tau u$ and tracing on the boundary $\partial\Omega$ (recalling that $u\in L^\infty(0,T;H^2(\Omega))$, so that the trace makes sense), we obtain \eqref{boundarydivergence}.  Recalling also the identity \eqref{vortident} for vorticity along the walls
    \be
 \omega^\nu|_{\partial \Omega} = 2(D (u^\nu)\hat{n}) \cdot \hat\tau|_{\partial \Omega} + 2\kappa (u^\nu\cdot \hat{\tau})|_{\partial \Omega},
\ee
and returning to the wall-friction in \eqref{momentumbalance}, we have
  \begin{align} \nonumber
  \frac{1}{\Re} \int_{\partial\Omega} \partial_n u^\nu  \rmd S&=  \frac{1}{\Re} \int_{\partial\Omega} \hat{n} \cdot \partial_n u^\nu\ \hat{n}   \rmd S+  \frac{1}{\Re} \int_{\partial\Omega} \hat{\tau} \cdot \partial_n u^\nu \ \hat{\tau}\rmd S\\  \nonumber
  &=  -\frac{1}{\Re} \int_{\partial\Omega} \hat{\tau} \cdot \partial_\tau u^\nu\ \hat{n}   \rmd S+  \frac{1}{\Re} \int_{\partial\Omega} 2(D (u^\nu)\hat{n}) \cdot \hat\tau \ \hat{\tau}\rmd S-  \frac{1}{\Re} \int_{\partial\Omega}  \hat{n} \cdot \partial_\tau u^\nu \ \hat{\tau}\rmd S\\
  &= \frac{1}{\Re} \int_{\partial\Omega} (u^\nu \cdot \hat{\tau} ) \left[ \hat{\tau}\cdot \partial_\tau (\hat{\tau} \otimes  \hat{n}+\hat{n}\otimes \hat{\tau})  \right]\rmd S+ \frac{1}{\Re} \int_{\partial\Omega} 2(D (u^\nu)\hat{n}) \cdot \hat\tau \ \hat{\tau}\rmd S \label{fricexp}
  \end{align}
  Then, by trace theorem and the energy equality \eqref{energy}, we find for some $C:=C(\Omega,T, \frac{2 \alp\Re \Wi}{ \St})$ 
  \begin{align}\nonumber
      \frac{1}{\Re} \left|\int_0^T \int_{\partial\Omega} \partial_n u^\nu  \rmd S\rmd t\right| &\leq       \frac{C}{\Re}\| u^\nu\|_{L^\infty(0,T;L^2(\Omega))} \| \nabla u^\nu(t)\|_{L^2(0,T;L^2(\Omega))} + \frac{C}{\Re} \|(2D(u^\nu)\hat{n})\cdot \hat{\tau}\|_{L^2(0,T;L^2(\partial\Omega))}\\
      &\leq \frac{\Re \alp \Wi}{\St}\times \frac{C}{\Re} =O(\Re^{-1}),
  \end{align}
  where we used the bound \eqref{energybnd} and \eqref{ineqVVlim}.
  Note that the $L^\infty(0,T;L^2(\Omega))$ convergence established above implies that the pressure integrals must likewise converge.  Consequently,  the limiting global momentum balance
  reads: for any $0\leq t'\leq t\leq T$ 
  \be
  \int_\Omega u(t)\rmd x =   \int_\Omega u(t')\rmd x -\int_{t'}^t  \int_{\partial\Omega}  \hat{n} p(s)  \rmd S\rmd s.
  \ee
    \vspace{2mm}

\noindent  \textbf{Step 3: Vanishing of Energy Dissipation.} Finally note, directly from \eqref{conv} and \eqref{ineqVVlim} upon integration, 
\be\label{dissrate}
\frac{1}{\Re} \int_0^T \int_\Omega |\nabla u^\nu(x,t)|^2 dxd t\leq \frac{C(\frac{\alp \Re\Wi}{\St}, u_0, \Omega)}{\Re} .
\ee
This bound would hold also in higher dimensions, provided smooth Navier-Stokes-End-Functionalized polymer solution and Euler solutions exists on the a common time interval.
In two dimensions, the result follows again directly from the apriori bound on vorticity found in Lemma \ref{lem1}.  Specifically, using \eqref{BSbd} we have
\be
\frac{1}{\Re} \int_0^T \int_\Omega |\nabla u^\nu(x,t)|^2 dxd t \lesssim \frac{1}{\Re} \int_0^T \int_\Omega |\omega ^\nu(x,t)|^2 dxd t \leq  \frac{C(\frac{\alp \Re\Wi}{\St}, u_0, \Omega)}{\Re}.
\ee
  \end{proof}

\section{Discussion}

In this paper, we introduced a new boundary
condition for Navier-Stokes equations serves as a rational model for the situation where polymers are attached by one end (grafted or strongly adsorbed)
 to the wall in the ``mushroom regime" in which the polymers do not interact. This boundary condition was derived from a simple kinetic theory model for the polymer evolution on the boundary and
 a fluid-polymer stress balance.  Moreover, it closes in the macroscopic fluid variables and becomes an evolution equation for the tangential stresses on the solid walls. We established  global well-posedness for the resulting system in two spatial
dimensions and show that it captures the drag reduction effect in the sense that the vanishing viscosity
limit holds with a rate. Consequently, we obtain bounds on energy dissipation rate which qualitatively with observations of laminar drag reduction.

There are many further questions that are natural to ask.  These include, for example, the behavior in this system in higher dimensions, propagation of
higher regularity, and the resulting system for non-Hookean polymers (for example, for polymers modeled by the FENE potentials).   
 Another interesting direction of research, both scientifically and mathematically, concerns the validation, generalization and improvement of our assumptions. See Remark \ref{validity} for an extended discussion.  Perhaps the most interesting such generalization is to consider what occurs in  ``polymer brush"  regime in which the polymers are spaced close together on the boundary and may strongly interact with each other \cite{NA03,NA01}.  It is unclear to us whether or not a fully macroscopic description for this regime will be possible.  If not, a coupled microscopic-macroscopic system must be studied to understand the behavior in this regime.

\subsection*{Acknowledgments}  We are grateful to Peter Constantin, Gregory L. Eyink, Huy Q. Nguyen, Toan T. Nguyen, Chanwoo Kim, and Vlad Vicol for comments and discussions.
We would like to especially thank Ismail Hameduddin, Antonio Perazzo and  Tamer Zaki  for enlightening discussions on polymer physics. 
Research of TD is partially supported by NSF-DMS grant 1703997. Research of JL is partially supported by Samsung scholarship.
 
 \appendix
 
\section{Well-posedness theory of parabolic equations}

We recall some standard results on parabolic equations that we have used. Consider the problem 
\begin{align}  \label{heateqb}
\partial_t u + v \cdot \nabla u - \nu \Delta u& = f \quad \quad \text{in} \quad \Omega \times [0, T], \\ 
u &=0 \quad \quad \text{on} \quad \partial \Omega \times [0, T], \\
u|_{t=0} &= u_0 \quad \quad \!\! \text{on} \quad \Omega\times \{t=0\}, \label{heateq}
\end{align}
where $v \in C([0, T] ; C(\Omega) )$ with $\mathrm{div} \, v = 0$, and $\Omega$ is bounded with $C^2$ boundary. If $f \in L^2 (\Omega \times [0, T])$ and $u_0 \in H_0 ^1 (\Omega)$, then there is a unique solution of \eqref{heateqb}--(\ref{heateq}) satisfying
\begin{align} \nonumber
u &\in C ([0, T] ; H_0 ^1 (\Omega) ) \cap L^2 (0, T; H^2 (\Omega) \cap H_0 ^1 (\Omega ) ), \\ \nonumber
\partial_t u &\in L^2 (0, T; L^2 (\Omega) ).
\end{align}
For $v=0$ one can find this from Lions and Magenes \cite{LM12}  or Brezis \cite{Brezis}. For general $v$, one may follow the standard argument summarized below; for a full argument (see \cite{LM12} or \cite{C90}).
\begin{lemma}[Lions Projection Lemma] \label{LemmaLions}
Let $H$ be a Hilbert space and $\Phi \subset H$ a dense space. \\ Let $a: H \times \Phi \rightarrow \mathbb{R}$ be a bilinear form with the following two properties:
\begin{enumerate}
\item for all $\phi \in \Phi$, the linear form $u \rightarrow a(u, \phi)$ is continuous on $H$,
\item there is $\alpha > 0$ such that
\begin{align} \nonumber
a(\phi, \phi) \ge \alpha \| \phi \|_{H} ^2 \quad \quad \text{for all} \quad \phi \in \Phi.
\end{align}
\end{enumerate}
Then, for each continuous linear form $f \in H'$, there exists $u \in H$ such that 
\begin{align} \nonumber
\| u \|_{H} \le \frac{1}{\alpha} \| f\|_{H'}
\end{align}
and 
\begin{align} \nonumber
a(u, \phi) = \langle f, \phi \rangle \quad \quad \text{for all} \quad \phi \in \Phi.
\end{align}
\end{lemma}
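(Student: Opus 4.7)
The plan is to apply the Riesz representation theorem together with a Hahn--Banach extension, exploiting the asymmetry built into the hypotheses: continuity is posited only in the first slot (on all of $H$) and coercivity only on the dense subspace $\Phi$. Lax--Milgram is therefore not directly available, since nothing guarantees that $a$ extends as a bounded bilinear form on $H\times H$.

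First, for each fixed $\phi\in\Phi$, hypothesis (1) says $u\mapsto a(u,\phi)$ defines an element of $H'$, so by Riesz there exists a unique $A\phi\in H$ with
\[
a(u,\phi) = (u,A\phi)_H \qquad \text{for all } u\in H.
\]
The map $A:\Phi\to H$ is linear. Hypothesis (2) gives $(A\phi,\phi)_H = a(\phi,\phi)\ge \alpha\|\phi\|_H^2$, and Cauchy--Schwarz then yields the crucial lower bound $\|A\phi\|_H \ge \alpha\|\phi\|_H$; in particular $A$ is injective.

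Next, define $\ell:A(\Phi)\to\mathbb{R}$ by $\ell(A\phi):=\langle f,\phi\rangle$. This is well-defined by injectivity of $A$, and the bound on $\|A\phi\|$ yields
\[
|\ell(A\phi)| = |\langle f,\phi\rangle| \le \|f\|_{H'}\|\phi\|_H \le \tfrac{1}{\alpha}\|f\|_{H'}\|A\phi\|_H,
\]
so $\ell$ is continuous on $A(\Phi)$ with norm at most $\|f\|_{H'}/\alpha$. By Hahn--Banach (or, equivalently, by continuous extension to the closed subspace $\overline{A(\Phi)}\subset H$ followed by a direct Riesz argument there), $\ell$ extends to a bounded linear functional on all of $H$ with the same norm bound. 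Applying Riesz once more, there exists $u\in H$ with $\|u\|_H\le \|f\|_{H'}/\alpha$ and $\ell(v)=(u,v)_H$ for every $v\in H$. Then, for every $\phi\in\Phi$,
\[
\langle f,\phi\rangle = \ell(A\phi) = (u,A\phi)_H = a(u,\phi),
\]
which is the required identity.

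The only genuinely subtle point is the asymmetric structure of the data: because $a$ need not be continuous in its second argument on all of $H$, one cannot define a representing operator in the usual symmetric way, and density of $\Phi$ alone does not allow extension of $a$ to $H\times H$ without additional continuity. The trick is to recognize that the bounded-below map $A:\Phi\to H$ built from Riesz in the first slot acts as an effective ``adjoint,'' converting the coercivity hypothesis into a quantitative lower bound that makes the functional $\ell$ manifestly continuous on its (possibly proper) domain $A(\Phi)$. Once this observation is in place, the rest is routine Hilbert-space machinery, and the density hypothesis on $\Phi$ plays no role in existence -- it is only needed in typical applications to interpret the resulting identity as a weak formulation tested against a rich enough class of functions.
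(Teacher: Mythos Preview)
Your proof is correct and follows the standard Hahn--Banach/Riesz route for the Lions projection lemma. The paper itself does not supply a proof of this lemma at all: it is stated in the appendix as a recalled result with references to Lions--Magenes and Costabel for the full argument, and is then applied to the parabolic problem. So there is no paper proof to compare against; your argument simply fills in what the paper leaves to the literature. Your closing observation that density of $\Phi$ is not actually used in the existence argument is also accurate and worth noting.
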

\noindent To solve the system \eqref{heateqb}--(\ref{heateq}), we set 
\begin{align*}
H &= L^2 (0, T; H_0 ^1 (\Omega)),\\
\Phi &= \{ \phi = v|_{(0, T) \times \Omega } \ | \ v \in C_0 ^\infty ( (-\infty, T) \times \Omega ) \},\\
a(u, \phi) &= \int_{(0, T) \times \Omega } \left(\nabla u \cdot \nabla \phi - u \partial_t \phi - u v \cdot \nabla \phi \right) dxdt.
\end{align*}
Then,  Lemma \ref{LemmaLions} implies existence of solution of \eqref{heateqb}--(\ref{heateq}) in the weak sense and, together with
\begin{align} \nonumber
\int_{(0, T) \times \Omega}\left(\partial_t u v + u \partial_t v \right) dxdt  = \int_{\Omega \times \{t=T\} } uv dx - \int_{\Omega \times \{t=0\} } uv dx
\end{align}
and a standard density argument gives uniqueness. Finally, higher regularity follows from $v=0$ case with $f$ replaced by $f - v \cdot \nabla u \in L^2 (\Omega \times [0, T])$.

\section{Derivation of Kramers expression for polymer stresses}\label{AppendKramer}

Due to its central nature to our work,  we here provide a short derivation of Kramers expression (Eqn \eqref{Kramers}) for the polymer stresses for completeness.  The derivation is standard and can be found, for example in the textbook of Ottinger \cite{Ottinger12} on pages 158--159. 
We will calculate here only the components $ (-\hat{n}) \cdot \Sigma_P$, which are the force acting on the fluid in the direction normal to the wall.  This is the only component of the stress tensor used in our physical derivation and it has the most intuitive interpretation. 

First note that, within the bead-spring approximation, a polymer can exert force on a fluid parcel if and only if its end bead is contained in that fluid parcel.  Thus, we set up a cut-off between polymer layer and fluid parcel. In other words, we imagine a tubular neighborhood along the wall of size $\ell$. The thickness (in the wall-normal direction) of the near-wall fluid parcel acted upon by the polymer has characteristic size on the order of $r$, the maximal extent of the polymer defined in assumption $(A_4)$. Its length-scale in the wall-tangential direction is taken larger than that of the typical polymers.  As a consequence, the bead does not belong to the fluid parcel only if $ (-\hat{n}) \cdot m < \ell$. 
The thickness scale is justified since we are interested in the fluid parcel directly communicating with polymer. 
Let $(-\hat{n}) \cdot \Sigma_P^{\ell}$ be the (spring) force per surface, divided by solvent density. This is the force that polymers exert on the near-wall fluid parcel sitting at distance $\ell$ uniformly from the wall. Fixing $\ell$, this force is mathematically expressed as
\be
(-\hat{n}) \cdot \Sigma_P ^{\ell} = r \int_{M(x)} \chi_{\{ (-\hat{n}) \cdot m \ge \ell\}}(m)   \frac{k_B \overline{T}}{\rho} \nabla_m U(m) f_L(m) dm.
\ee
However, we note the following: there is no obvious choice for cut-off distance $\ell$ for polymer layer and fluid particles. Thus, to obtain the cumulative force $(-\hat{n}) \cdot \Sigma_L$, we average over possible scales $\ell$ and obtain
\be
(-\hat{n}) \cdot \Sigma_P = \frac{1}{r} \int_0 ^r (-\hat{n}) \cdot \Sigma_P ^{\ell} d \ell.
\ee
In the case of the Hookean dumbbell model for which $r = \infty$  which can be understood in suitable limiting sense. We do not detail this here. Therefore,
\begin{align} \nonumber
(-\hat{n}) \cdot \Sigma_P &= \frac{k_B \overline{T}}{\rho}  \int_0 ^r  \int_{M(x)} \chi_{\{ (-\hat{n}) \cdot m \ge \ell\}} \nabla_m U f_L(m) dm d \ell \\
&=  \frac{k_B \overline{T}}{\rho}   \int_{M(x)} \int_0 ^{ (-\hat{n}) \cdot m}  \nabla_m U f_L(m) d \ell dm = \frac{k_B \overline{T}}{\rho}  \int_{M(x)} (-\hat{n}) \cdot m \nabla_m U f_Pdm.
\end{align}
We thereby recover the Kramer formula \eqref{Kramers} for the normal component of polymer stress along the wall.


\end{document}